\newif\ifarxiv
\journal{Journal of Symbolic Computation}
\setlist{noitemsep}
\newtheorem{lemma}{Lemma}
\newtheorem{proposition}[lemma]{Proposition}
\newtheorem{theorem}[lemma]{Theorem}
\newtheorem{corollary}[lemma]{Corollary}
\theoremstyle{definition}
\newtheorem{definition}[lemma]{Definition}
\newtheorem{problem}[lemma]{Problem}
\newtheorem{conjecture}[lemma]{Conjecture}
\theoremstyle{remark}
\newtheorem{remark}[lemma]{Remark}
\newtheorem{example}[lemma]{Example}
\newcommand{\ol}{\overline}
\newcommand{\Kb}{\mathbb{K}}
\newcommand{\Nb}{\mathbb{N}}
\newcommand{\Rb}{\mathbb{R}}
\newcommand{\Zb}{\mathbb{Z}}
\newcommand{\Fbf}{\mathbf{F}}
\newcommand{\Sbf}{\mathbf{S}}
\newcommand{\Acal}{\mathcal{A}}
\newcommand{\Bcal}{\mathcal{B}}
\newcommand{\Gcal}{\mathcal{G}}
\newcommand{\Mcal}{\mathcal{M}}
\newcommand{\Lcal}{\mathcal{L}}
\newcommand{\Fcal}{\mathcal{F}}
\newcommand{\Zker}{\ker_{\Zb}}
\newcommand{\fkc}{\mathfrak{c}}
\newcommand{\zz}{\mathbb{Z}}
\newcommand{\nn}{\mathbb{N}}
\newcommand{\qq}{\mathbb{Q}}
\newcommand{\rr}{\mathbb{R}}
\newcommand{\cala}{\mathcal{A}}
\newcommand{\calb}{\mathcal{B}}
\newcommand{\cf}{\mathcal{F}}
\newcommand{\cm}{\mathcal{M}}
\newcommand{\LL}{\mathcal{L}}
\newcommand{\rank}{\mathrm{rank} \,}
\newcommand{\ind}{\mbox{$\perp \kern-5.5pt \perp$}}
\newcommand{\mardeg}{\mathrm{mardeg}}
\newenvironment{smallpmatrix}{\left(\begin{smallmatrix}}{\end{smallmatrix}\right)}
\DeclareMathOperator{\codim}{codim}
\DeclareMathOperator{\supp}{supp}
\DeclareMathOperator{\ini}{in}
\DeclareMathOperator{\Glues}{Glues}
\DeclareMathOperator{\Lifts}{Lifts}
\DeclareMathOperator{\Quads}{Quads}
\DeclareMathOperator*{\bigtimes}{\textnormal{\Large $\times$}} 
\newcommand{\TFP}[1][\Acal]{\times_{#1}}
\newcommand{\bigTFP}[1][\Acal]{\bigtimes_{#1}}
\newcommand{\Flat}{\Fbf^{\text{\textit{lat}}}}
\newcommand{\Fclat}{\Fcal^{\text{\textit{lat}}}}
\newcommand{\Fbin}{\Fbf^{\text{\textit{in}}}}
\newcommand{\Fcin}{\Fcal^{\text{\textit{in}}}}
\newcommand{\Fblift}{\Fbf^{\text{\textit{lift}}}}
\newcommand{\Fclift}{\Fcal^{\text{\textit{lift}}}}
\begin{document}

\ifarxiv\else
\begin{frontmatter}
\fi

\title{Lifting  Markov Bases and Higher Codimension Toric Fiber Products}

\ifarxiv
\author{Johannes Rauh$^{1,2,}$\footnote{\texttt{jarauh@yorku.ca}}\ $^{,}$\footnote{Present address: York University, 4700 Keele, Toronto, ON M3J 1P3}, Seth Sullivant$^{3,}$\footnote{\texttt{smsulli2@ncsu.edu}}
  \\[5pt]\small
  $^{1}$MPI for Mathematics in the Sciences \\[-10pt]\small
              Inselstraße 22\\[-10pt]\small
              04103 Leipzig, Germany 
  \\[-3pt]\small
  $^{2}$University of Hannover\\[-10pt]\small
              Welfengarten 1\\[-10pt]\small
              30167 Hannover, Germany
              \\\small
           $^{3}$NC Statue University \\[-10pt]\small
           Box 8205 \\[-10pt]\small
           Raleigh, NC 27695 
}

\maketitle
\else
\author[MPI,Hannover]{Johannes Rauh\corref{JRc}\fnref{now}}
\ead{jarauh@yorku.ca}
\author[NCSU]{Seth Sullivant}
\ead{smsulli2@ncsu.edu}
\address[MPI]{MPI for Mathematics in the Sciences,
              Inselstraße 22,
              04103 Leipzig, Germany.
            }
\address[Hannover]{University of Hannover, Welfengarten~1, 30167 Hannover, Germany.}
\address[NCSU]{NC Statue University,
           Box 8205,
           Raleigh, NC 27695.
         }
\cortext[JRc]{Corresponding author.}
\fntext[now]{Present address: York University, Department of Mathematics and Statistics, 4700 Keele, Toronto, ON M3J 1P3, Canada }
\fi

\begin{abstract}
  We study how to lift Markov bases and Gr\"obner bases along linear maps of lattices.  We give a lifting algorithm that
  allows to compute such bases iteratively provided a certain associated semigroup is normal. 
  Our main application is the toric fiber product of toric ideals, where lifting gives Markov bases of the factor
  ideals that satisfy the compatible projection property.  We illustrate the technique 
  by computing Markov bases of various infinite families of hierarchical models.  The methodology also implies new
  finiteness results for iterated toric fiber products.

  \ifarxiv
  \medskip
  \noindent
  \textbf{Keywords:}
  {Markov bases, toric fiber product, lifting, Gr\"obner bases}
  \fi
\end{abstract}

\ifarxiv
\tableofcontents
\else
\begin{keyword}
  Markov bases \sep toric fiber product \sep lifting \sep Gr\"obner bases

  \MSC 11P21 \sep 13P10 \sep 52B20 \sep 90C10 \sep 13E15
\end{keyword}
\end{frontmatter}
\fi

\section{Introduction}
\label{sec:intro}

Let $\Bcal\in\Zb^{h\times n}$ be an integer matrix and let $\Mcal\subseteq\Zb^{n}$.  For any $b\in\Zb^{h}$ let
$\Fbf(\Bcal, b)_{\Mcal}$ be the \emph{fiber graph} with vertex set $\Fbf(\Bcal, b) = \{ v \in \Nb^{n} : \Bcal v = b \}$,
where vertices $u,v\in\Fbf(\Bcal,b)$ are connected by an edge if and only if~$u-v\in\pm\Mcal$.  Then $\Mcal$ is called a
\emph{Markov basis} if all fiber graphs are connected.  Elements of Markov bases are sometimes called \emph{moves}, since they can be used as moves in MCMC simulations to sample from~$\Fbf(\Bcal,b)$~\cite{DiaconisSturmfels1998}.
Alternatively, Markov bases consist of exponent vectors of a binomial generating set of the toric ideal~$I_{\Bcal}$ (see
Theorem~\ref{thm:MBthm}).

The best general algorithm to compute a Markov basis of a matrix is the one
implemented in \verb|4ti2|~\cite{4ti2}.  However, many matrices that appear in applications are too large, and
\verb|4ti2| cannot compute a Markov basis within a reasonable time, using a reasonable amount of memory.
In these situations, one hopes for procedures that take into account the
structure of the Markov basis problem and that can use that structure to
build a Markov basis of a large problem from Markov bases of simpler pieces and ``lifting'' operations.

In this paper we study how to lift a Markov basis along a linear map.
The lifting procedure generalizes similar prior constructions.  For example, the algorithm implemented in \verb|4ti2|
relies on lifting Markov bases along a coordinate projection~\cite{HemmeckeMalkin09:Generating_Sets_Lattice_Ideals}.
The construction used to compute a Markov basis of codimension zero toric fiber products is also an instance
of~lifting~\cite{Sullivant07:TFPs}.  Similar ideas are used in~\cite{Shibuta12:GB_contraction_ideals} to relate an ideal
with its preimage under a monomial ring homomorphism.
We study lifting in a very general context for arbitrary matrices $\Bcal$ and arbitrary linear maps~$\phi$.  The only
assumption that we have to make is that a certain affine semigroup is normal (see Section~\ref{sec:proj-GBs}).  Even if
this condition is violated, in many cases it is possible to adjust our algorithm.  An example is given in
Section~\ref{sec:K4-e}.

Our procedure allows to transform the problem of computing a
Markov basis of~$\Bcal$ into a series of smaller Markov basis computations.  The efficiency of lifting crucially depends
on the choice of the linear map.  If everything goes well, it is possible to compute complicated Markov bases of large
matrices inductively by iterating the lifting procedure.

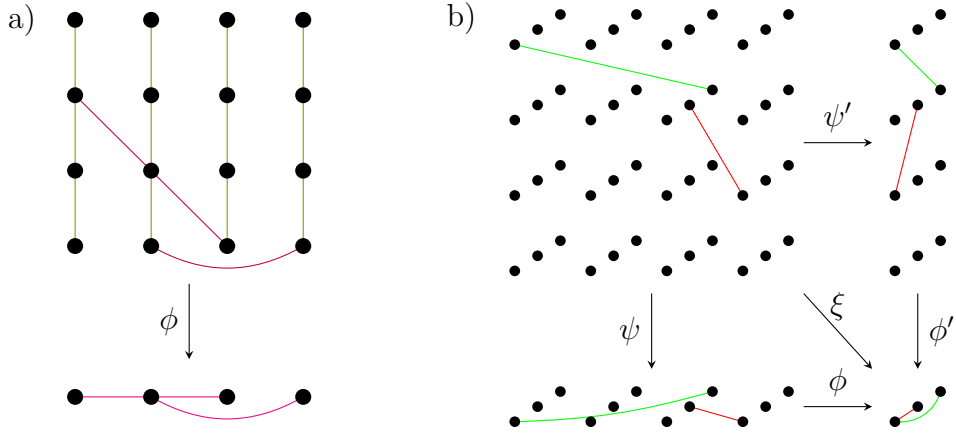
\begin{figure}
  \centering
  \begin{tikzpicture}
    \node at (0.3,4) {a)};
    \foreach \i in {1,...,4} { \foreach \j in {1,...,4} { \path (\i,\j) coordinate (X\i\j); } }
    \begin{scope}[olive]
      \draw (X11) -- (X12) -- (X13) -- (X14); \draw (X21) -- (X22) -- (X23) -- (X24); \draw (X31) -- (X32) -- (X33) --
      (X34); \draw (X41) -- (X42) -- (X43) -- (X44);
    \end{scope}
    \begin{scope}[purple]
      \draw (X13) -- (X22) -- (X31); \draw (X21) edge [bend right=30] (X41);
    \end{scope}
    \foreach \i in {1,...,4} { \foreach \j in {1,...,4} { \fill (X\i\j) circle (3pt); } }
    \path[->,>=stealth] (2.5,0.5) edge node[left] {$\phi$} (2.5,-0.5);
    \foreach \i in {1,...,4} { \path (\i,-1) coordinate (Y\i); }
    \draw[magenta] (2,-1) edge (1,-1) edge [bend right=30] (4,-1) edge (3,-1);
    \foreach \i in {1,...,4} { \fill (Y\i) circle (3pt); }
  \end{tikzpicture}
  \hfil
  \begin{tikzpicture}
    \node at (0.6,4.55) {b)};
    \foreach \i in {1,...,4} { \foreach \j in {1,...,4} { \foreach \k in {1,...,3} { \path (\i+0.3*\k,\j+0.2*\k)
          coordinate (X\i\j\k); } } }
    \draw[red] (X421) -- (X332); \draw[green] (X141) to (X333);
    \foreach \i in {1,...,4} { \foreach \j in {1,...,4} { \foreach \k in {1,...,3} { \fill (X\i\j\k) circle (2pt); } } }
    \foreach \i in {1,...,4} { \foreach \k in {1,...,3} { \path (\i+0.3*\k,-1+0.2*\k) coordinate (Y\i\k); } }
    \draw[red] (Y41) -- (Y32); \draw[green] (Y11) to[bend right=7] (Y33);
    \foreach \i in {1,...,4} { \foreach \k in {1,...,3} { \fill (Y\i\k) circle (2pt); } }
    \foreach \j in {1,...,4} { \foreach \k in {1,...,3} { \path (6+0.3*\k,\j+0.2*\k) coordinate (Yp\j\k); } }
    \draw[red] (Yp21) -- (Yp32); \draw[green] (Yp41) to (Yp33);
    \foreach \j in {1,...,4} { \foreach \k in {1,...,3} { \fill (Yp\j\k) circle (2pt); } }
    \foreach \k in {1,...,3} { \path (6+0.3*\k,-1+0.2*\k) coordinate (Z\k); }
    \draw[red] (Z1) -- (Z2); \draw[green] (Z1) to[bend right=40] (Z3);
    \foreach \k in {1,...,3} { \fill (Z\k) circle (2pt); }
    \path[->,>=stealth] (3.1,0.9) edge node[left] {$\psi$} (3.1,-0.1); \path[->,>=stealth] (6.6,0.9) edge node[right]
    {$\phi'$} (6.6,-0.1); \path[->,>=stealth] (5.1,2.9) edge node[above] {$\psi'$} (6,2.9); \path[->,>=stealth]
    (5.1,-0.6) edge node[above] {$\phi$} (6,-0.6); \path[->,>=stealth] (5.1,0.9) edge node[above] {$\xi$} (6,-0.1);
  \end{tikzpicture}
  \caption{a) Consider a graph $G$ with vertex set $V\subseteq\Zb^n$ and a linear map $\phi:\Zb^n\to\Zb^t$.  If the
    image of $G$ is connected and if each $\phi$-fiber of $G$ is connected, then $G$ itself is connected.
    The \textcolor{magenta}{edges} in the lower graph correspond to a PF Markov basis.  
    The vertical \textcolor{olive}{edges} in the upper graph correspond to a kernel Markov basis.
    The remaining \textcolor{purple}{edges} are lifts of the PF Markov basis.
    b) An illustration of the algorithm applied to the toric fiber product: The goal is to lift along the map~$\xi$.  This can
    be accomplished in two steps, by first lifting along $\phi$ and $\phi'$ and by then gluing the results.}
  \label{fig:Markov-idea}
\end{figure}
The idea behind lifting is sketched in Figure~\ref{fig:Markov-idea}(a): For a linear map $\phi:\Zb^{n}\to\Zb^{t}$ and a
graph $G=(V,E)$ with $V\subseteq\Zb^{n}$ define the image graph $\phi(G)=(V',E')$ by $V'=\phi(V)$ and $(x',y')\in
\phi(G)$ if and only if there is $(x,y)\in E$ with $x'=\phi(x)$ and $y'=\phi(y)$.  If $G$ is a fiber graph of~$\Bcal$
with respect to a Markov basis, then $G$ is connected, and so is~$\phi(G)$.  Our approach is to turn this observation
around as follows: Given a graph homomorphism~$\phi$ induced by a linear map as above, if its image $\phi(G)$ is
connected and if each fiber $G[\phi^{-1}(x)]  := (\phi^{-1}(x), \{ (u,v) \in E :
u,v \in \phi^{-1}(x)\}$ is connected, then $G$ is connected.  Thus, our strategy is as follows:
First, we find a set of moves that connects the $\phi$-fibers (that is the sets of the form $G[\phi^{-1}(x)]$).  Such
a set of moves we call a \emph{kernel Markov basis}, because
$\phi^{-1}(x) = V \cap (u + \ker_{\zz}\phi)$ for any~$u\in\phi^{-1}(x)$.
Second, we find a set of moves that connects the projected fiber graphs~$\phi(G)$.  Such a set of
moves we call a \emph{projected fiber (PF) Markov basis}.  Then we lift the PF Markov basis to obtain suitable moves
in~$\Zb^{n}$.
%
In the last step, the actual lifting step, we need to find ``enough'' preimages of the edges of the image graph.  In
Section~\ref{sec:Lifting-MB-GB} we give a general lifting algorithm of which the central step is again a Markov basis
computation.

The two steps of finding and lifting the PF Markov basis require a generalization of the notion of Markov
basis beyond the one that is typically used in applications.  
This generalized notion is introduced in Section~\ref{sec:markov}.
An important special case is the notion of an inequality Markov basis, which
is a set of moves that connects all generalized fibers of the form
$\{ u\in \LL : Du\ge c\}$  where $\LL$ is a fixed lattice and $D$ is a fixed matrix.
When a certain associated semigroup is normal, the problem of finding
a PF Markov basis can be solved by finding an inequality Markov basis (Section~\ref{sec:proj-GBs}).
  The problem of lifting a Markov
basis element can always be phrased as an inequality Markov
basis problem (Section~\ref{sec:lifting-GBs-algo}).

The main open problem of lifting is how to compute PF Markov bases in the general case,
when the associated affine semigroup 
is not normal.  If 
there are only finitely many holes (or the structure of the holes is sufficiently well understood), our techniques to
compute PF Markov bases can easily be adapted.  However, in general it can be computationally challenging to simply
compute the set of holes of an affine semigroup~\cite{HemmeckeTakemureYoshida09:Computing_holes_in_semigroups}.

Our lifting procedure not only works for Markov bases, but also for the related concept of Gr\"obner bases.  While a
Markov basis connects a set of integer vectors, a Gr\"obner basis allows to find minimal elements with respect to some
suitable order.  For this, the notion of Gr\"obner bases has to be generalized in a similar way as the notion of
Markov bases.  To apply our lifting ideas, we need to assume that the involved orders on the fibers and the
projected fibers are compatible, in a sense that is explained in detail in Section~\ref{sec:Lifting-MB-GB}.

As mentioned before, the complexity of lifting crucially depends on the choice of the map~$\phi$.  In general, we do not
know how to find a good map~$\phi$ for a given Markov basis or Gr\"obner basis problem, or whether such a good map
exists at all.  For hierarchical models, which we study in Section~\ref{sec:models}, it is natural to use
marginalization maps.

\medskip

Our motivation for studying the lifting procedure comes from the study of the toric fiber
product~\cite{Sullivant07:TFPs}.  Let 
$\Acal\in\Zb^{s\times t}$, and denote by $\Nb\Acal$ the affine semigroup generated by the columns~$\Acal$.  The toric
fiber product is a construction that takes two ideals $I,J$ that are homogeneous with respect to an $\Nb\Acal$-grading
and produces another larger ideal~$I\times_{\Acal}J$.  In this paper we focus on the case of toric ideals
$I_{\Bcal},I_{\Bcal'}$ associated with matrices~$\Bcal,\Bcal'$.  In this case, the toric fiber product $I_{\Bcal}\TFP
I_{\Bcal'}$ is again a toric ideal of a matrix~$\Bcal\TFP \Bcal'$, which is the (matrix) toric fiber product of $\Bcal$
and~$\Bcal'$.

The guiding principle in the theory of toric fiber products is that 
the product should inherit many of the nice properties of its factors. 
The complexity of the toric fiber product grows with its \emph{codimension} $\codim\Acal=t-\dim\Nb\Acal$, defined as the
difference between the number of columns of $\Acal$ and the dimension of the semigroup~$\Nb\Acal$.  If the codimension
is zero, then the toric fiber product behaves nicely: For example, Markov bases of $\Bcal$ and
$\Bcal'$ 
can be glued together to Markov bases of
$\Bcal\times\Bcal'$~\cite{Sullivant07:TFPs}, 
and if the two semigroups $\Nb\Bcal$ and $\Nb\Bcal'$ 
are normal, then so
is~$\Nb(\Bcal\TFP\Bcal')$~\cite{EngstromKahleSullivant13:TFP-II} 
(the corresponding statements for ideals also hold for non-toric ideals).  While the codimension one case is more
complicated, still a lot can be said, and if 
$\Bcal,\Bcal'$ are nice enough (specifically, if 
$\Bcal,\Bcal'$ have \emph{slow-varying Markov bases}), 
these can be glued together to produce a Markov basis 
of $\Bcal\TFP\Bcal'$, as shown in~\cite{EngstromKahleSullivant13:TFP-II}.  In~\cite{EngstromKahleSullivant13:TFP-II} it
was also shown that for higher codimensions when the 
Markov bases of $\Bcal$ and $\Bcal'$ satisfy the \emph{compatible projection property}, 
they can be glued together to produce a 
Markov basis of~$\Bcal\TFP\Bcal'$.  Although Markov bases with the compatible projection property always exist,
\cite{EngstromKahleSullivant13:TFP-II} did not give an approach for constructing them.

In the present paper we use our lifting idea to develop a framework to compute compatible Markov bases directly from
scratch as follows; see Figure~\ref{fig:Markov-idea}(b): The $\Nb\Acal$-gradings induce linear projections $\phi,\phi'$
from the fiber graphs of 
$\Bcal,\Bcal'$ to $\Nb^{t}$ (where $t$ is the width of~$\Acal$).  The analogous projection $\xi$ from the fiber graphs
of 
$\Bcal\TFP\Bcal'$ to $\Nb^{t}$ factorizes through these two maps.  We want to lift along~$\xi$.  The first observation
is that a kernel Markov basis $\Mcal_{0}$ of~$\xi$ is given by a corresponding basis of the \emph{associated
  codimension-zero product} (Lemma~\ref{lem:cod-zero-connects-fibers}).  A PF Markov basis can be computed if the
semigroup of the associated codimension-zero product is normal~(Section~\ref{sec:PFI}).  Finally, instead of lifting
along~$\xi$ it is possible to first lift along~$\phi$ and~$\phi'$ and to \emph{glue} the resulting Markov bases of~$I$
and~$J$ (Lemma~\ref{lem:gluing-lifts}).  In the language of~\cite{EngstromKahleSullivant13:TFP-II}, this fact implies
that the lifted Markov bases of $I$ and~$J$ satisfy the compatible projection property.
Our approach generalizes and allows to construct Gr\"obner bases of the toric fiber product.

To sum up, our strategy to compute Markov bases (or Gr\"obner bases) of a toric fiber product $\Bcal\TFP\Bcal'$ 
is as follows:
\begin{enumerate}
\item Find a description of the projected fibers.
\item Find a (generalized) Markov basis $\Gcal$ for this description.
\item Find Markov bases $\Mcal$ and $\Mcal'$ of 
  $\Bcal$ and $\Bcal'$ that lift $\Gcal$.
\item Glue $\Mcal$ and $\Mcal'$ to obtain a Markov basis of the toric fiber product.
\end{enumerate}

Our construction allows for a fairly straightforward
algorithm to produce Mar\-kov bases in many instances where they were
not known before.  We focus in particular in this paper on constructing
Markov bases for hierarchical models where our constructions allow us
to give explicit new instances exhibiting concrete bounds for the
finiteness results that are proven nonconstructively in~\cite{HillarSullivant2012}.
On the other hand, the fact that we cannot work simply with given
Markov or Gr\"obner bases of $\Bcal$ and $\Bcal'$ 
means it is difficult to predict when nice properties of $\Bcal$ and $\Bcal'$ 
are passed on to $\Bcal\TFP\Bcal'$. 
Even so, we provide some examples where a careful analysis allows us to bound degrees of Markov basis elements and
prove normality using the Gr\"obner bases.

\medskip

The paper is organized as follows.  After introducing the
generalized notion of Markov bases and Gr\"obner bases in Section~\ref{sec:markov}, we describe
how to lift Markov and Gr\"obner bases in Section~\ref{sec:Lifting-MB-GB}.
In Section~\ref{sec:TFP} we explain the toric fiber product construction and show how to lift in this case.
Our main motivating examples to study concern Markov bases of hierarchical
models, and we explore these examples in detail in Section \ref{sec:models}.
Section \ref{sec:degree-bounds} explores consequences of the general theory
to producing finiteness results for Markov bases of iterated toric fiber
products, which we apply to deduce finiteness results for Markov bases
of hierarchical models.


\section{Markov bases and Gr\"obner bases of lattice point problems} \label{sec:markov}

We introduce a notion of Markov basis and Gr\"obner basis
for lattice point problems, generalizing the usual notions associated
to integer matrices.
The basic idea is that a Markov basis of a family of sets of integer vectors consists of moves that connects all these
sets.  The usual notion of a Markov basis of a matrix $\Bcal\in\Zb^{h\times n}$ arises by considering the fibers
of~$\Bcal$, where $\Bcal$ is considered as a map $\Nb^{n}\to\Zb^{h}$.  Similary, a Gr\"obner basis of a family of sets
is a set of moves that allows to move towards a minimum on each of these sets, with respect to some order.

Let $\succeq$ be a preorder on $\Zb^{n}$.  Then $\succeq$ is a \emph{total preorder}, if 
for all $u,v\in\Zb^{n}$ either $u\succeq v$ or $v\succeq u$ (or both).  The preorder $\succeq$ is \emph{additive} if it is
total and if $u\succeq v$ implies $u+w\succeq v+w$ for all~$u,v,w\in\Zb^{n}$.  Our main example is the following: Let
$\mathfrak{c} \in \qq^{n}$ and define $\succeq_{\fkc}$ by
\begin{equation*}
  u\succeq_{\fkc} v
  \;:\Longleftrightarrow\;
  \langle \mathfrak{c} , u \rangle \geq \langle \mathfrak{c} , v \rangle.
\end{equation*}
We explicitly allow~$\fkc=0$.  Although the preorder $\succeq_{0}$ is trivial, in the sense that $u\succeq_{0}v$
holds for all $u,v\in\Zb^{n}$, it is useful since it allows a unified treatment of Markov bases and Gr\"obner bases.

More generally, for $\fkc_{1},\dots,\fkc_{r}\in\qq^{n}$, define $\succeq_{\fkc_{1},\dots,\fkc_{r}}$ by
\begin{align*}
  u\succeq_{\fkc_{1},\dots,\fkc_{r}} v 
  &\;:\Longleftrightarrow \;\;
  \langle \mathfrak{c}_{1} , u \rangle > \langle \mathfrak{c}_{1} , v \rangle, \\
  & \quad \text{or }\; \langle \mathfrak{c}_{1} , u \rangle = \langle \mathfrak{c}_{1} , v \rangle \text{ and } \langle \mathfrak{c}_{2} , u \rangle > \langle \mathfrak{c}_{2} , v \rangle,  \\
  & \quad \text{or }\; \langle \mathfrak{c}_{1} , u \rangle = \langle \mathfrak{c}_{1} , v \rangle
       \text{ and } \langle \mathfrak{c}_{2} , u \rangle = \langle \mathfrak{c}_{2} , v \rangle
       \text{ and } \langle \mathfrak{c}_{3} , u \rangle > \langle \mathfrak{c}_{3} , v \rangle, \\
       & \quad \quad\quad\vdots \\
  & \quad \text{or }\; \langle \mathfrak{c}_{1} , (u - v) \rangle = \langle \mathfrak{c}_{2} , (u-v) \rangle = \dots = \langle \fkc_{r-1} ,(u-v) \rangle = 0
  \ifarxiv
  \\ & \qquad\qquad\qquad\qquad\qquad\qquad\qquad\qquad\qquad\qquad\qquad\qquad
  \fi
       \text{ and } \langle \mathfrak{c}_{r} , u \rangle \ge \langle \mathfrak{c}_{r} , v \rangle.
\end{align*}
In fact, any additive preorder is of the form $\succeq_{\fkc_{1},\dots,\fkc_{r}}$~\cite{Robbiano85:Term_orderings}.
Moreover, many additive preorders that appear in practice can be approximated by preorders of
the form $\succeq_{\fkc}$ in a sense to be made precise later (see Remark \ref{rem:weight}).

Fix an additive preorder $\succeq$ on $\Zb^{n}$, let $\Fbf \subseteq \Zb^{n}$ and let $\Mcal \subseteq \zz^{n}$.
Construct a directed graph $\Fbf_{\cm,\succeq}$ with vertex set $\Fbf$ as follows: For $u,v \in \Fbf$ make an edge $u
\to v$ if and only if $v-u \in\pm\Mcal$ and $u \succeq v$.

In a directed graph $G$, declare two vertices $u, v$ equivalent $u \sim v$ if there
is a directed path from $u$ to $v$ and from $v$ to $u$.  The equivalence
classes of $G$ are called the strongly connected components of $G$.
The quotient by the equivalence relation is a directed graph
$G/{\sim}$ that does not contain directed cycles.

%
%
\begin{definition}
  \label{def:MB-GB}
  Let $\Fcal$ be a collection of subsets of $\zz^{n}$.  The set $\cm \subseteq \zz^{n}$ is a \emph{Gr\"obner basis} for
  $\Fcal$ with respect to $\succeq$ if for all $\Fbf \in \Fcal$ the following three conditions are satisfied: 
  \begin{enumerate}
  \item $\Fbf_{\cm, \succeq}$ is weakly connected (i.e.~the underlying undirected graph is connected).  
  \item $\Fbf_{\cm, \succeq}/{\sim}$ contains at most one sink.
  \item Each sink in $\Fbf_{\cm, \succeq}/{\sim}$ is a $\succeq$-minimum.
  \end{enumerate}
  In the special case that ${\succeq}={\succeq_{0}}$ (that is, $u\succeq v$ for all $u,v\in\Zb^{n}$), $\cm$ is called a
  \emph{Markov basis} for~$\Fcal$.  In this case, we also write $\Fbf_{\cm}$ instead of~$\Fbf_{\cm,\succeq_{0}}$.
  Since all edges in $\Fbf_{\Mcal}$ are bidirected, it is convenient to think of $\Fbf_{\cm}$ as an undirected
  graph, and conditions~1, 2 and 3 are equivalent to $\Fbf_{\cm}$ being connected.
\end{definition}
For arbitrary families of subsets~$\Fcal$, the three conditions of Definition~\ref{def:MB-GB} are
independent.
We are mostly interested in the case that all $\Fbf\in \Fcal$ are finite.  In this case, every connected component of
$\Fbf_{\Mcal,\succeq}/\sim$ has a minimum, and thus a sink.  Hence, $\cm$ is a Gr\"obner basis if and only if
$\Fbf_{\cm, \succeq}/{\sim}$ contains precisely one sink for all $\Fbf \in \Fcal$.  In particular, it suffices to only
check the second condition of the definition.  The argument remains true when $\Fbf$ is possibly infinite and $\succeq$
defines a well-ordering on each~$\Fbf$, a case which arises in the context of Gr\"obner bases for lattices with respect
to term orders (see Section~\ref{sec:MB-of-lattices}).

The following lemma follows directly from the definition:
\begin{lemma}
  \label{lem:Markov-is-Grb}
  Let $\Mcal$ be a Markov basis for $\Fcal$.  Then $\Mcal$ is a $\succeq$-Gröbner basis if and only if
  the following two conditions are satisfied:
  \begin{enumerate}
  \item If $u,v\in\Fbf$ are both $\succeq$-minimal, 
    then there is a path from $u$ to $v$ in $\Fbf_{\Mcal,\succeq}$.
  \item If $u\in\Fbf\in\Fcal$ is not $\succ$-minimal in~$\Fbf$, then there
    is a path $u\to u_{1}\to u_{2}\to\dots\to u_{r}$ in $\Fbf_{\Mcal,\succeq}$ with $u_{r}\prec u$.
  \end{enumerate}
\end{lemma}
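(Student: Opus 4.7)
The plan is to reduce the statement to matching conditions 2 and 3 of Definition~\ref{def:MB-GB} with the two conditions in the lemma, since condition 1 of the definition comes for free: because $\succeq$ is total, every $\pm\Mcal$-neighbor pair carries at least one directed edge in $\Fbf_{\Mcal,\succeq}$, so its underlying undirected graph coincides with $\Fbf_\Mcal$, which is connected by the Markov basis hypothesis.

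The key structural observation I will use throughout is that inside a single preorder class every $\pm\Mcal$-edge is bidirected. Consequently, the SCCs of $\Fbf_{\Mcal,\succeq}$ contained in that class are exactly the undirected connected components of $\Fbf_\Mcal$ restricted to the class, and if $u$ is $\succeq$-minimal then all out-edges from $u$ stay inside its class $[u]$, so its SCC $C_u$ is automatically a sink of $\Fbf_{\Mcal,\succeq}/{\sim}$.

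For the forward direction, assume $\Mcal$ is a Gr\"obner basis. If $u$ and $v$ are both minima, the observation makes $C_u$ and $C_v$ sinks, so by the uniqueness of the sink they coincide, and the strong connectivity of this SCC supplies the directed path needed for lemma condition 1. If $u$ is not minimal, then Gr\"obner condition 3 prevents $C_u$ from being a sink, so some edge $u'\to v'$ leaves $C_u$ with $u'\in C_u$; this edge must be strict, that is $u'\succ v'$, since otherwise $v'\to u'$ would also exist and place $v'$ back in $C_u$. Concatenating an internal directed path from $u$ to $u'$ with $u'\to v'$ then yields lemma condition 2.

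For the converse, assume lemma conditions 1 and 2, and let $C$ be any sink. I first argue that every $u\in C$ is $\succeq$-minimal. If not, condition 2 produces a directed path $u\to\cdots\to u_r$ with $u_r\prec u$; since $C$ is a sink this path never leaves $C$, so $u_r\in C$ and there is also a directed path from $u_r$ back to $u$, along which $\succeq$-values can only weakly decrease, forcing $u_r\succeq u$ and contradicting $u_r\prec u$. This establishes Gr\"obner condition 3; applying lemma condition 1 symmetrically in $(u,v)$ then places any two minima in a common SCC, so any two sinks coincide, giving Gr\"obner condition 2. The main obstacle is keeping the bookkeeping of preorder versus strong connectivity straight --- namely that $\pm\Mcal$-edges are bidirected within a preorder class and that directed paths are monotone in $\succeq$ --- after which both implications are essentially immediate.
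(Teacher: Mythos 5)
Your proof is correct, and it is essentially the argument the paper has in mind when it says the lemma ``follows directly from the definition'' and leaves the proof out. Your two structural observations do the work cleanly: (i) along any directed edge of $\Fbf_{\Mcal,\succeq}$, the preorder weakly decreases, so any two vertices of a common strongly connected component lie in the same preorder class (hence each SCC is entirely minimal or entirely non-minimal); and (ii) inside a preorder class the $\pm\Mcal$-edges are bidirected, which shows that an SCC containing a minimum is automatically a sink and makes weak connectivity follow from the Markov-basis hypothesis. Both directions then fall out as you describe. The only thing worth noting is that the converse implication does not need a sink to exist — Definition conditions 2 and 3 are vacuous if there is none — and your argument respects this, since you only reason about a sink $C$ once you assume it exists. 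This is a faithful filling-in of the omitted ``direct'' proof rather than a genuinely different route.
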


Gr\"obner bases can be used to find $\succeq$-minimal elements within a set $\Fbf\in\Fcal$.  In particular, a Gr\"obner
basis with respect to $\succeq_{\fkc}$ can be used to solve the integer program
\begin{equation*}
  \text{minimize } \langle \mathfrak{c} , u \rangle  \quad\text { subject to } u \in \Fbf
\end{equation*}
by following the arrows towards the sink equivalence class of $\Fbf$, if it exists.  If there is no
such sink, then the integer program is unbounded, and following the arrows gives an infinite descending sequence.


\subsection{Markov bases and Gr\"obner bases of lattices}
\label{sec:MB-of-lattices}

Let $\calb$ be a $h \times n$ integer matrix and $\Fbf(\Bcal, b)$ denote the
fiber 
$$
\Fbf(\Bcal, b) := \{ v \in \Nb^{n} :  \Bcal v = b  \}.
$$
More generally, if $\LL \subseteq \zz^{n}$ is a lattice (that is, a subgroup of $\zz^{n}$), we can consider fibers
of the form
$$\Flat(\LL,u) :=  \{ v \in \Nb^{n} :  v \in  \LL + u \}.$$
This contains the fibers $\Fbf(\Bcal, b)$ as a subcase, since
$\Fbf(\Bcal, \Bcal u) = \Flat(\ker_{\zz} \Bcal,u)$.

The most commonly studied case of both Markov basis and Gr\"obner basis
arises when
$$\Fcal  = \cf(\Bcal) :=   \{  \Fbf(\Bcal, b)  :  b \in \zz^{h} \}.$$
In this case, Markov bases of $\Fcal$ correspond to binomial generating sets of the associated toric ideal $I_{\calb}$,
and Gr\"obner bases of $\Fcal$ correspond to Gr\"obner bases of~$I_{\calb}$ (see Theorem~\ref{thm:MBthm} below).  Similarly, Markov
bases of
$$\Fclat(\Lcal) :=   \{  \Fbf(\Lcal, u)  :  u \in \zz^{n} \}$$
correspond to generating sets of lattice ideals (see Corollary~\ref{cor:MBthm-for-lattice-ideals} below).

Let $\Kb[x]=\Kb[x_{1},\dots,x_{n}]$ be a polynomial ring.  Any additive preorder $\succeq$ on $\Zb^{n}$
induces a preorder on the monomials in $\Kb[x]$ (denoted by the same symbol) by $x^{u}  \succeq  x^{v}$ if and only if~$u\succeq v$.
If ${\succeq}={\succeq_{\fkc}}$, then this preorder is called the \emph{weight order} induced by~$\mathfrak{c}$. 

For any polynomial $f \in \Kb[x]$, the initial
form of $f$ with respect to $\succeq$, denoted by $\ini_{\succeq}(f)$,
is the sum of all terms $c_{u}x^{u}$ in $f$ such that $u$
is $\succeq$-maximal.  For an ideal $I \subseteq \Kb[x]$, 
\begin{equation*}
  \ini_{\succeq}(I) := \langle {\rm in}_{\succeq}(f) : f \in I \rangle.
\end{equation*}
A set of polynomials $G \subseteq I$ is a Gr\"obner basis for $I$ with respect to
$\succeq$ if and only if
\begin{equation*}
  \langle \ini_{\succeq}(g) : g \in G \rangle  =  \ini_{\succeq}(I).
\end{equation*}
Note that a Gr\"obner basis with respect to $\succeq_{0}$
is nothing but a generating set of~$I$.  

\begin{remark}
  \label{rem:weight}
  Most orders that are used in practice are term orders:
  An additive preorder on $\Zb^{n}$ is called a \emph{term order} if it is a well-ordering; that is $x^{u} \succeq
  x^{v}$ and $x^{v} \succeq x^{u}$ implies $x^{u} = x^{v}$, and every set of monomials has a minimum with respect
  to~$\succeq$.  If $\succeq$ is a term order, then $\ini_{\succeq}(I)$ is a monomial ideal for any~$I$.
 
  For any term order $\succeq$ on $\Kb[x]$ and any ideal $I \subseteq \Kb[x]$, there exists a weight vector $\fkc$ such
  that ${\rm in}_{\succeq}(I) = {\rm in}_{\succeq_{\fkc}}(I) $; see~\cite[Proposition~1.11]{Sturmfels1996:GBCP}.  Hence,
  weight preorders can be used to approximate term orders when working with a fixed ideal.
\end{remark}

For any subset $\Mcal\subseteq\Zb^{n}$ consider the binomial ideal
\begin{equation*}
  I_{\Mcal} := \langle x^{m^{+}} - x^{m^{-}} : m\in\Mcal\rangle,
\end{equation*}
where $m=m^{+}-m^{-}$ is the decomposition of $m$ into its positive and negative part with
$\supp(m^{+})\cap\supp(m^{-})=\emptyset$.  For a lattice $\Lcal\subseteq\Zb^{n}$, the ideal $I_{\Lcal}$ is called a
\emph{lattice ideal}.  If $\Lcal$ is a saturated lattice, that is, if $\Lcal=\Zker\Bcal$ for some integer
matrix~$\Bcal$, then $I_{\Lcal}=:I_{\Bcal}$ is called a \emph{toric ideal}.

\begin{theorem}\cite{DiaconisSturmfels1998,Sturmfels1996:GBCP}
  \label{thm:MBthm}
  A finite subset $\Mcal\subseteq\Zker\Bcal$ is a Markov basis of $\cf(\calb)$ 
  if and only if $I_{\Mcal}=I_{\Bcal}$.
For any additive preorder $\succeq$ on $\Zb^{n}$ for which $0$ is
the smallest element in $\Nb^{n}$, 
a finite subset $\Mcal\subseteq\Zker\Bcal$ is a $\succeq$-Gr\"obner basis of $\cf(\calb)$
if and only if $\{ x^{m^{+}} - x^{m^{-}} : m\in\Mcal \}$ is a $\succeq$-Gr\"obner basis of~$I_{\calb}$.
\end{theorem}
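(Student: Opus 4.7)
The plan is to prove the Markov basis equivalence first, then bootstrap the Gr\"obner basis part. For the Markov basis direction, the inclusion $I_{\Mcal}\subseteq I_{\Bcal}$ is immediate since each $m\in\Mcal$ lies in $\Zker\Bcal$, so $x^{m^{+}}$ and $x^{m^{-}}$ have the same image under the monomial map defining $I_{\Bcal}$. The reverse inclusion rests on the standard fact that $I_{\Bcal}$ is generated by binomials $x^{u}-x^{v}$ with $u,v\in\Fbf(\Bcal,b)$ for some $b$. Given such a binomial, the Markov property supplies a path $u=u_{0},u_{1},\dots,u_{r}=v$ in $\Fbf(\Bcal,b)_{\Mcal}$, and the telescoping identity
\[
x^{u}-x^{v}=\sum_{i=0}^{r-1}\bigl(x^{u_{i}}-x^{u_{i+1}}\bigr)
\]
expresses it in $I_{\Mcal}$, since each summand is a monomial multiple of a generator $x^{m^{+}}-x^{m^{-}}$ with $m=\pm(u_{i+1}-u_{i})\in\Mcal$.

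For the converse, the cleanest route is to introduce two translation-invariant equivalence relations on $\Nb^{n}$: let $u\sim_{\Mcal}v$ iff $u$ and $v$ lie in the same connected component of the graph with edges $\pm\Mcal$, and $u\sim_{\Bcal}v$ iff $\Bcal u=\Bcal v$. Both relations are congruences on the monoid $\Nb^{n}$, so each gives rise to a quotient monoid and an associated binomial ideal in $\Kb[x]$; by construction these ideals are $I_{\Mcal}$ and $I_{\Bcal}$ respectively. Thus $I_{\Mcal}=I_{\Bcal}$ if and only if $\sim_{\Mcal}\,=\,\sim_{\Bcal}$, and since the $\sim_{\Bcal}$-classes are exactly the fibers $\Fbf(\Bcal,b)$, this happens if and only if $\Mcal$ is a Markov basis. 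The main technical point to verify is that $I_{\Mcal}$ really is the kernel of the map $\Kb[x]\to\Kb[\Nb^{n}/\!\sim_{\Mcal}]$; this requires a cancellation argument showing that any $\Kb$-linear combination of the generators that happens to be a pure binomial must already be expressible fiber-by-fiber, so two monomials become equal in the quotient only when they are actually connected by $\Mcal$-moves.

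For the Gr\"obner basis statement, I would bootstrap from the Markov case. First, the assumption that $0$ is the $\succeq$-smallest element of $\Nb^{n}$ forces each fiber $\Fbf(\Bcal,b)$ to be $\succeq$-bounded below and finite, so minima exist. For $m\in\Mcal$ we may arrange $m^{+}\succeq m^{-}$, giving $\ini_{\succeq}(x^{m^{+}}-x^{m^{-}})=x^{m^{+}}$ whenever the inequality is strict. By the standard Gr\"obner basis criterion, $\{x^{m^{+}}-x^{m^{-}}:m\in\Mcal\}$ is a $\succeq$-Gr\"obner basis of $I_{\Bcal}$ iff $\ini_{\succeq}(I_{\Bcal})=\langle\ini_{\succeq}(g):g\in G\rangle$, which translates combinatorially to: for every $u\in\Nb^{n}$ that is not $\succeq$-minimal in its fiber, there is $m\in\Mcal$ with $m^{+}\leq u$ coordinate-wise and $u-m\prec u$. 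This is exactly the second condition of Lemma~\ref{lem:Markov-is-Grb}, while the first condition of Lemma~\ref{lem:Markov-is-Grb} corresponds to the fact that $\succeq$-equivalent minima must also be linked by the binomials whose two terms are $\succeq$-equivalent. The principal obstacle is handling this preorder case carefully: when $\succeq$ is not a term order, the initial form of a binomial can be the binomial itself, and the "at most one sink" condition of Definition~\ref{def:MB-GB} must be matched algebraically to the contribution of such non-monomial initial forms to $\ini_{\succeq}(I_{\Bcal})$.
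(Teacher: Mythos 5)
The paper does not prove this theorem itself; it cites Theorems~5.3 and~5.5 of Sturmfels' book, so there is no internal proof to compare against. Your treatment of the Markov basis equivalence is essentially the standard argument from that source: the telescoping sum in the forward direction is exactly right, and the converse via the correspondence between binomial ideals and monoid congruences is a valid route, though you defer the ``cancellation argument'' (that $I_{\Mcal}$ really equals the kernel of $\Kb[x]\to\Kb[\Nb^{n}/\!\sim_{\Mcal}]$) to a remark rather than proving it. That step is the genuine content of Sturmfels' Theorem~5.3 and cannot be dismissed as a ``technical point.''

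The Gr\"obner basis part has two real problems. First, the claim that ``$0$ being $\succeq$-smallest in $\Nb^{n}$ forces each fiber $\Fbf(\Bcal,b)$ to be $\succeq$-bounded below and finite'' is false: if $\ker_{\Zb}\Bcal\cap\Nb^{n}\neq\{0\}$ the fibers are infinite, no matter what $\succeq$ is. The conclusion that minima exist is still correct, but for a different reason — the hypothesis gives a well-preorder on $\Nb^{n}$ by a Dickson's-lemma argument (an infinite strictly descending chain $u_{1}\succ u_{2}\succ\cdots$ would contain $i<j$ with $u_{i}\leq u_{j}$ coordinatewise, hence $u_{j}-u_{i}\in\Nb^{n}$, hence $u_{j}\succeq u_{i}$, a contradiction). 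Second, and more seriously, the combinatorial condition you derive (``there is $m\in\Mcal$ with $m^{+}\leq u$ and $u-m\prec u$'') demands a \emph{one-step} strict decrease, whereas condition~2 of Lemma~\ref{lem:Markov-is-Grb} only requires a non-increasing path that eventually drops strictly. These are not equivalent for a preorder: a fiber $\{a,b,c\}$ with $a\sim b\succ c$ and moves $a-b$, $b-c$ but not $a-c$ satisfies the lemma's condition at $a$ but has no single decreasing move from $a$. Bridging this gap — and, more broadly, matching $\ini_{\succeq}(I_{\Bcal})$ to the ``at most one sink'' condition when initial forms can themselves be binomials — is exactly what you flag as ``the principal obstacle'' and leave unresolved, so the iff in the Gr\"obner direction is not established.
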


See Theorems 5.3 and 5.5 in \cite{Sturmfels1996:GBCP} for a proof.
Theorem~\ref{thm:MBthm} can easily be generalized to lattice ideals as follows:
\begin{corollary} 
  \label{cor:MBthm-for-lattice-ideals}
  A finite set $\Mcal\subseteq\Lcal$ is a Markov basis of $\Fclat(\Lcal)$ 
  if and only if $I_{\Mcal}=I_{\Lcal}$.
  For any additive preorder $\succeq$ on $\Zb^{n}$ for which $0$ is
the smallest element in $\Nb^{n}$,  a finite subset $\Mcal\subseteq\Lcal$ is a
  $\succeq$-Gr\"obner basis of $\cf(\Lcal)$ if and only if $\{ x^{m^{+}} - x^{m^{-}} :
  m\in\Mcal \}$ is a $\succeq$-Gr\"obner basis of $I_{\Lcal}$.
\end{corollary}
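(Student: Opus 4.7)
The plan is to run the combinatorial argument of Theorem~\ref{thm:MBthm} essentially verbatim, observing that saturation of $\Lcal$ plays no role. The only algebraic input needed is the standard characterization: for $u,v\in\Nb^{n}$, one has $x^{u}-x^{v}\in I_{\Lcal}$ if and only if $u-v\in\Lcal$. The ``if'' direction is immediate from the definition of $I_{\Lcal}$; for ``only if,'' I would write any $f\in I_{\Lcal}$ as a $\Kb[x]$-linear combination of generators $x^{m^{+}}-x^{m^{-}}$ with $m\in\Lcal$, note that each such generator pairs two monomials lying in the same class of the equivalence relation $u\sim v\Leftrightarrow u-v\in\Lcal$ on $\Nb^{n}$, and conclude by collecting $f$ on each equivalence class.

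For the Markov basis equivalence, the forward direction uses telescoping: if $\Mcal$ is a Markov basis of $\Fclat(\Lcal)$, then for every $m\in\Lcal$ the vectors $m^{+}$ and $m^{-}$ lie in the same fiber $\Flat(\Lcal,m^{+})$, so a $\pm\Mcal$-path $m^{+}=v_{0},v_{1},\dots,v_{k}=m^{-}$ in $\Flat(\Lcal,m^{+})_{\Mcal}$ gives
\begin{equation*}
  x^{m^{+}}-x^{m^{-}}=\sum_{i=1}^{k}(x^{v_{i-1}}-x^{v_{i}}),
\end{equation*}
each summand of which factors as a monomial times a binomial from $\{x^{\mu^{+}}-x^{\mu^{-}}:\mu\in\Mcal\}$; hence every generator of $I_{\Lcal}$ lies in $I_{\Mcal}$, and the reverse containment is trivial. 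Conversely, if $I_{\Mcal}=I_{\Lcal}$ and $v,w$ lie in the same fiber, then $x^{v}-x^{w}\in I_{\Mcal}$ by the characterization in the previous paragraph, and the standard monomial-cancellation argument applied to any polynomial expression for $x^{v}-x^{w}$ in terms of generators extracts the desired $\pm\Mcal$-path.

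For the Gr\"obner basis statement, the hypothesis that $0$ is $\succeq$-smallest in $\Nb^{n}$ guarantees that, with signs chosen so that $\mu^{+}\succeq\mu^{-}$ for each $\mu\in\Mcal$, the monomial $x^{\mu^{+}}$ is the $\succeq$-leading term of $x^{\mu^{+}}-x^{\mu^{-}}$, and reducing $x^{u}$ by this binomial (when $x^{\mu^{+}}\mid x^{u}$) produces a strictly $\succ$-smaller monomial. Under this hypothesis, Condition~2 of Definition~\ref{def:MB-GB} translates, vertex by vertex, into the statement that every $u\in\Nb^{n}$ not $\succ$-minimal in its fiber has an outgoing arrow, i.e., $x^{\mu^{+}}\mid x^{u}$ for some $\mu\in\Mcal$ with $\mu^{+}\succ\mu^{-}$; equivalently, $\ini_{\succeq}(I_{\Mcal})=\ini_{\succeq}(I_{\Lcal})$, which is the classical Gr\"obner basis criterion.

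I expect the main subtlety to arise when $\succeq$ is only a preorder (not a strict order) on $\Zb^{n}$: fibers then contain nontrivial $\succeq$-equivalence classes, and Conditions~1 and~3 of Definition~\ref{def:MB-GB} require that the restriction of $\Mcal$ to $\succeq$-weight-zero moves be a Markov basis for the induced fiber problem on each such class. Handling this cleanly means applying the Markov basis part of the corollary to the sublattice $\Lcal\cap\{m:\langle\fkc,m\rangle=0\}$ in the case ${\succeq}={\succeq_{\fkc}}$ (and iterating for preorders of the form $\succeq_{\fkc_{1},\dots,\fkc_{r}}$), which is the part of the argument requiring the most care.
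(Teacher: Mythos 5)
Your proposal is correct and follows the route the paper intends: the Sturmfels/Diaconis--Sturmfels correspondence (Theorems~5.3 and~5.5 of the cited CBMS notes, to which the paper defers for Theorem~\ref{thm:MBthm}) is transported verbatim to arbitrary sublattices once one observes that the key algebraic fact---$x^{u}-x^{v}\in I_{\Lcal}$ if and only if $u-v\in\Lcal$---does not use saturation of $\Lcal$. The paper offers no written proof of the corollary (it simply asserts it is an easy generalization), and your sketch, including the telescoping argument, the monomial-cancellation converse, and the flagged extra care for genuine additive preorders as opposed to term orders, is exactly what that generalization amounts to; the one small gap is that your ``only if'' direction for the binomial characterization should also note that $I_{\Lcal}$ contains no monomials (e.g.\ by evaluating at $(1,\dots,1)$) before collecting terms by $\Lcal$-equivalence class.
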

Hilbert's basis theorem implies that finite Markov bases and Gr\"obner bases 
exist for any lattice.
They can be computed using \verb|4ti2|~\cite{4ti2}.


\subsection{Markov bases and Gr\"obner bases of systems of inequalities}

Let $D\in\Zb^{r\times n}$ be an integer matrix and $\LL\subseteq\zz^{n}$ a lattice.  For any $c\in\Zb^{r}$ and $v \in
\zz^{n}$ let
\begin{equation*}
  \Fbin(\LL, v,D,c) := \{  u \in \LL + v  :  Du \geq c  \},
\end{equation*}
and let
\begin{equation*}
  \Fcin(\LL,D) := \big\{ \Fbin(\LL,v,D,c) : c\in\Zb^{r} \mbox{ and } v \in \zz^{n} \big\}.
\end{equation*}
A Markov basis of $\Fcin(\LL,D)$ is called an \emph{inequality Markov basis} of~$\LL$ and $D$ or just an
\emph{$(\LL,D)$-Markov basis}.  If $\succeq$ is an additive preorder, then a $\succeq$-Gr\"obner basis of
$\Fcin(\LL,D)$ is called an \emph{$(\LL,D,\succeq)$-Gr\"obner basis}.
Later, we often choose~$\LL=\Zb^{n}$.  This choice allows us to study linear inequalities over the integers.  In this
case, we suppress $\LL$ from the notation.
  
Inequality Markov bases can be computed by relating them to Markov bases of lattices, which
can be computed in practice using \verb|4ti2|.  We explain this
in the remainder of the section.  The first step is to restrict to
the case that the restriction of the matrix $D$ to the lattice~$\LL$ has rank~$\dim(\LL)$; that is $\LL \cap  \ker_{\zz} D  = 0$.

Let $n'=\rank(D)$, and choose an isomorphism $\kappa:\zz^{n'}\!\cong\zz^{n}/\Zker D$.
Let $\LL'$ be the lattice $\kappa^{-1}((\LL+\Zker D)/\Zker D)$.  Thus, $\LL'\cong\LL/(\LL\cap\Zker D)$.
Let $D'$ be a matrix that represents the induced map 
\begin{equation*}
  \zz^{n'}\to\zz^{r},
  \quad
  u \mapsto D \kappa(u)
\end{equation*}
(note that $\kappa(u)$ is a set that $D$ maps to a single element in~$\zz^{r}$).  Then $\LL'\cap\Zker D'=0$.
There is an isomorphism of lattices
\begin{equation*}
  \iota:\LL \cong \LL' \times (\LL\cap\ker_{\Zb}D).
\end{equation*}
Similarly, for each~$c\in\Zb^{r}$ and $u\in\Zb^{n}$,
\begin{equation*}
  \big\{ u\in\LL + v : D u\ge c\big\} \cong \big\{ u'\in\LL' + v' : D'u'\ge c\big\} \times (\LL\cap\ker_{\Zb}D),
\end{equation*}
where $v'=v+\LL\cap\ker_{\Zb}D$.
In this sense, solving the system of inequalities $D u \ge c$ for $u\in \LL+v$ is equivalent to solving the system $D'
u'\ge c$ for $u'\in \LL'+v'$, and if $\Gcal'\subset\LL'$ is an $(\LL',D')$-Markov basis, then an $(\LL,D)$-Markov basis
is given by
\begin{equation*}
  \big\{ \iota^{-1}(b,0) : b\in\Gcal' \big\}  \cup \{\iota^{-1}(f_{1}),\dots,\iota^{-1}(f_{s})\},
\end{equation*}
where $f_{1},\dots,f_{s}$ generate $\LL\cap\ker_{\Zb}D$.  Conversely, any $(\LL,D)$-Markov basis $\Gcal$ can be
truncated to an $(\LL',D')$-Markov basis $\Gcal'=\iota_{1}(\Gcal)$, where $\iota_{1}:\LL\to\LL'$ denotes the first
component of $\iota$.
Therefore, it suffices to know how to compute inequality Markov bases in the case that the
restriction of $D$ to~$\LL$ has rank~$\dim(\LL)$.  If $L\in\Zb^{n\times t}$ is a matrix such that the columns of $L$ are
a lattice basis of~$\Lcal$, then this is the same as requiring that~$\rank(DL)=t$.
%

\begin{lemma}\label{lem:ineq-Markov}
  Let $D \in \zz^{r \times n}$, let $\Lcal\subseteq\Zb^{n}$ be a lattice, let $L$ be an $(n\times t)$-integer
  matrix such that the columns of $L$ are a lattice basis of~$\Lcal$, and assume that $\tilde D=DL\in\Zb^{r\times t}$
  has rank~$t$.
 \begin{enumerate}
 \item %
   If $\Gcal$ is an $(\LL, D)$-Markov basis, then $D(\Gcal)$ is a Markov basis of the lattice ~$\zz \tilde D$ spanned by
   the columns of~$\tilde D$.
  \item %
    If $\Gcal'$ is a Markov basis of~$\zz \tilde{D}$, then $D^{-1}(\Gcal')\cap\Lcal$ (the inverse image of $\Gcal'$ under the
    linear map corresponding to~$D$) is an $(\LL, D)$-Markov basis.
  \end{enumerate}    
\end{lemma}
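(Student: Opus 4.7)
The plan is to reduce this to a bookkeeping exercise by observing that the hypothesis $\rank(DL)=t$ makes $D$ an isomorphism from $\Lcal$ onto the lattice $\zz\tilde D$, and then checking that the inequality fibers on one side and the lattice fibers on the other are literally identified by this isomorphism.

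First I would note that since the columns of $L$ form a $\zz$-basis of $\Lcal$, having $\rank(\tilde D)=\rank(DL)=t$ is equivalent to $\Lcal \cap \ker_{\Zb} D = 0$. Hence the restriction $D|_{\Lcal}\colon\Lcal\to\Zb^{r}$ is injective, and its image is exactly $\Zb\tilde D=\{\tilde D w : w\in\zz^{t}\} = D(L\zz^{t})=D(\Lcal)$. So $D|_{\Lcal}$ is an isomorphism of lattices $\Lcal \xrightarrow{\sim} \Zb\tilde D$. In particular, for every $g'\in\Gcal'\subseteq\Zb\tilde D$ there is a unique $m(g')\in\Lcal$ with $Dm(g')=g'$, and the set $D^{-1}(\Gcal')\cap\Lcal$ is precisely $\{m(g'):g'\in\Gcal'\}$, with $D$ mapping it bijectively back to $\Gcal'$.

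Next I would set up, for each $v\in\zz^{n}$ and $c\in\zz^{r}$, the affine map $\alpha_{v,c}\colon \Lcal+v\to Dv-c+\Zb\tilde D$ defined by $\alpha_{v,c}(u)=Du-c$. Injectivity of $D|_{\Lcal}$ makes this a bijection. Under $\alpha_{v,c}$ the condition $Du\ge c$ becomes $\alpha_{v,c}(u)\in\Nb^{r}$, so $\alpha_{v,c}$ restricts to a bijection
\begin{equation*}
  \Fbin(\Lcal,v,D,c) \;\xrightarrow{\ \sim\ }\; \Flat(\Zb\tilde D,\,Dv-c).
\end{equation*}
Moreover, for any $\Mcal\subseteq\Lcal$ and $u_{1},u_{2}\in\Fbin(\Lcal,v,D,c)$, one has $u_{1}-u_{2}\in\pm\Mcal$ iff $\alpha_{v,c}(u_{1})-\alpha_{v,c}(u_{2})=D(u_{1}-u_{2})\in\pm D(\Mcal)$. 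So $\alpha_{v,c}$ is an isomorphism of fiber graphs, sending moves $\Mcal\subseteq\Lcal$ to moves $D(\Mcal)\subseteq\Zb\tilde D$ and vice versa via $D|_{\Lcal}^{-1}$.

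Finally I would observe that as $(v,c)$ ranges over $\zz^{n}\times\zz^{r}$, the offset $Dv-c$ ranges over all of $\zz^{r}$ (take $v=0$, $c=-y$ to hit any $y$), so the families $\Fcin(\Lcal,D)$ and $\Fclat(\Zb\tilde D)$ are in bijective correspondence under $\alpha$. Part~(1) then follows: given $\Gcal\subseteq\Lcal$ connecting every $\Fbin(\Lcal,v,D,c)$, transporting any path in $\Flat(\Zb\tilde D,y)$ back through $\alpha_{0,-y}^{-1}$ to $\Fbin(\Lcal,0,D,-y)$, connecting with $\Gcal$, and pushing forward yields a path in $\Flat(\Zb\tilde D,y)$ using moves in $\pm D(\Gcal)$. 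Part~(2) is the reverse: for any $\Fbin(\Lcal,v,D,c)$ apply $\alpha_{v,c}$, connect in $\Flat(\Zb\tilde D,Dv-c)$ with $\Gcal'$, and pull back via $D|_{\Lcal}^{-1}$ to obtain a path with moves in $\pm(D^{-1}(\Gcal')\cap\Lcal)$. The only potential subtlety, rather than a real obstacle, is making sure this pullback makes sense step-by-step, which is guaranteed by the first paragraph's remark that $\Gcal'\subseteq\Zb\tilde D=D(\Lcal)$.
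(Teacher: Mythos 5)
Your proof is correct and takes essentially the same approach as the paper: both construct the bijection $\Fbin(\Lcal,v,D,c)\to\Flat(\Zb\tilde D,Dv-c)$ and observe that its linear part is $D$ (the paper builds this map through a chain of three bijections involving a left-inverse of $L$, and then simplifies to precisely your $u\mapsto Du-c$). Your version simply writes down the affine map directly, which is slightly cleaner but not a different argument.
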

\begin{proof}
  It suffices to show that the fibers we want to connect by the $(\LL,D)$-Markov basis
  and by the $\zz \tilde D$-Markov basis are bijective via affine maps with linear part given by~$D$.  By assumption,
  $D$ is invertible on~$\LL$.  Now,
  \begin{align*}
    \Fbin (\LL, v,D,c) &  =  \{  u \in \LL + v  :  Du \geq c  \}
    =     \{  L w + v  : D(Lw + v)  \geq c  \}
    \\ &
    \overset{(1)}{\cong}  \{w  \in  \zz^{t} :   \tilde{D} w  \geq  c - Dv  \}  \\
    & \overset{(2)}{\cong}  \{  (w,w')  \in \zz^{t} \times \nn^{r}  :  \tilde{D} w - w' =  c - Dv   \}  \\
    & \overset{(3)}{\cong}  \{  w' \in \nn^{r}  :   w'  \in  \zz\tilde{D} + Dv -c  \}
    =   \Flat(\zz \tilde{D}, Dv-c ).
  \end{align*}
  The bijection (1) arises from multiplication by a left-inverse of~$L$.  The bijections (2) and (3) arise from the
  linear projections from $(w,w')$ to either the first or second coordinate.
  In total, the resulting map from $\Fbin(\LL, v,D,c)$ to $\Flat(\zz \tilde{D}, Dv-c )$ is given by $u\mapsto \tilde
  D\ol L(u-v) - c + Dv$, where $\ol L$ is a left-inverse of~$L$.  By assumption, $u-v\in\Lcal$, and hence $\tilde D\ol
  L(u-v)=D(u-v)$.  Therefore, for any values of $c$ and $v$, the bijection between $\Fbin(\LL, v,D,c)$ and $\Flat(\zz
  \tilde{D}, Dv-c)$ is an affine map with linear part given by~$D$.
\end{proof}

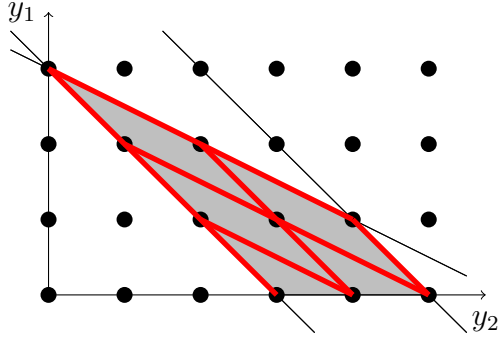
\begin{figure}
  \centering
  \begin{tikzpicture}
    \foreach \i in {0,...,7} { \foreach \j in {0,...,5} { \path (\i,\j) coordinate (X\i\j); } }
    \begin{scope}[fill=lightgray]
      \filldraw (6,1) -- (5,2) -- (1,4) -- (4,1) -- cycle;
    \end{scope}
    \foreach \i in {1,...,6} { \foreach \j in {1,...,4} { \fill (X\i\j) circle (3pt); } }
    \begin{scope}[->]
      \draw (1,1) -- (6.75,1);
      \draw (1,1) -- (1,4.75);
    \end{scope}
    \node at (6.75,0.65) {$y_{2}$};
    \node at (0.65,4.75) {$y_{1}$};
    \filldraw (6.5,0.5) -- (2.5,4.5);
    \filldraw (6.5,1.25) -- (0.5,4.25);
    \filldraw (4.5,0.5) -- (0.5,4.5);
    \begin{scope}[line width=2pt,red] 
      \draw (X61) -- (X52); \draw (X51) -- (X42) -- (X33); \draw (X41) -- (X32) -- (X23) -- (X14);
      \draw (X52) -- (X33) -- (X14); \draw (X61) -- (X42) -- (X23); \draw (X51) -- (X32);
    \end{scope}
  \end{tikzpicture}
  
  \caption{The set of solutions to~\eqref{eq:simple-inequalities} for $c=(0,5,3,6)$.  The
    red edges correspond to the moves in the Markov basis~\eqref{eq:simple-iMB}.}
\label{fig:simple-iMB}
\end{figure}

\begin{example}  \label{ex:simple-iMB}
  Suppose we want to compute an inequality Markov basis of 
    \begin{equation*}
    D =
    \scalebox{0.9}{$
    \begin{pmatrix}
      1 &  0 \\
      -1 & -1 \\
      1 &  1 \\
      -2 & -1
    \end{pmatrix}$},
  \end{equation*}
  that is, we want to obtain a set of moves that connects all integer points $(y_{1}, y_{2})$
  that satisfy
  \begin{equation}
    \label{eq:simple-inequalities}
    y_{1} \ge c_{1},
    \qquad
    y_{1} + y_{2} \le c_{2},
    \qquad
    y_{1} + y_{2} \ge c_{3},
    \qquad
    2 y_{1} + y_{2} \le c_{4}
  \end{equation}
  for any $c_{1}, c_{2}, c_{3}, $ and $c_{4}$.
  The two columns of $D$ span a two-dimensional lattice~$\Zb D\subset\Zb^{3}$.  By \verb|4ti2|, a Markov basis of $\Zb D$ is
  given by
  \begin{equation*}
    \Gcal' = \Big\{ (1, 0, 0, -1), \quad (1, 1, -1, 0) \Big\}.
  \end{equation*}
  The inverse image of $\Gcal'$ under $D$ is
  \begin{equation}
    \label{eq:simple-iMB}
    D^{-1}\Gcal' = \Big\{ (1, -1), \quad (1, -2) \Big\}.
  \end{equation}
  Lemma~\ref{lem:ineq-Markov} (with $\Lcal=\Zb^{2}$ and $\tilde D=D$) implies that this is an inequality Markov basis.
  The situation is visualized in Figure~\ref{fig:simple-iMB}.  \qed
\end{example}

\begin{example}
  \label{ex:simple-ilMB}
  Suppose we want to compute an inequality Markov bases for the following
  system of equations and inequalities:
  \begin{equation}
    \label{eq:simple-ineqs-l}
    \begin{gathered}
      y_{1} + y_{2} + y_{3} = 0,
      \\
      y_{1} \ge c_{1},
      \qquad
      y_{3} \ge c_{2},
      \qquad
      y_{1} + y_{2} \ge c_{3},
      \qquad
      y_{1} - y_{3} \le c_{4}.
    \end{gathered}
  \end{equation}
  One possibility to study this system is to replace the first equation by the two inequalities
  \begin{equation*}
    y_{1} + y_{2} + y_{3} \ge 0
    \quad\text{ and }\quad
    y_{1} + y_{2} + y_{3} \le 0.
  \end{equation*}
  This leads to a matrix $D'$ of size~$6\times 3$ and a Markov basis of cardinality~3.  Alternatively, one can observe that the first equation defines a
  lattice~$\Lcal$, which is generated by the columns of
  \begin{equation*}
    L =
    \scalebox{0.9}{$
    \begin{pmatrix}
      1 & 0 \\
      0 & 1 \\
      -1 & -1
    \end{pmatrix}$}.
  \end{equation*}
  This choice of $L$ corresponds to eliminating $y_{3}$ from~\eqref{eq:simple-ineqs-l} 
  and leads to the same system of inequalities as in Example~\ref{ex:simple-iMB}.  The matrix $LD'$ is equal to the
  matrix $D$ augmented by two rows of zeros.  By Lemma~\ref{lem:ineq-Markov}, the set
  \begin{equation}
    \label{eq:simple-ilMB}
    \Gcal = \Big\{ (1, -1, 0), \quad (1, -2, 1) \Big\}
  \end{equation}
  is a Markov basis of~\eqref{eq:simple-ineqs-l}.  \qed
\end{example}

To construct Gr\"obner bases of toric fiber products, we also need to find \emph{inequality Gr\"obner bases}
for the family~$\Fcin(\LL,D)$.  Such Gr\"obner bases can be computed from lattice Gr\"obner bases, following the same
conversion 
as in the proof of Lemma \ref{lem:ineq-Markov}.  As above we may assume
that $D$ restricted to~$\LL$ has rank~$\dim(\LL)$. 
Otherwise, either no element of $\Fcin(\LL, D)$ has a minimum, or all non-empty elements of $\Fcin(\LL, D)$ have an
infinite number of minima.

\begin{lemma}
  \label{lem:computing-L,D-GBs}
  Let $D \in \zz^{r \times n}$, let $\Lcal\subseteq\Zb^{n}$ be a lattice, let $L$ be an $(n\times t)$-integer
  matrix such that the columns of $L$ are a lattice basis of~$\Lcal$, and assume that $\tilde D=DL\in\Zb^{r\times t}$
  has rank~$t$.
%
  Let $\succeq$ and $\succeq'$ be additive preorders on $\Zb^{n}$ and $\Zb^{r}$ such that for all
  $m_{1},m_{2}\in\Zb^{n}$ with $m_{1}-m_{2}\in\Lcal$,
  \begin{equation*}
    m_{1}\succeq m_{2}
    \quad\text{ if and only if }\quad
    D m_{1}\succeq' D m_{2}.
  \end{equation*}
  \begin{enumerate}
  \item %
    If $\Gcal$ is an $(\Lcal,D,\succeq)$-Gr\"obner basis, then $\Gcal'=D(\Gcal)$ is a
    $\succeq'$-Gr\"obner basis of~$\zz\tilde D$.
  \item %
    If $\Gcal'$ is an $\succeq'$-Gr\"obner basis of $\zz \tilde D$, then $D^{-1}(\Gcal')\cap\Lcal$ is an
    $(\Lcal,D,\succeq)$-Gr\"obner basis.
  \end{enumerate}
\end{lemma}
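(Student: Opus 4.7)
The plan is to upgrade the affine-bijection argument from Lemma~\ref{lem:ineq-Markov} to a directed-graph isomorphism that is compatible with the preorders $\succeq$ and $\succeq'$, so that the three Gr\"obner basis conditions in Definition~\ref{def:MB-GB} transport in both directions. Since $\tilde D = DL$ has rank~$t$, the restriction $D|_{\Lcal}$ is injective onto $\zz\tilde D$, so it is a lattice isomorphism $\Lcal\to\zz\tilde D$. For each $v\in\Zb^{n}$ and $c\in\Zb^{r}$, the proof of Lemma~\ref{lem:ineq-Markov} produces an affine bijection
\[
  \psi_{v,c}:\Fbin(\Lcal,v,D,c)\to\Flat(\zz\tilde D,Dv-c),\qquad u\mapsto Du-c,
\]
and as $(v,c)$ varies, $Dv-c$ takes every value in $\Zb^{r}$, so these bijections exhaust both families $\Fcin(\Lcal,D)$ and $\Fclat(\zz\tilde D)$.

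Next I would show that $\psi_{v,c}$ carries the directed graph $\Fbin(\Lcal,v,D,c)_{\Gcal,\succeq}$ to $\Flat(\zz\tilde D,Dv-c)_{D(\Gcal),\succeq'}$ and conversely for any $\Gcal'\subseteq\zz\tilde D$, the graph $\Flat(\zz\tilde D,Dv-c)_{\Gcal',\succeq'}$ pulls back to $\Fbin(\Lcal,v,D,c)_{D^{-1}(\Gcal')\cap\Lcal,\succeq}$. On the level of undirected edges this is immediate from $\psi_{v,c}(u_{1})-\psi_{v,c}(u_{2})=D(u_{1}-u_{2})$ and the fact that $D|_{\Lcal}\colon\Lcal\to\zz\tilde D$ is a bijection. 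For the directions of the edges, note that any two elements of the same fiber differ by an element of $\Lcal$, so the compatibility hypothesis on $\succeq$ and $\succeq'$ gives $u_{1}\succeq u_{2}\iff Du_{1}\succeq' Du_{2}\iff\psi_{v,c}(u_{1})\succeq'\psi_{v,c}(u_{2})$, where the second equivalence uses additivity of $\succeq'$ (the common shift by $-c$ cancels).

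Once the graph isomorphism is established, both parts of the lemma follow formally: weak connectedness, the number of sinks in the quotient by strong connectivity, and the property ``every sink is a minimum'' are all intrinsic to the directed graph and therefore preserved. For part~(1), given an $(\Lcal,D,\succeq)$-Gr\"obner basis $\Gcal$, the isomorphism for each $(v,c)$ shows that every fiber of $\Fclat(\zz\tilde D)$ has the required properties under the moves $D(\Gcal)$. Part~(2) is symmetric. The main thing to check carefully will be that the two quotient graphs $\Fbin_{\Gcal,\succeq}/{\sim}$ and $\Flat_{D(\Gcal),\succeq'}/{\sim}$ are identified by $\psi_{v,c}$; this is automatic since isomorphisms of directed graphs preserve the strong-connectivity equivalence relation and descend to the quotient. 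The only genuinely new ingredient beyond Lemma~\ref{lem:ineq-Markov} is the order-compatibility assumption, which is precisely what is needed to promote the affine bijection to a $(\succeq,\succeq')$-equivariant one.
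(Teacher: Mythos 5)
Your proposal is correct and follows essentially the same route as the paper's proof: both rely on the affine bijection $u\mapsto Du-c$ from the proof of Lemma~\ref{lem:ineq-Markov} to identify the fiber graphs, and both use the order-compatibility hypothesis to verify that edge directions transport; your write-up simply spells out the $(\succeq,\succeq')$-equivariance more explicitly than the paper's one-line remark.
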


\begin{proof}
  As in the proof of Lemma~\ref{lem:ineq-Markov}, if $\Gcal=L^{-1}(\Gcal')$, then the two graphs
  \begin{equation*}
    \Fbin(\Lcal,v,D,c)_{\Gcal}
    \quad\text{ and }\quad
    \Flat(\zz \tilde{D}, Dv-c )_{\Gcal'}
  \end{equation*}
  are isomorphic as undirected graphs.  The compatibility of the preorders $\succeq$ and $\succeq'$ guarantees that the
  edge directions point to a unique sink, if a sink exists.
\end{proof}



\subsection{Sign-consistency and Graver bases}
\label{sec:Graver-bases}

Markov bases and Gr\"obner bases of lattices are related to Graver bases:
\begin{definition}
  A pair of vectors $v,v'\in\Zb^{n}$ is \emph{sign-consistent}, if $v_{i}v'_{i}\ge 0$ for all~$i=1,\dots,n$.  A sum
  $\sum_{j}v_{j}$ with $v_{j}\in\Zb^{n}$ is a \emph{conformal sum}, if any pair $v_{i},v_{i'}$ of summands is
  sign-consistent.
  
  Let $\Lcal\subseteq\Zb^{n}$ be a lattice.  An element $v\in \Lcal \setminus \{0\}$ 
  is \emph{primitive}, if the following holds: If
  $v=v_{1}+v_{2}$ is a conformal sum with $v_{1}, v_{2} \in \Lcal$ then either $v_{1}=0$ or $v_{2}=0$.  
  The set of all primitive elements is called the
  \emph{Graver basis} of~$\Lcal$.
  Alternatively, the Graver basis can be defined as the unique
  minimal subset $\Gcal_{0}\subset\Lcal$ such that any element of $\Lcal$ can be written as a conformal sum of elements
  of~$\Gcal_{0}$.
\end{definition}
Sign-consistency is an important tool to remove redundant elements from Gr\"obner bases:
\begin{lemma}
  \label{lem:conformal-redundancy}
  Let $\Gcal$ be a $\succeq$-Gr\"obner basis of a lattice~$\Lcal$.  If $v,v_{1},v_{2}\in\Gcal\setminus\{0\}$ and if $v =
  v_{1}+v_{2}$ is a conformal sum, then $\Gcal\setminus\{v\}$ is also a $\succeq$-Gr\"obner basis.
\end{lemma}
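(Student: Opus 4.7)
Plan: The strategy is that every edge using the move $v$ admits a two-step ``detour'' through $v_1$ and $v_2$, and this detour can be made compatible with $\succeq$. The underlying geometric fact is that conformality of $v=v_1+v_2$ forces $v^+ = v_1^+ + v_2^+$ and $v^- = v_1^- + v_2^-$ (no cancellation between the positive parts or between the negative parts). Hence for any $u\in\Nb^n$ with $u-v\in\Nb^n$, the bound $u\ge v^+$ already gives $u\ge v_1^+$, so $u-v_1\in\Nb^n$, and then $u-v = (u-v_1)-v_2\in\Nb^n$ as well. Thus in each fiber $\Fbf\in\Fcal$ containing the $v$-edge $\{u,u-v\}$, the vertex $u-v_1$ also lies in $\Fbf$, and both $\{u,u-v_1\}$ and $\{u-v_1,u-v\}$ are edges of $\Fbf_{\Gcal\setminus\{v\}}$. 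This immediately yields the Markov basis property.

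For the Gr\"obner property I would invoke Lemma~\ref{lem:Markov-is-Grb} and check its two extra conditions. After possibly replacing $v$ by $-v$ (which does not alter the fiber graph, and preserves conformality of the decomposition), assume $v\succeq 0$. A standard fact about additive preorders gives that $a\succ 0$ and $b\succeq 0$ imply $a+b\succ 0$; contrapositively, $v\succeq 0$ rules out having both $v_1\prec 0$ and $v_2\prec 0$, so without loss of generality $v_1\succeq 0$.

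To verify condition~(2) of Lemma~\ref{lem:Markov-is-Grb}, take the descending path from $u$ guaranteed by $\Gcal$ and walk along it until the first $v$-edge $u_j\to u_j-v$. If $v_1\succ 0$ strictly, then $u_j\to u_j-v_1$ is already a strict descent from $u$ in $\Fbf_{\Gcal\setminus\{v\},\succeq}$, and we are done. If $v_1\sim 0$, then $v_2\sim v\succeq 0$, so both edges of the detour $u_j\to u_j-v_1\to u_j-v$ are valid in $\Fbf_{\Gcal\setminus\{v\},\succeq}$; we replace the $v$-edge by this detour and recurse on the suffix of the path. Condition~(1) (connectedness of $\succeq$-minima) is handled by the same mechanism: any path between two minimal vertices lies entirely in the minimum $\sim$-class, and minimality of $u_j$ combined with $u_j-v_1\in\Fbf$ forces $v_1\preceq 0$, hence $v_1\sim 0$; symmetrically $v_2\sim 0$; so the detour stays inside the minimum $\sim$-class.

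The main obstacle is the borderline case $v_1\sim 0$, where the detour is not a strict descent and one has to reason in the quotient $\Fbf_{\Gcal\setminus\{v\},\succeq}/{\sim}$ rather than in $\Fbf_{\Gcal\setminus\{v\},\succeq}$ itself; the case split above is designed precisely to isolate that case. As a sanity check, if one additionally assumes that $0$ is the smallest element of $\Nb^n$ and passes to Corollary~\ref{cor:MBthm-for-lattice-ideals}, the statement reduces to the standard polynomial Gr\"obner basis fact that a generator whose leading monomial is divisible by another's may be removed; the conformal identity $v^+ = v_1^+ + v_2^+$ exhibits exactly such a divisibility of leading monomials.
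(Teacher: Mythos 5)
Your proposal is correct and follows the same approach as the paper's (rather terse) proof: replace each $v$-edge of a fiber graph by a two-step detour through $v_1$ and $v_2$, using conformality of the decomposition to guarantee the intermediate point stays in the fiber, and additivity of the preorder to argue that at least one of $v_1,v_2$ is $\succeq 0$. The main difference is that you make explicit the case split (between $v_1\succ 0$, where truncation already gives a strict descent, and $v_1\sim 0$, where the detour is monotone) and the appeal to Lemma~\ref{lem:Markov-is-Grb}, both of which the paper leaves implicit in the single sentence ``Moreover, $u+v_{1}\preceq u$ or $u+v_{2}\preceq u$ (or both).''
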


\begin{proof}
  Suppose that $u\in\Flat(\Lcal,w)$, $u+v\in\Flat(\Lcal,w)$ with $u+v\preceq u$.  Then $u+v_{1}, u+v_{2}\in\Flat(\Lcal,w)$,
  and so $\Gcal$ connects $\Flat(\Lcal,w)$. Moreover, $u+v_{1}\preceq u$ or $u+v_{2}\preceq u$ (or both).
\end{proof}

The argument in the lemma shows that the Graver basis of~$\Lcal$ is also a Gr\"obner basis for any additive
preorder.  In this sense, a Graver basis is a universal Gr\"obner basis (however, in general there may be smaller
universal Gr\"obner bases, see~\cite[Chapter 4]{Sturmfels1996:GBCP}).  In particular, any minimal
Gr\"obner basis consists of primitive vectors.

The concept of a Graver basis is tied to the coordinate hyperplanes.  Therefore, there is no natural concept of an
inequality Graver basis, or a Graver basis of a more general family of sets.  Still, Graver bases play a role when
computing Markov bases.  Namely, there are some lattices for which the Markov basis is in fact a Graver basis.  In such
cases, to compute such a basis,  it may be faster to use the program \verb|graver| instead of the program \verb|markov| (both programs belong to \verb|4ti2|).

\begin{lemma}
  \label{lem:MB-is-GB}
  If a matrix $L$ is of the form $\binom{\hat L}{-\hat L}$,
  then the Graver basis of~$\zz L$ is a minimal Markov basis of~$\zz L$.
\end{lemma}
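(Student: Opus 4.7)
The plan is to first invoke the discussion after Lemma~\ref{lem:conformal-redundancy}, which already establishes that the Graver basis of any lattice is a Markov basis. What remains is to verify minimality: every Graver basis element of $\zz L$ is indispensable, in the sense that some fiber graph becomes disconnected if this element (and its negative) are removed.

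The first step is to pin down the structure of the Graver basis of $\zz L$. Because of the block form $L=\binom{\hat L}{-\hat L}$, every element of $\zz L$ has the form $\binom{w}{-w}$ with $w\in\zz\hat L$, and two such elements are sign-consistent in $\zz^{2n}$ iff their top halves are sign-consistent in $\zz^{n}$; the condition on the bottom halves $-w_1,-w_2$ is equivalent. Hence $\binom{w}{-w}$ is primitive in $\zz L$ exactly when $w$ is primitive in~$\zz\hat L$.

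Next, fix a primitive element $v=\binom{w}{-w}$ of $\zz L$ and consider the fiber $\Flat(\zz L,v^{+})$. Its elements are precisely the points $v^{+}-\binom{w'}{-w'}$ with $w'\in\zz\hat L$ satisfying the box constraints $-w^{-}\le w'\le w^{+}$. The key claim is that this intersection of box and lattice is just $\{0,w\}$. For any such $w'$, the inequalities rewrite as $(w')^{+}\le w^{+}$ and $(w')^{-}\le w^{-}$; subtracting from $w$ gives analogous inequalities for $w-w'$. A coordinate-wise case split on the sign of $w_j$ (distinguishing $w_j>0$, $w_j<0$, $w_j=0$) then shows that both $w'$ and $w-w'$ are sign-consistent with $w$, and in particular with each other, so that $w=w'+(w-w')$ is a conformal decomposition in $\zz\hat L$. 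Primitivity of $w$ forces $w'\in\{0,w\}$.

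It follows that $\Flat(\zz L,v^{+})=\{v^{+},v^{-}\}$, and the only move in $\pm\zz L$ connecting these two fiber elements is $\pm v$. Hence any Markov basis of $\zz L$ must contain $v$ up to sign, proving the asserted minimality. The only routine labor is the coordinate-wise sign-consistency check; the conceptual heart of the argument is the reduction of primitivity in $\zz L$ to primitivity in $\zz\hat L$, which then feeds directly into the two-point-fiber observation.
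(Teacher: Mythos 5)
Your proof is correct, but it takes a genuinely different route from the paper's. The paper's proof is a one-line citation: it observes that $\zz L$ is a lattice of Lawrence type (consisting of vectors $(u,-u)$) and appeals to \cite[Thm.~7.1]{Sturmfels1996:GBCP}, which states that for such lattices the Graver basis, the universal Gr\"obner basis, and any minimal Markov basis coincide. Your argument, by contrast, is self-contained and essentially reconstructs the core mechanism behind that theorem: you identify $\zz L$ with $\{(w,-w):w\in\zz\hat L\}$, show that primitivity transfers between $\zz L$ and $\zz\hat L$, and then prove the decisive structural fact that the fiber $\Flat(\zz L,v^{+})$ of a primitive $v=(w,-w)$ is exactly the two-point set $\{v^{+},v^{-}\}$, because the box constraints $-w^{-}\le w'\le w^{+}$ force any feasible $w'\in\zz\hat L$ to give a conformal decomposition $w=w'+(w-w')$, so primitivity leaves only $w'\in\{0,w\}$. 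That two-element fiber forces $\pm v$ into every Markov basis. What the paper's route buys is brevity; what yours buys is transparency and independence from the external reference. One cosmetic caveat that affects both versions equally: the Graver basis as defined in the paper contains both $v$ and $-v$, so ``minimal'' should be read modulo sign (a set $\Mcal$ and $\Mcal\cup(-\Mcal)$ induce the same fiber graphs); your phrase ``up to sign'' already acknowledges this.
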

\begin{proof}
  Recall that a lattice $\LL$ is of Lawrence type, if it consists of vectors of the form $(u,-u)$.  Any lattice of
  Lawrence type satisfies the conclusion of the lemma (e.g.~\cite[Thm.~7.1]{Sturmfels1996:GBCP}).  If $L$ has the
  indicated form, then $\zz L$ is of Lawrence type.
\end{proof}


\section{Lifting Markov and Gr\"{o}bner bases}
\label{sec:Lifting-MB-GB}

As mentioned in the previous section, Markov and Gr\"obner bases of lattices can be computed using the
software~\verb|4ti2|.  
For larger examples, the algorithms implemented in \verb|4ti2| may not terminate within a
reasonable time.  In this section we discuss an idea that allows to compute a larger Gr\"obner basis by lifting a
Gr\"obner basis that lives in lower dimensions.  For this idea to be useful, it is necessary to control both the smaller
Gr\"obner basis as well as the lifting procedure.  Later, we will apply the lifting procedure to the toric fiber
product, where the lifting procedure can be simplified using the special structure of the product.
Lifting procedures appear in special cases in 
\cite{HemmeckeMalkin09:Generating_Sets_Lattice_Ideals,KahleKroneLeykin14:Equivariant_Markov_bases,Shibuta12:GB_contraction_ideals}.

\begin{definition}
  \label{def:compatible-preorders}
  Let $\Fcal$ be a collection of subsets of $\zz^{n}$,
  let $\phi:\Zb^{n}\to\Zb^{t}$ be a linear map, and let $\succeq$ and $\succeq'$ be two additive preorders on $\Zb^{n}$ and $\Zb^{t}$.
  We say that $\succeq$ and $\succeq'$ are \emph{compatible} with $\phi$ and~$\Fcal$, if the following holds:
  \begin{itemize}
  \item For all $u,v\in\Fbf\in\Fcal$, if $\phi(u)\neq \phi(v)$, then $\phi(u)\succeq'\phi(v)$ if and only if $u\succeq v$.
  \end{itemize}
\end{definition}
In other words, $\succeq'$ determines $\succeq$ as soon as different fibers of~$\phi$ are involved.  In particular,
$\phi$ is (weakly) monotone; that is, if $u\succeq v$, then $\phi(u)\succeq'\phi(v)$.  As an example, the preorders
$\succeq_{0}$ on $\Zb^{n}$ and $\Zb^{t}$ are always compatible.

Observe the following indirect effect: If $u,u',v\in\Fbf\in\Fcal$
satisfy $\phi(u)=\phi(u')\succeq'\phi(v)\succeq'\phi(u)$ and $\phi(u)\neq\phi(v)$, then the compatibility condition implies $u\succeq v\succeq u$
and $u'\succeq v\succeq u'$, and thus $u\succeq u'\succeq u$.  Therefore, in this case, there is only one unique
preorder $\succeq$ that is compatible with~$\succeq'$.  This effect does not occur if $\succeq'$ is an order (and not
just a preorder).



If $\Fcal$ is a collection of subsets of~$\Zb^{n}$, then $\phi(\Fcal)
= \{\phi(\Fbf) : \Fbf\in\Fcal\}$ is the set of images of those subsets under the linear map~$\phi$.

\begin{definition}
  \label{def:lift}
  Let $\phi:\Zb^{n}\to\Zb^{t}$ be a linear map, let $\succeq$ be an additive preorder on~$\Zb^{n}$, and let $\Fcal$ be a
  collection of subsets of $\Zb^{n}$.  A \emph{$(\Fcal, \phi,\succeq)$-lift} of~$\Gcal\subseteq\Zb^{t}$ is a set
  $\Mcal\subset\Zb^{n}$ such that for all $\Fbf\in\Fcal$ and $v,v'\in\Fbf$ with $v\succeq v'$ that satisfy $\phi(v-v')\in\Gcal$ there are
  $m_{0}\in\Zker\phi$ 
  and $m\in\Mcal$ such that $v$, $v+m_{0}$, $v+m_{0}+m$ is a $\succeq$-non-increasing path in $\Fbf$ with
  $\phi(v+m_{0}+m)=\phi(v')$.
\end{definition}  
In other words, we can move from $v$ to $v'$ by applying first a move $m_{0}$ from the kernel of~$\phi$, then a move $m$
from the lift, and finally again a move $v'-(v+m_{0}+m)$ from the kernel of~$\phi$.  Apart from the last step, all other
steps should be non-increasing.
  If $\Fcal$ and $\succeq$ are understood from the context, we simply speak of a \emph{$\phi$-lift}.

\begin{figure}
  \centering
  \begin{tikzpicture}
    \foreach \i in {1,...,4} { \foreach \j in {1,...,4} { \path (\i,\j) coordinate (X\i\j); } }
    \begin{scope}[lightgray] 
      \draw (X11) -- (X12) -- (X13) -- (X14); \draw (X21) -- (X22) -- (X23) -- (X24); \draw (X31) -- (X32) -- (X33) --
      (X34); \draw (X41) -- (X42) -- (X43) -- (X44);
    \end{scope}
    \begin{scope}[lightgray] 
      \draw (X13) -- (X22) -- (X31);
    \end{scope}
    \foreach \i in {1,...,4} { \foreach \j in {1,...,4} { \fill (X\i\j) circle (3pt); } }
    \draw (X21) edge [bend right=30,red,->,>=triangle 60,shorten >=3pt] (X41);
    \path[->,>=stealth] (2.5,0.5) edge node[left] {$\phi$} (2.5,-0.5);
    \foreach \i in {1,...,4} { \path (\i,-1) coordinate (Y\i); }
    \draw[lightgray] (2,-1) edge (1,-1) edge (3,-1); 
    \foreach \i in {1,...,4} { \fill (Y\i) circle (3pt); }
    \draw (2,-1) edge[bend right=30,red,->,>=triangle 60,shorten >=3pt] node[below] {$g$} (4,-1); 
    \begin{scope}[red]
      \draw (X23) edge[bend left=20,->,>=triangle 60,shorten >=3pt] node[right] {$m_{0}$} (X21);
      \node at (3,0.5) {$m$};
      \draw (X44) edge[bend left=20,->,>=triangle 60,shorten >=3pt] (X41);
      \draw[dashed] (X23) edge (X44);
      \fill (X23) circle (2pt);
      \fill (X44) circle (2pt);
      \node at (1.7,3) {$v$};
      \node at (4.4,4) {$v'$};
      \fill (Y2) circle (2pt);
      \fill (Y4) circle (2pt);
    \end{scope}
  \end{tikzpicture}
  
  \caption{An illustration of the definition of a $\phi$-lift.  The arrows point in the direction of $\succeq$-smaller nodes.}
  \label{fig:lifting}
\end{figure}
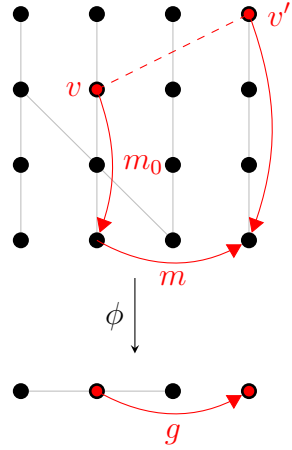
Figure~\ref{fig:lifting} illustrates the definition.  We will later see that lifts exist in the setting of Gr\"obner
bases of lattices (Section~\ref{sec:lifting-GBs-algo}).

Lifting allows us to relate certain Gr\"obner bases in $\Zb^{n}$ and~$\Zb^{t}$.  In order to state this correspondence
precisely, the following definitions are needed:

\begin{definition}
  Let $\Fcal$ be a family of subsets of $\Zb^{n}$, and let $\phi:\Zb^{n}\to\Zb^{t}$ be a linear map.
  Let $\succeq$ and $\succeq'$ be additive preorders on $\Zb^{n}$ and $\Zb^{t}$, respectively. 
  A $\succeq'$-Gr\"obner basis of $\phi(\Fcal)$ is called a \emph{projected fiber Gr\"obner basis} (\emph{PF Gr\"obner basis}).
  A \emph{kernel Gr\"obner basis} is a $\succeq$-Gr\"obner basis of the family of sets of the form
  \begin{equation*}
    \Fbf  \cap (u + \ker \phi), \quad\text{ for $\Fbf\in\Fcal$ and $u\in\Zb^{n}$;}
  \end{equation*}
  that is, the fibers of $\phi$ restricted to some $\Fbf\in\Fcal$.
\end{definition}

\begin{theorem}
  \label{thm:GB-lifting}
  Let $\Fcal$ be a family of subsets of $\Zb^{n}$, and let $\phi:\Zb^{n}\to\Zb^{t}$ be a linear map.
  Let $\succeq$ and $\succeq'$ be additive preorders on $\Zb^{n}$ and $\Zb^{t}$, respectively, that are compatible with~$\phi$. 
  Let $\Gcal$ be a 
  PF Gr\"obner basis, and let $\Mcal_{1}$ be a $(\Fcal, \phi,\succeq)$-lift of $\Gcal$.
  Let $\Mcal_{0}$ be a kernel Gr\"obner basis.
  Then $\Mcal_{0}\cup\Mcal_{1}$ is a $\succeq$-Gr\"obner basis of~$\Fcal$.
\end{theorem}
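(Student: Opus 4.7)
The plan is to verify the three conditions of Definition~\ref{def:MB-GB} for $\Mcal := \Mcal_0 \cup \Mcal_1$ on each fiber $\Fbf \in \Fcal$. In the finite-fiber setting noted after that definition, it is enough to show that from every $v \in \Fbf$ there is a $\succeq$-non-increasing $\Mcal$-path in $\Fbf_{\Mcal,\succeq}$ to some $\succeq$-minimum of $\Fbf$; this forces the sink of $\Fbf_{\Mcal,\succeq}/{\sim}$ to be unique and to coincide with the $\sim$-class of the $\succeq$-minima, yielding~(1), (2), and~(3) simultaneously.

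The argument is by well-founded induction on $v$ with respect to the strict preorder $\prec$ on $\Fbf$. Set $K_v := \Fbf \cap (v+\Zker\phi)$. If $v$ is not $\succeq$-minimal in $K_v$, then Lemma~\ref{lem:Markov-is-Grb} applied to the kernel Gr\"obner basis $\Mcal_0$ produces an $\Mcal_0$-path in $K_v$ from $v$ to some $u \prec v$, and induction concludes. Otherwise $v$ is a $\succeq$-minimum of $K_v$; if $v$ is also not a $\succeq$-minimum of $\Fbf$, choose $v^* \prec v$ in $\Fbf$, observe $\phi(v^*) \neq \phi(v)$, and conclude $\phi(v) \succ' \phi(v^*)$ from compatibility. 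Then $\phi(v)$ is not $\succ'$-minimal in $\phi(\Fbf)$, and Lemma~\ref{lem:Markov-is-Grb} applied to the PF Gr\"obner basis $\Gcal$ produces a path $\phi(v) = w_0, w_1, \ldots, w_s$ in $\phi(\Fbf)_{\Gcal,\succeq'}$ with $w_s \prec' \phi(v)$ strictly.

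I then lift this PF-path one edge at a time, keeping the invariant that the current preimage $v_i \in \Fbf$ of $w_i$ (starting from $v_0 = v$) satisfies $v_i \sim v$ and is $\succeq$-minimal in $K_{v_i}$. At step $i$, I apply Definition~\ref{def:lift} to the edge $w_i \to w_{i+1}$ — orienting signs via compatibility so that $\phi(v_i - v') \in \Gcal$ and $v_i \succeq v'$ for some preimage $v'$ of $w_{i+1}$, assuming for convenience that $\Gcal$ is closed under negation — obtaining $m_{0,i} \in \Zker\phi$ and $m_i \in \Mcal_1$ so that $v_i, v_i+m_{0,i}, v_i+m_{0,i}+m_i$ is a $\succeq$-non-increasing path in $\Fbf$ over $w_{i+1}$. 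Since $v_i+m_{0,i} \preceq v_i$ lies in $K_{v_i}$ and $v_i$ is $\succeq$-minimal there, both points are $\succeq$-minima of $K_{v_i}$, so by conditions~(2) and (3) for $\Mcal_0$ both lie in the unique sink $\sim$-class of $K_{v_i,\Mcal_0,\succeq}/{\sim}$, which is strongly connected and supplies a $\sim$-constant $\Mcal_0$-walk realising the abstract jump $v_i \to v_i+m_{0,i}$. Setting $v_{i+1} := v_i+m_{0,i}+m_i$, compatibility yields $v_{i+1} \sim v$ when $w_{i+1} \sim' w_i$ (iteration continues) and $v_{i+1} \prec v$ strictly when $w_{i+1} \prec' w_i$ (induction concludes); if instead $v_{i+1}$ fails to be $\succeq$-minimal in $K_{v_{i+1}}$, the earlier Case~A descent produces $u \prec v_{i+1} \preceq v$ and again closes induction. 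Because $w_s \prec' w_0$, a strict drop must eventually occur, so the procedure terminates.

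The main obstacle is this preorder bookkeeping, namely keeping the current preimage $\succeq$-minimal in its kernel fiber so that the abstract kernel jumps $m_{0,i}$ can be realised as actual $\Mcal_0$-walks inside a sink class. A secondary point is the sign convention in Definition~\ref{def:lift}, handled cleanly by closing $\Gcal$ under negation. In the term-order setting of Remark~\ref{rem:weight}, where $\succeq'$ is a strict order, every PF-edge is a strict drop and the iteration collapses to lifting a single edge.
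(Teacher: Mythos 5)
Your opening reduction is not correct: knowing that from every $v\in\Fbf$ there is a $\succeq$-non-increasing $\Mcal$-path to \emph{some} $\succeq$-minimum does not force $\Fbf_{\Mcal,\succeq}/{\sim}$ to have a unique sink. Two $\succeq$-minima that lie in different strongly connected components both give sinks, and your induction, once it reaches a minimum, simply stops and says nothing linking distinct minima. Concretely, take $\Fbf=\{a,b,c\}\subset\Zb^{2}$ with $a=(1,0)$, $b=(0,0)$, $c=(0,1)$, $\fkc=(1,0)$, and $\Mcal=\{\pm(a-b),\pm(a-c)\}$: then $a\succ_{\fkc} b\sim_{\fkc} c$, every vertex reaches a minimum by a non-increasing $\Mcal$-path, yet the quotient has two sinks $\{b\}$ and $\{c\}$. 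So "every $v$ reaches a minimum'' is strictly weaker than the Gr\"obner basis property. What is missing is precisely condition~(1) of Lemma~\ref{lem:Markov-is-Grb}: any two $\succeq$-minima $m,m'$ of $\Fbf$ are joined by a directed (hence $\succeq$-constant) $\Mcal$-path. This is exactly what the paper's proof handles first, and the argument is the same lifting mechanics you already use in Case~B: $\phi(m),\phi(m')$ are both $\succeq'$-minimal in $\phi(\Fbf)$, so by the PF Gr\"obner basis there is a $\succeq'$-constant $\Gcal$-path from $\phi(m)$ to $\phi(m')$; lift it edge by edge (your invariant keeps the preimage minimal in each $K_{v_i}$, so the kernel jumps land in the sink $\sim$-class of $(K_{v_i})_{\Mcal_0,\succeq}$ and can be realised as $\Mcal_0$-paths), reaching some $u$ with $\phi(u)=\phi(m')$, and finish with an $\Mcal_0$-path between the minima $u,m'$ of $K_{m'}$.

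Aside from this gap, your induction is the same route the paper takes for condition~(2) of Lemma~\ref{lem:Markov-is-Grb}, and your Case~A/Case~B split — reducing to the situation where the current preimage is $\succeq$-minimal in its kernel fiber before applying Definition~\ref{def:lift} — makes explicit a point the paper glosses over: the lift hands you $m_{0}\in\Zker\phi$, not a sequence of $\Mcal_{0}$-moves, and the step $v_{i}\to v_{i}+m_{0}$ can only be realised as an actual $\Mcal_{0}$-path because both endpoints lie in the unique, strongly connected sink class of $(K_{v_i})_{\Mcal_0,\succeq}/{\sim}$. So once you add the "two minima connect'' argument, your write-up is complete and arguably cleaner than the original; do note, though, that your well-founded induction implicitly assumes $\prec$ is well-founded on each $\Fbf$ (finite fibers or $\succeq$ a well-ordering), which is consistent with the remark after Definition~\ref{def:MB-GB} but should be stated.
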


\begin{proof}
  Let $\Mcal:=\Mcal_{0}\cup\Mcal_{1}$.  We want to apply Lemma~\ref{lem:Markov-is-Grb}.  First, we show that
  $\Mcal$ is a Markov basis.  Let $u,v\in\Fbf\in\Fcal$.  Since $\Gcal$ is a $\succeq'$-Gr\"obner basis for
  $\phi(\Fcal)$, there are $g_{1},\dots,g_{r} \in\pm\Gcal$ such that $\phi(u)$, $\phi(u) + g_{1}$, \dots, $\phi(u) + g_{1}
  + \dots + g_{r}=\phi(v)$ is a path in~$\phi(\Fbf)$ from $\phi(u)$ to~$\phi(v)$.  Using Definition~\ref{def:lift},
  this path lifts to a path in~$\Fbf$ from $u$ to~$v$ with moves in~$\pm\Mcal$.

  Next, we show that the two conditions of Lemma~\ref{lem:Markov-is-Grb} are satisfied.  The argument is similar, but
  now we need to take into account the preorders.
  For the first condition, let $u,v\in\Fbf$ be $\succeq$-minimal in~$\Fbf$.  Since $\phi$ is monotone,
  $\phi(u),\phi(v)$ are $\succeq'$-minimal in~$\phi(\Fbf)$.  Since $\Gcal$ is a
  $\succeq'$-Gr\"obner basis for $\phi(\Fcal)$, there are $g_{1},\dots,g_{r} \in\pm\Gcal$ such that $\phi(u)$, $\phi(u) +
  g_{1}$, \dots, $\phi(u) + g_{1} + \dots + g_{r}$ is a non-increasing (with respect to $\succeq'$) path in~$\phi(\Fbf)$
  with $\phi(u) + g_{1} + \dots + g_{r} = \phi(v)$.  Since $\Mcal_{1}$ is a $(\Fcal, \phi, \succeq)$-lift,
  we can use moves from $\Mcal_{0}$ and $\Mcal_{1}$ to lift this path to a non-increasing path in~$\Fbf$
  as follows: In the first step, since $\phi(u)+g_{1}\in\phi(\Fbf)$, there exists $\tilde u_{1}\in\Fbf$ with
  $\phi(\tilde u_{1})=\phi(u)+g_{1}$.  Let $m,m_{0}$ be as in Definition~\ref{def:lift} applied to~$u,\tilde u_{1}$ in
  place of~$v,v'$.  Then there is a non-increasing path from $u$ to $u+m_{0}$ using moves from~$\Mcal_{0}$.  Adding the
  move~$m\in\Mcal_{1}$, we obtain a non-increasing path from~$u$ to $u_{1}:=u+m_{0}+m\in\Fbf$, where $u_{1}$ satisfies
  $\phi(u_{1})=u+g_{1}$.  Iterating this procedure, we obtain a non-increasing path in~$\Fbf$ with edges in~$\Mcal$ that
  starts in $u$ and ends in some~$u_{r}$, with $\phi(u_{r})=\phi(v)$.  Since the path is non-increasing, $u_{r}$ is also
  $\succeq$-minimal.  Therefore, we can concatenate a non-increasing path from $u_{r}$ to~$v$ using moves
  in~$\Mcal_{0}$.  This shows the first condition.

  For the second condition, assume that there is $v\in\Fbf$ with~$v\prec u$.  If $\phi(u)$ is $\succeq'$-minimal
  in~$\phi(\Fbf)$, then so is $\phi(v)$, and there is a $\succeq'$-non-increasing path from $\phi(u)$ to~$\phi(v)$.  As
  above, this path lifts to a non-increasing path from $u$ to some~$u'$ with $\phi(u')=\phi(v)$.  If $u'\prec u$, then
  we are done.  Otherwise, $v\prec u'$, and so there is a non-increasing path in $\Fbf\cap(v+\ker_{\Zb}\phi)$ with moves
  in $\Mcal_{0}$ from~$u'$ to some~$u''$ with~$u''\prec u'$, since $\Mcal_{0}$
  is a kernel Gr\"obner basis.   Joining these two non-increasing paths proves the
  statement.  If $\phi(u)$ is not $\succeq'$-minimal, then 
there are
  $g_{1},\dots,g_{r} \in \Gcal$ such that $\phi(u)$, $\phi(u) + g_{1}$, \dots, $\phi(u) + g_{1} + \dots + g_{r}$ is a
  non-increasing (with respect to $\succeq'$) path in~$\phi(\Fbf)$ with $\phi(u) + g_{1} + \dots + g_{r} \prec \phi(u)$.
  Again, this path can be lifted and yields a non-increasing path in~$\Fbf$ with edges in~$\Mcal$ that starts in~$u$ and
  ends in a point~$u'$ with $\phi(u')=\phi(u)+g_{1}+\dots+g_{r}\prec\phi(u)$.  Since $\succeq$ and $\succeq'$ are
  compatible, $u'\prec u$.
\end{proof}



In some instances we will encounter later, it can be more straightforward to check the following
more demanding condition than  $\Mcal$ being a lift of $\Gcal$.

\begin{lemma}
  Let $\Gcal\subseteq\Zb^{t}$, let $\Lcal\subseteq\Zb^{n}$ be a lattice, and let $\Mcal\subseteq\Zb^{n}$.  Assume
  that for any $g\in\Gcal$ and $m\in\Lcal$ with $\phi(m)=g$, there is a sign-consistent
  decomposition $m=m_{0}+m_{1}$ with $m_{1}\in\pm\Mcal$ and $\phi(m_{0})=0$.
  Then $\Mcal$ is a $(\Fclat(\Lcal),\phi,\succeq_{0})$-lift of~$\Gcal$.
\end{lemma}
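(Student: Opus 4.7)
The plan is to unpack Definition~\ref{def:lift} in the specific case $\Fcal = \Fclat(\Lcal)$ and ${\succeq} = {\succeq_0}$, and then read the hypothesis essentially directly. Since $\succeq_{0}$ is the trivial preorder, the monotonicity requirement on the path is automatic, so the only nontrivial conditions are that $m_0 \in \ker_{\Zb}\phi$, $m \in \pm\Mcal$, that the intermediate vertex $v + m_0$ lies in $\Fbf$ (i.e., in $\Nb^n \cap (\Lcal + u)$ where $\Fbf = \Flat(\Lcal,u)$), and that $\phi(v+m_0+m) = \phi(v')$.

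Fix $\Fbf = \Flat(\Lcal,u)$ and $v,v'\in\Fbf$ with $\phi(v-v') \in \Gcal$. Set $m := v - v' \in \Lcal$. The hypothesis (applied to $g = \phi(m) \in \Gcal$) yields a sign-consistent decomposition $m = m_0 + m_1$ with $m_1 \in \pm\Mcal$ and $\phi(m_0) = 0$; since $\Mcal \subseteq \Lcal$ (as is implicit, otherwise the statement is vacuous: a move $m_1 \notin \Lcal$ could not appear in an actual fiber path), we also have $m_0 \in \Lcal$. I would then set
\begin{equation*}
  \tilde m_0 := -m_0 \in \ker_{\Zb}\phi, \qquad \tilde m := -m_1 \in \pm\Mcal,
\end{equation*}
so that $v + \tilde m_0 + \tilde m = v - m = v'$ and in particular $\phi(v + \tilde m_0 + \tilde m) = \phi(v')$.

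The only step that requires verification is that the intermediate vertex $v + \tilde m_0 = v - m_0$ belongs to~$\Fbf$. Membership in $\Lcal + u$ is immediate from $m_0 \in \Lcal$, so the real content is coordinatewise nonnegativity, and this is exactly where sign-consistency pays off. I would argue component by component: for each index $i$, if $m(i) \ge 0$ then sign-consistency forces $0 \le m_0(i) \le m(i)$, so $v(i) - m_0(i) \ge v(i) - m(i) = v'(i) \ge 0$; if $m(i) \le 0$ then $m_0(i) \le 0$, so $v(i) - m_0(i) \ge v(i) \ge 0$. Either way $v - m_0 \in \Nb^n$, which completes the check.

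The argument is essentially a direct translation; there is no hard step to speak of. The only minor point of care is the sign bookkeeping between $m$ and $\tilde m = -m_1$, and the observation that sign-consistency of the decomposition is precisely the combinatorial property needed to keep the halfway point inside the nonnegative orthant. Since $\succeq_0$ imposes no descent condition, no further compatibility between the ordering and the lift needs to be checked.
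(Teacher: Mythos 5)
Your proof is correct and takes essentially the same approach as the paper's: set $m$ to the difference of $v$ and $v'$, decompose via the hypothesis, and use sign-consistency coordinatewise to verify that the intermediate vertex $v - m_{0}$ remains in $\Nb^{n}$. The only cosmetic difference is your sign convention ($m = v - v'$ rather than the paper's $m = v' - v$), which has the mild advantage of applying the hypothesis with $\phi(m) = g$ directly; your remark that $\Mcal \subseteq \Lcal$ is implicitly needed for the intermediate vertex to lie in the lattice coset is a fair observation.
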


\begin{proof}
  Let $v,v'\in\Flat(\Lcal,v)$ with $\phi(v-v')=g$, and decompose $m=v'-v$ as in the statement of the lemma.  The
  sign-consistency condition implies that $v+m_{0}\in\Flat(\Lcal,v)$, and so $m=m_{0}+m_{1}+0$ is a decomposition of
  $v'-v$ as in the definition of a lift.
\end{proof}

\bigskip%
To apply Theorem~\ref{thm:GB-lifting} to compute a Gr\"obner basis of
$\Fcal$, the following needs to be done:
\begin{enumerate}
\item Compute a kernel Gr\"obner basis $\Mcal_{0}$.
\item Compute a PF Gr\"obner basis $\Gcal$ of $\phi(\Fcal)$.
\item Compute a lift $\Mcal_{1}$ of $\Gcal$.
\end{enumerate}
We discuss these three points in the special case $\Fcal=\Fcal(\Bcal)$ 
for some integer matrix~$\Bcal$. 

The first point is the easiest: In fact,
in this context, a kernel Gr\"obner basis is nothing but a Gr\"obner basis of the lattice $\Zker\Bcal\cap\Zker\phi$.
The lattice $\Zker\Bcal\cap\Zker\phi$ can also be described as the integer kernel of the matrix $\Bcal^{\phi}$ with columns
\begin{equation*}
  \binom{b_{i}}{\phi(e_{i})},
  \qquad
  \text{ where $b_{i}$ denotes the $i$th column of $\Bcal$ and $e_{i}$ the $i$th unit vector.}
\end{equation*}

Before discussing the other two points, let us give another interpretation to 
$\Bcal^{\phi}$.
%
Given $\Bcal$ and $\phi$ as above, let $\phi'$ be the linear map corresponding to~$\Bcal^{\phi}$.  In the following we
only care about how $\phi$ acts on each fiber.  Now, lifting along $\phi$ is essentially the same as lifting
along~$\phi'$, since both maps have the same kernel Gr\"obner bases, and the projected fiber Gr\"obner bases are
equivalent.  Moreover, the linear map $\psi_{\Bcal}$ corresponding to $\Bcal$ factorizes through~$\phi'$ (to be precise,
$\psi_{\Bcal}$ arises from $\phi'$ by composition with a coordinate projection).  Therefore, in our study of lifting, we
could restrict attention to linear maps $\phi:\Zb^{n}\to\Zb^{t}$ that are factors of~$\psi_{\Bcal}$,
and in this case, $\Bcal^{\phi}$ is nothing but a matrix that represents~$\phi$.


\subsection{Gr\"obner bases of projected fibers}
\label{sec:proj-GBs}

Let $u\in\phi(\Fbf(\Bcal, b))$, and let $v\in\Fbf(\Bcal, b)$ such that $u=\phi(v)$.  Then $\Bcal^{\phi} v =
\binom{b}{u}$.  Conversely, if $\binom{b}{u}$ lies in the affine semigroup~$\nn\Bcal^{\phi}$,
then $u$ lies in $\phi(\Fbf(\Bcal, b))$.  In other words, descriptions of the projected fibers $\phi(\Fbf(\Bcal, b))$
can be obtained from suitable descriptions of~$\nn\Bcal^{\phi}$.

Let $N\Bcal^{\phi} := (\Zb\Bcal^{\phi}\cap\Rb_{\ge}\Bcal^{\phi})$ be the \emph{normalization} of~$\Nb\Bcal^{\phi}$.
Elements of $N\Bcal^{\phi}\setminus\Nb\Bcal^{\phi}$ are called \emph{holes}.  The semigroup $\Nb\Bcal^{\phi}$ is
\emph{normal} if and only if $\Nb\Bcal^{\phi} = N\Bcal^{\phi}$; 
that is, if and only if there are no holes.
Normality of semigroups can be checked using the software~\verb|Normaliz|\cite{Normaliz}.
See~\cite{HemmeckeTakemureYoshida09:Computing_holes_in_semigroups} for an algorithm to compute the holes of non-normal semigroups.

\begin{lemma}
  \label{lem:normal-noholes}
  Let $\Bcal \in \zz^{h \times n}$, let $\phi:  \zz^{n}  \rightarrow \zz^{t}$ be a linear map, and let
  $$
  \LL  = \Big\{ u \in \zz^{t}: \binom{0}{u} \in \zz \Bcal^{\phi} \Big\}.
  $$
  If $\Nb\Bcal^{\phi}$ 
  is normal, then there exists 
  an $r \times t$ integer matrix $D$ such that the following holds: For any $b\in\Nb\Bcal$, there exists $c\in\Zb^{r}$  with
  \begin{equation*}
    \phi(\Fbf(\Bcal, b)) = \{ u\in \LL + v : D u \ge c \} = \Flat(\Lcal,v,D,c).
  \end{equation*}
  The matrix $D$ can be obtained from an inequality description of the cone~$\Rb_{\ge}\Bcal^{\phi}$.
\end{lemma}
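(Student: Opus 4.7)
The plan is to directly exploit the equivalence, stated in the paragraph preceding the lemma, that $u\in\phi(\Fbf(\Bcal,b))$ if and only if $\binom{b}{u}\in\Nb\Bcal^{\phi}$. Normality of $\Nb\Bcal^{\phi}$ then lets me replace this semigroup membership with two separate conditions: a lattice condition $\binom{b}{u}\in\Zb\Bcal^{\phi}$ and a cone condition $\binom{b}{u}\in\Rb_{\ge}\Bcal^{\phi}$. Each of these will be rephrased as a condition on $u$ alone, leading respectively to ``$u\in\Lcal+v$'' and ``$Du\ge c$''.

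First I would handle the cone condition. By the Minkowski--Weyl theorem, the finitely generated cone $\Rb_{\ge}\Bcal^{\phi}\subseteq\Rb^{h+t}$ admits a finite inequality description $\{x\in\Rb^{h+t}:Ax\ge 0\}$ for some integer matrix $A$. Splitting the columns of $A$ according to the block structure of $\Bcal^{\phi}$ as $A=(A_{1}\mid A_{2})$, the inequality $A\binom{b}{u}\ge 0$ becomes $A_{2}u\ge -A_{1}b$. Setting $D:=A_{2}$ and $c:=-A_{1}b$ yields the desired inequality part, and crucially $D$ does not depend on $b$, since it only depends on the facet normals of the fixed cone~$\Rb_{\ge}\Bcal^{\phi}$.

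Next I would handle the lattice condition. Since $b\in\Nb\Bcal$, there exists $w\in\Nb^{n}$ with $\Bcal w=b$; set $v:=\phi(w)$, so that $\binom{b}{v}=\Bcal^{\phi}w\in\Zb\Bcal^{\phi}$ and $v\in\phi(\Fbf(\Bcal,b))$. Then for any $u\in\Zb^{t}$,
\begin{equation*}
  \tbinom{b}{u}\in\Zb\Bcal^{\phi}
  \;\Longleftrightarrow\;
  \tbinom{0}{u-v}\in\Zb\Bcal^{\phi}
  \;\Longleftrightarrow\;
  u-v\in\Lcal,
\end{equation*}
using the definition of $\Lcal$ from the statement. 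Combining this with the previous step and normality completes the proof: $u\in\phi(\Fbf(\Bcal,b))$ iff $u\in\Lcal+v$ and $Du\ge c$, which is exactly $\Flat(\Lcal,v,D,c)$.

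I don't expect any real obstacle here; the lemma is essentially a packaging of Gordan's lemma together with the Minkowski--Weyl inequality description. The only point deserving care is verifying that $D$ can be chosen independently of $b$, which I would emphasize comes from the fact that the cone $\Rb_{\ge}\Bcal^{\phi}$ and the lattice $\Zb\Bcal^{\phi}$ are intrinsic to $\Bcal^{\phi}$ and do not vary with $b$; only the right-hand side $c$ and the shift $v$ depend on~$b$.
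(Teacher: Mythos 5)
Your proof is correct and follows essentially the same approach as the paper's: normality lets you split membership in $\Nb\Bcal^{\phi}$ into the lattice condition $\binom{b}{u}\in\Zb\Bcal^{\phi}$ (giving $u\in\Lcal+v$) and the cone condition $\binom{b}{u}\in\Rb_{\ge}\Bcal^{\phi}$ (giving $D_{2}u\ge -D_{1}b$ from the facet description). You fill in slightly more detail on the lattice step by exhibiting $v=\phi(w)$ explicitly, but the argument is the same.
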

\begin{proof}
  If $\Nb\Bcal^{\phi}$ is normal, then it is equal to the intersection of the lattice $\Zb\Bcal^{\phi}$ and
  the polyhedral cone $\Rb_{\ge}\Bcal^{\phi}$.
Let $(D_{1}\;D_{2})$ be a matrix such that
$$
\Rb_{\ge}\Bcal^{\phi}  =   \Big\{  (b,u)  \in \rr^{n + t}  : D_{1}b + D_{2}u  \geq   0 \Big\}.
$$  
Hence, if $\Fbf(\Bcal,b)\neq\emptyset$, then $\phi(\Fbf(\Bcal, b))  =  \big\{ u  \in \LL + v  :   D_{2}u  \geq  -D_{1} b  \big\}$
where $v \in \zz^{t}$ is any vector such that $\binom{b}{v} \in \zz \Bcal^{\phi}$.  
\end{proof}

By Lemma~\ref{lem:normal-noholes}, if $\nn \Bcal^{\phi}$ is normal,
a PF-Gr\"obner basis can be computed via an $(\LL, D)$-Gr\"obner basis for suitable
$\LL$ and~$D$.  
We demonstrate this in the next example.
In general, the $(\LL, D)$-Gr\"obner basis might be larger
than a minimal PF-Gr\"obner basis, because a PF-Gr\"obner basis does not need to work for all
sets of the form $\Fbin(\LL,v,D, c)$ for all $c \in \zz^{r}$, but it suffices if it works for those fibers
where $c$ lies in 
the affine semigroup $-\Nb D_{1}\Bcal$.

\begin{example}
  \label{ex:simple-PFs}
  Let
  $\Bcal = \begin{smallpmatrix}
    1 & 1 & 1 & 1 \\
    0 & 0 & 1 & 2
  \end{smallpmatrix}$ and
  $\phi = 
  \begin{smallpmatrix}
    1 & 0 & 0 & 0 \\
    0 & 1 & 1 & 0 \\
    0 & 0 & 0 & 1
  \end{smallpmatrix}$.
  Then
  $\Bcal^{\phi} =
    \begin{smallpmatrix}
      1 & 1 & 1 & 1 \\
      0 & 0 & 1 & 2 \\
      1 & 0 & 0 & 0 \\
      0 & 1 & 1 & 0 \\
      0 & 0 & 0 & 1
    \end{smallpmatrix}$.
  This matrix has rank four, and hence the kernel Markov basis is empty.

  Denote the coordinates in $\Rb^{5}$ by $x_{1},x_{2},y_{1},y_{2},y_{3}$.  
  According to \verb|Normaliz|, the affine semigroup $\Nb\Bcal^{\phi}$ is normal and consists of all integer solutions of
  \begin{gather*}
    y_{1} + y_{2} + y_{3} = x_{1},
    \\
    y_{1} \ge 0,
    \qquad
    y_{1} + y_{2} \le x_{1},
    \qquad
    y_{1} + y_{2} \ge x_{1} - \frac12 x_{2},
    \qquad
    2 y_{1} + y_{2} \le 2 x_{1} - x_{2}.
  \end{gather*}
  A Markov basis for these projected fibers is the same as a Markov basis in Example~\ref{ex:simple-ilMB}.  In fact, the
  gray set in Figure~\ref{fig:simple-iMB} is equal to the projected fiber $\phi(\Fbf(\Bcal, \binom{4}{5}))$.  \qed
\end{example}

Even if $\Nb\Bcal^{\phi}$ is not normal, the inequality description of $\Rb_{\ge}\Bcal^{\phi}$ gives valuable
information about~$\Nb\Bcal^{\phi}$.  Namely, $\Nb\Bcal^{\phi}$ can be described as
$\Nb\Bcal^{\phi}=N\Bcal^{\phi}\setminus H$, where $H$ denotes the set of holes of $\Nb\Bcal^{\phi}$.  A similar
description can be given to the projected fibers: If $(b,h)\in H$, then we call
$h\in\Nb^{t}$ a \emph{hole} of $\phi(\Fbf(\Bcal, b))$.  In some instances, the set of holes is small enough that we can
still find a good PF Markov or Gr\"obner basis.  We will illustrate this in Section \ref{sec:K4-e}.
%


\subsection{Lifting Gr\"obner bases of lattices}
\label{sec:lifting-GBs-algo}

Finally, we give an algorithm for lifting for Gr\"obner bases of a lattice~$\Lcal$; that is, we want to compute a
Gr\"obner basis of~$\Lcal$ from a PF Gr\"obner basis~$\Gcal$.  Since a union of lifts of singletons $\{g\}$ for all
$g\in\Gcal$ is a lift of $\Gcal$, it suffices to know how to lift a single element.
Lifting is easy if $\Zker\phi\cap\Lcal=\{0\}$.  In this case, the lift of $g$ consists of the unique element
$m\in\Lcal\cap\phi^{-1}(g)$.  This special case of lifting appears
in~\cite{KahleKroneLeykin14:Equivariant_Markov_bases}.  In general, the problem to lift $g\in\Gcal$ can again be formulated
as a Gr\"obner basis computation:

For any $\Fbf\in\Fcal$ and $u_{1},u_{2}\in\phi(\Fbf)$ with $u_{2}-u_{1}=g$ let
\begin{equation*}
  \Fblift(\Fbf, \phi,  u_{1},u_{2}) := \big\{ v\in\Fbf : \phi(v)\in\{u_{1}, u_{2}\} \big\}
  = \big\{ v\in\Fbf : \phi(v)-u_{1}\in\{0,g\} \big\}.
\end{equation*}
If $\hat\Mcal$ is a Gr\"obner basis of the family
\begin{equation*}
\Fclift(\Fcal, \phi, g) :=
  \big\{\Fblift(\Fbf, \phi,  u_{1},u_{2})  : \Fbf\in\Fcal,u_{1},u_{2}\in\phi(\Fbf),u_{2}-u_{1}=g\big\},
\end{equation*}
then $\Mcal_{g}=\{m\in\hat\Mcal:\phi(m)= g\}$ lifts~$g$.

\begin{lemma}\label{lem:lift-to-ineq}
  Let $\LL \subseteq \zz^{n}$ be a lattice and $D$ an $r \times n$ integer matrix.  Then
$$
\Fclift(\Fcin(\LL, D), \phi, g)  \subseteq  \Fcin(\LL_g, D_g)
$$
for a suitable lattice $\LL_{g}$ and matrix $D_{g}$.
\end{lemma}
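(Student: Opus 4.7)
My plan is to construct, from the data $(\LL, D, \phi, g)$, an explicit lattice $\LL_g\subseteq\zz^n$ and an integer matrix $D_g$ with $n$ columns, and then verify that every member of $\Fclift(\Fcin(\LL,D),\phi,g)$ coincides with some fiber of $\Fcin(\LL_g,D_g)$. The guiding observation is that, for fixed $u_1$ and $u_2=u_1+g$ in $\phi(\Fbf)$, the set $\Fblift(\Fbf,\phi,u_1,u_2)$ is the disjoint union of two affine $(\LL\cap\Zker\phi)$-cosets, one sitting over $u_1$ and one over $u_2$, whose shift vectors differ by some element of $\LL$ mapping to $g$ under $\phi$. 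I will realise this two-piece union as a single coset of an enlarged lattice $\LL_g$ cut out by two extra linear inequalities that pin an implicit ``step counter'' $s$ to the values $0$ and $1$.

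First I will dispose of the degenerate cases. If $g\notin\phi(\LL)$, then any two elements of a common $(\LL,D)$-fiber have $\phi$-images differing by an element of $\phi(\LL)$, so no $\Fblift$ in the family is non-empty and the statement is vacuous. If $g=0$, then $\Fblift(\Fbf,\phi,u_1,u_1)=\{w\in\Fbf:\phi(w)=u_1\}$ is directly a fiber of $\Fcin(\LL\cap\Zker\phi,D)$, so I take $\LL_g:=\LL\cap\Zker\phi$ and $D_g:=D$. Otherwise $g\in\phi(\LL)\setminus\{0\}$ and I fix once and for all an element $h_0\in\LL$ with $\phi(h_0)=g$ and a linear functional $f\colon\zz^t\to\zz$ with $f(g)>0$ (available because $g\ne 0$), and set
\[
  \LL_g := (\LL\cap\Zker\phi)+\zz\, h_0,\qquad
  D_g := \begin{pmatrix} D \\ f\circ\phi \\ -f\circ\phi \end{pmatrix}.
\]
Given a member $\Fblift(\Fbin(\LL,v,D,c),\phi,u_1,u_2)$ of the family, I pick $v_1$ in the underlying fiber with $\phi(v_1)=u_1$, set $c_g:=(c,\,f(u_1),\,-f(u_2))^{\top}$, and verify $\Fblift=\Fbin(\LL_g,v_1,D_g,c_g)$ by two inclusions. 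Writing an element of $\LL_g+v_1$ as $w=v_1+\lambda+sh_0$ with $\lambda\in\LL\cap\Zker\phi$ and $s\in\zz$ gives $\phi(w)=u_1+sg$, so the two new inequalities read $sf(g)\ge 0$ and $sf(g)\le f(g)$, which together with $f(g)>0$ force $s\in\{0,1\}$, hence $\phi(w)\in\{u_1,u_2\}$; conversely every $w\in\Fblift$ satisfies $\phi(w)-u_1=sg$ for a unique $s\in\{0,1\}$, after which $w-v_1-sh_0\in\LL\cap\Zker\phi$ places $w$ into $\LL_g+v_1$, and the remaining inequalities hold by direct substitution.

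The step I expect to be the main obstacle is conceptual rather than computational: one must recognise that the disjunctive condition ``$\phi(w)\in\{u_1,u_2\}$'' can be linearised by enlarging the coset lattice with one ``step'' generator $h_0$ and then using an auxiliary scalar functional $f$ to bracket the implicit step count. Once this reformulation is in hand, the construction of $\LL_g$ and $D_g$ is essentially forced, and both inclusions reduce to routine substitution. A small secondary point worth recording is that $\LL_g$ really does depend only on $(\LL,\phi,g)$ and not on the particular preimage: any two elements of $\LL$ mapping to $g$ differ by an element of $\LL\cap\Zker\phi$ and therefore generate the same enlarged lattice together with $\LL\cap\Zker\phi$.
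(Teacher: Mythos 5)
Your proposal is correct and is essentially the paper's proof. The paper takes $\LL_g=\phi^{-1}(\zz g)\cap\LL$ (which is equal to your $(\LL\cap\Zker\phi)+\zz h_0$) and uses the specific choice $f=\langle g,\cdot\rangle$ for the bracketing functional; your allowance of an arbitrary $f$ with $f(g)>0$ and your separate treatment of the degenerate cases are harmless variations of the same construction.
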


\begin{proof}
  Let $d_{g}$ be the linear form on $\Zb^{t}$ defined by $d_{g}(h) = \langle g,\phi(h)\rangle$, and consider the lattice
  $\Lcal_{g} = \phi^{-1}(\Zb g) \cap \LL$.  
  Note that for the lifting problem to yield any nontrivial moves, we
  need only look at fibers $\Fbin(\LL,v,D,c)$
  such that there are $u_{1}, u_{2}\in \phi(\Fbin(\LL,v,D,c))$ with $u_{2}-u_{1}=g$.
  In particular, we can restrict to $v$ such that $\phi(v) = u_{1}$.
  For such $v$, 
  \begin{align*}
    \Fblift(\Fbin(\LL,v,D,c), \phi, u_{1},u_{2}) & =
    \{ w \in \Fbin(\LL,v,D,c) : \phi(w)\in\{u_{1}, u_{2}\} \}  \\
    &= \{ w \in \LL + v :  Dw \ge c, \phi(w) \in \{u_{1}, u_{2} \} \}  \\
    &= \{ w \in \LL_{g} + v : Dw \geq c, \langle g,u_{1}\rangle \leq d_{g}(w) \leq \langle g, u_{2}\rangle \}.
  \end{align*}
  The last equality can be seen as follows: If $w\in\LL+v$ and $\phi(w)\in\{u_{1}, u_{2} \}$, then
  $\phi(w-v)\in\{0,g\}$, and so $w\in\LL_{g}+v$.  Moreover, $\langle g,u_{1}\rangle \leq d_{g}(w) \leq \langle g,
  u_{2}\rangle$.  Conversely, if $w\in\LL_{g}+v$, then $\phi(w)\in u_{1}+\Zb g$.  The inequality $\langle
  g,u_{1}\rangle \leq d_{g}(w) \leq \langle g, u_{2}\rangle$ implies $\phi(w)=u_{1}$ or $\phi(w)=u_{1}+g=u_{2}$.  Hence,
  the statement follows with our choice of $\LL_{g}$ and with $D_{g}$ the matrix $D$ with two rows appended
  corresponding to $d_{g}$ and~$-d_{g}$.
\end{proof}

In many situations, Lemma~\ref{lem:lift-to-ineq} allows us to calculate lifts
using inequality Gr\"obner bases.  
The following proposition follows directly:

\begin{proposition}
  \label{prop:lifting-procedure-GB}
  Let $\succeq,\succeq'$ be compatible additive preorders for $\phi:\Zb^{n}\to\Zb^{t}$, let $\Lcal\subseteq\Zb^{n}$ be a
  lattice, $D\in\Zb^{r\times n}$, and let $\Gcal$ be a PF Gr\"obner basis for~$\Fcin(\Lcal,D)$.  For each~$g\in\Gcal$, let
  $\Lcal_{g}$, $D_{g}$ be as in Lemma~\ref{lem:lift-to-ineq}, let $\Mcal_{g}'$ be an
  $(\Lcal_{g},D_{g},\succeq)$-Gr\"obner basis, and let $\Mcal_{g} = \{ m\in\Mcal_{g}' : \phi(m) = + g \}$.  Then
  $\bigcup_{g\in\Gcal}\Mcal_{g}$ is an $(\Fcin(\LL,D),\phi,\succeq)$-lift of~$\Gcal$.
\end{proposition}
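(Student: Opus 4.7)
My plan is to prove the statement one Gröbner basis element at a time, since a union of $(\Fcin(\Lcal,D),\phi,\succeq)$-lifts of singletons $\{g\}$ is automatically a lift of~$\Gcal$. So it suffices to fix $g\in\Gcal$ and show that $\Mcal_{g}$ is a lift of~$\{g\}$.

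To do this, I would fix an arbitrary pair $v\succeq v'$ lying in some $\Fbf\in\Fcin(\Lcal,D)$ with $\phi(v-v')=g$, and set $u_{1}=\phi(v)$, $u_{2}=\phi(v')$, so that $u_{1}-u_{2}=g$. Form the two-level auxiliary set $\tilde\Fbf = \Fblift(\Fbf,\phi,u_{2},u_{1}) = \Fbf\cap\phi^{-1}(\{u_{1},u_{2}\})$. By Lemma~\ref{lem:lift-to-ineq} one has $\tilde\Fbf\in\Fcin(\Lcal_{g},D_{g})$, so $\Mcal_{g}'$ is a $\succeq$-Gröbner basis on~$\tilde\Fbf$. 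Moreover, since $\Lcal_{g}\subseteq\phi^{-1}(\zz g)$ and $\tilde\Fbf$ uses only two $\phi$-levels, any $\Mcal_{g}'$-edge inside $\tilde\Fbf$ either preserves the $\phi$-level or jumps by~$\pm g$.

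The heart of the proof is to follow a $\succeq$-non-increasing $\Mcal_{g}'$-path from $v$ and to show it must eventually cross from level $u_{1}$ to level~$u_{2}$. Monotonicity of $\phi$ gives $u_{1}\succeq' u_{2}$, and compatibility then forces the sink equivalence class of $\tilde\Fbf_{\Mcal_{g}',\succeq}/{\sim}$ not to sit strictly above level~$u_{2}$. Writing $m$ for the first crossing move along such a path and $m_{0}$ for the sum of the preceding (level-preserving) moves, I obtain $m_{0}\in\Lcal\cap\Zker\phi$, $\phi(m)=-g$ (so $-m\in\Mcal_{g}$), and the $\succeq$-non-increasing chain
\[
v \;\succeq\; v+m_{0} \;\succeq\; v+m_{0}+m, \qquad \phi(v+m_{0}+m)=u_{2}=\phi(v'),
\]
which is exactly the decomposition demanded by Definition~\ref{def:lift} (using that paths in lifts are implicitly symmetric in~$\pm\Mcal_{g}$).

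The only subtle step is the existence of a crossing, which is where the compatibility hypothesis enters; everything else is bookkeeping and a direct invocation of Lemma~\ref{lem:lift-to-ineq}. The degenerate case $u_{1}\sim' u_{2}$ causes no trouble either, because compatibility then makes every level-$u_{1}$ point of $\tilde\Fbf$ $\sim$-equivalent to every level-$u_{2}$ point, so a non-increasing path from $v$ can freely take a crossing step.
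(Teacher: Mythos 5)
Your proof is correct and supplies the crossing argument that the paper leaves implicit when it says the proposition ``follows directly'' from Lemma~\ref{lem:lift-to-ineq}: you pass to the two-level set $\Fblift$, observe via Lemma~\ref{lem:lift-to-ineq} that it lies in $\Fcin(\Lcal_g,D_g)$, and then use the Gr\"obner basis property of $\Mcal_g'$ together with the compatibility of $\succeq$ and $\succeq'$ to produce a non-increasing path from $v$ that crosses to level $u_2$, giving the required $m_0+m$ decomposition. The one small caveat is the parenthetical ``so $-m\in\Mcal_g$'': the crossing edge lies in $\pm\Mcal_g'$, and if the underlying basis element has $\phi$-value $-g$ rather than $+g$ then its negative need not lie in $\Mcal_g'$; but this sign asymmetry is already built into the paper's Definition~\ref{def:lift} (compare Figure~\ref{fig:lifting} and the use of $\pm\Mcal$ in the proof of Theorem~\ref{thm:GB-lifting}), so your $\pm$-symmetric reading is the intended one.
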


\begin{example}
  \label{ex:simple-lifting}
  We continue Example~\ref{ex:simple-PFs}, using the Markov basis~\eqref{eq:simple-ilMB}.  In this case, since
  $\ker\Bcal^{\phi}=\{0\}$, the lifting procedure yields one lift for each of the two vectors.  Hence the lifted Markov
  basis is
  \begin{equation*}
    \Mcal = \Big\{ (1, -1, 0, 0), \quad (1, 0, -2, 1) \Big\}.
  \end{equation*}
  This is also the Markov basis that \verb|4ti2| computes when given the matrix~$\Bcal$.  \qed
\end{example}

Less trivial examples of lifting appear in Section~\ref{sec:models}.

\subsection{The codimension-one case and the slow-varying property}
\label{sec:slow-varying}

The complexity of projecting the fibers 
and of lifting 
crucially depends on the choice of the map~$\phi$.  How to find a good~$\phi$ is difficult to say in general.  One
aspect is the dimensionality of the projected fibers.
\begin{definition}
  The \emph{codimension} of the $(\Fcal,\phi,\succeq)$-lifting is defined as $\sup_{\Fbf\in\Fcal}\dim(\phi(\Fbf))$, where
  $\dim(\phi(\Fbf))$ denotes the dimension of the affine hull of~$\phi(\Fbf)$.
\end{definition}
In the case $\Fcal=\Fcal(\Bcal)$ of matrices, the codimension is $\dim(\phi(\Zker\Bcal))$, in the case
$\Fcal=\Fclat(\Lcal)$ of lattices, the codimension is~$\dim(\phi(\Lcal))$, where in both cases $\dim$ denotes the
dimension of a lattice.

In this section we focus on the codimension-one case and relate our theory to some results
of~\cite{EngstromKahleSullivant13:TFP-II}.  In this case, the
projected fibers are at most one-dimensional.  Let $g\in\Zb^{t}$ be a generator of~$\phi(\Zker\Bcal)$.    For any $b\in\Nb\Bcal$ and $u_{0}\in\Fbf(\Bcal,b)$ we have
$\phi(\Fbf(\Bcal,b)) \subseteq u_{0} + \Zb g$.  If there are no holes, then $\phi(\Fbf(\Bcal,b))$ consists of
consecutive elements of $u_{0} + \Zb g$; that is $\phi(\Fbf(\Bcal,b)) = \{u_{0} + k g : l \le k \le l' \}$ for some
$l,l'\in\Zb$.  In this case, $\{\pm g\}$ is a PF Gr\"obner basis for any additive preorder on~$\Zb^{t}$.

\begin{definition}
  In the codimension-one case, a Gr\"obner basis $\Mcal$ of~$\Bcal$ is \emph{slow-varying} with respect to~$\phi$, if there
  exists a single vector~$g\in\Zb^{t}$ such that $\phi(\Mcal)\subseteq\{0,\pm g\}$.
\end{definition}
Slow-varying Markov bases are useful in the gluing procedure in the toric
fiber product construction as shown in \cite{EngstromKahleSullivant13:TFP-II}.
Clearly, a slow-varying Gr\"obner basis exists if and only if $\{\pm g\}$ is a PF Gr\"obner basis for any additive
preorder on~$\Zb^{t}$.  Hence:
\begin{lemma}
  \label{lem:slow-varying-lift}
  Assume that $\phi$ has codimension one with respect to~$\Bcal$.
  If $\Nb\Bcal^{\phi}$ is normal, then there exists a slow-varying Gr\"obner basis for any additive preorder.
\end{lemma}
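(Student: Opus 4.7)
My plan is to invoke the observation stated immediately before the lemma, which reduces the claim to verifying that $\{\pm g\}$ is a PF Gr\"obner basis for \emph{every} additive preorder $\succeq'$ on~$\Zb^t$. The ``only if'' direction of that equivalence is immediate: the $\phi$-image of the moves in a slow-varying GB lies in $\{0,\pm g\}$ and inherits the required connectivity/descent properties from the original basis. The ``if'' direction assembles a kernel Gr\"obner basis together with a $\phi$-lift of~$\{g\}$ as in Theorem~\ref{thm:GB-lifting} and Proposition~\ref{prop:lifting-procedure-GB}, so I will take the forward reduction for granted and focus on the PF Gr\"obner basis statement.

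First, I would dispatch the codimension-zero case: there $\phi(\Zker\Bcal)=\{0\}$, so \emph{any} Gr\"obner basis of~$\Bcal$ (being contained in $\Zker\Bcal\subseteq\Zker\phi$) is automatically slow-varying. Assuming from now on that the codimension equals one, let $g\in\Zb^t$ span $\phi(\Zker\Bcal)$. By the normality hypothesis, Lemma~\ref{lem:normal-noholes} yields an integer matrix $D$ and a shift~$v$ such that every non-empty projected fiber takes the form $\Flat(\Zb g,v,D,c)=\{u\in v+\Zb g : Du\ge c\}$. Since every $\Fbf(\Bcal,b)$ is finite, its $\phi$-image is finite as well, so each such set is a finite arithmetic progression $\{u_0+kg : l\le k\le l'\}$ with $l\le l'\in\Zb$ and \emph{no gaps}---this last point is exactly where normality enters, as holes in $\Nb\Bcal^{\phi}$ would translate into missing interior lattice points on the line.

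Finally, I would fix an arbitrary additive preorder $\succeq'$ on~$\Zb^t$ and observe that on such a one-dimensional interval, the moves $\pm g$ trivially connect the fiber. Exactly one of $g\succ'0$, $g\prec'0$, or $g\simeq'0$ holds; in each case, iterating the $\succeq'$-nonincreasing move (either one, if $g\simeq'0$) reaches the set of $\succeq'$-minima in at most $l'-l$ steps. This shows that $\{\pm g\}$ is a $\succeq'$-Gr\"obner basis of $\phi(\Fcal(\Bcal))$, completing the argument.

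The main (minor) obstacle is the preceding equivalence rather than the lemma itself: in the direction ``PF Gr\"obner basis $\Rightarrow$ slow-varying Gr\"obner basis,'' Theorem~\ref{thm:GB-lifting} requires a compatible pair~$(\succeq,\succeq')$, so one must check that in codimension one such a $\succeq'$ can always be produced from the given~$\succeq$---a step that is straightforward because all projected moves are multiples of the single vector~$g$, so the sign of $\langle\mathfrak{c},w\rangle$ for any fixed lift $w\in\Zker\Bcal$ of~$g$ determines the required direction.
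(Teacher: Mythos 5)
Your argument reconstructs the same chain of reasoning that the paper condenses into the text preceding the lemma: Lemma~\ref{lem:normal-noholes} combined with codimension one shows each projected fiber is a gapless interval $\{u_0+kg : l\le k\le l'\}$, so $\{\pm g\}$ is a PF Gr\"obner basis for every additive preorder on $\Zb^t$, and the displayed ``Clearly\dots'' equivalence then delivers the lemma via the lifting machinery of Theorem~\ref{thm:GB-lifting}. The codimension-zero aside in your argument is superfluous, since the hypothesis already fixes the codimension at one.

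However, the step you acknowledge as the ``main (minor) obstacle'' is exactly where a genuine gap remains. Your claim that ``the sign of $\langle\fkc,w\rangle$ for any fixed lift $w\in\Zker\Bcal$ of $g$ determines the required direction'' is not correct: two lifts of $g$ differ by an element of $\Zker\Bcal\cap\Zker\phi$, on which $\fkc$ has no reason to vanish, so the sign of $\langle\fkc,w\rangle$ can genuinely depend on the choice of~$w$. When the signs disagree, no compatible $\succeq'$ exists, Theorem~\ref{thm:GB-lifting} becomes inapplicable, and in fact a slow-varying $\succeq$-Gr\"obner basis need not exist even under the normality hypothesis. Consider $\Bcal = \begin{smallpmatrix}1&1&1&1\\0&1&2&3\end{smallpmatrix}$, $\phi(v)=v_4$, and $\succeq$ the lexicographic order with $x_2$ most significant, then $x_1$, then $x_3$, then $x_4$. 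One checks that $\Nb\Bcal^{\phi}$ is normal and $g=1$, but in $\Fbf(\Bcal,(6,9))$ the element $(3,0,0,3)$ is not the $\succeq$-minimum (which is $(2,0,3,1)$), and the unique $\succeq$-decreasing move available from $(3,0,0,3)$ within the fiber is $(-1,0,3,-2)$, whose projection is $-2g$; hence no slow-varying set can be a $\succeq$-Gr\"obner basis here. The paper's ``Clearly'' glosses over the same subtlety, so your proposal is no weaker than the paper's informal argument, but the step you label ``straightforward'' is precisely where the missing hypothesis---that $\succeq$ admit a compatible $\succeq'$---enters; it holds automatically for ${\succeq}={\succeq_0}$ (the Markov-basis case that drives the subsequent applications), but not for an arbitrary additive preorder.
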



\section{The toric fiber product}
\label{sec:TFP}

We now turn our attention to the toric fiber product construction.  This construction involves several maps that lend
themselves naturally as candidates for lifting.  In Sections~\ref{sec:kernel-Grobner-tfp} to~\ref{sec:lifting-TFP}, we
show how the results of the previous section help to compute Gr\"obner bases of toric fiber products.
Section~\ref{sec:simple-ex} contains an elaborate example.

We first recall the construction and fix the notation.
The toric fiber product is defined for general $\nn \Acal$-homogeneous ideals in~\cite{Sullivant07:TFPs}.  We focus
exclusively on the case of toric fiber products of toric ideals, and hence, toric fiber products of matrices.

\begin{definition}
  \label{def:TFP}
  Let $\Acal\in\Zb^{s\times t}$ be an integer matrix with columns~$a_{1},\dots,a_{t}$.
  Any surjection $\phi:[n]\to[t]$ induces a surjective map $\Zb^{n}\to\Zb^{t}, e_{i}\mapsto e_{\phi(i)}$, which we
  denote by $\phi$ again.  Let $\Bcal=(b_{1},\dots,b_{n})$ be an integer matrix with $n$ columns.
  We say that $\Bcal$ is \emph{$\Acal$-graded} by $\phi$, if one of the following two equivalent statements is
  satisfied:
  \begin{itemize}
  \item
    There is a linear map $\pi:\Nb\Bcal\to\Nb\Acal$ with $\pi(b_{i})=a_{\phi(i)}$.
  \item The map $\phi:\Zb^{n}\to\Zb^{t}$ satisfies $\phi(\Zker\Bcal)\subseteq\Zker\Acal$.
  \end{itemize}

  Given two matrices $\Bcal,\Bcal'$ that are $\Acal$-graded by two maps $\phi,\phi'$, the \emph{toric fiber product} is
  the matrix 
  \begin{equation*}
    \Bcal\TFP\Bcal' :=
    \left\{
      \begin{pmatrix}
        b_{i} \\ b'_{j}
      \end{pmatrix}
      :  \phi(i)=\phi'(j)
    \right\}
  \end{equation*}
  that consists of all pairs of columns from $\Bcal$ and $\Bcal'$ that are mapped to the same column of~$\Acal$.  The
  \emph{codimension} of this toric fiber product is equal to~$\dim\Zker\Acal$.
\end{definition}
Lemma~\ref{lem:codim-TFP-codim-liftings} below relates the codimension of a toric fiber product to the codimensions of
natural associated liftings.

Consider the map $\psi:\Zb^{\Bcal\TFP\Bcal'}\to\Zb^{\Bcal}$ that maps the unit vector $e_{i,j}$ corresponding to
$(b_{i},b'_{j})$ to the $i$th unit vector $e_{i}\in\Zb^{\Bcal}$, and consider the corresponding map
$\psi':\Zb^{\Bcal\TFP\Bcal'}\to\Zb^{\Bcal'}$ that maps $e_{i,j}$ to $e_{j}\in\Zb^{\Bcal'}$.  Then the following
diagram commutes:
\begin{center}
  \begin{tikzpicture} 
    \node (B1xAB2) at (2,4) {$\Zb^{\Bcal\TFP\Bcal'}$};
    \node (B1) at (0,2) {$\Zb^{\Bcal}$};
    \node (B2) at (4,2) {$\Zb^{\Bcal'}$};
    \node (A) at (2,0) {$\Zb^{\Acal}$};
    \begin{scope}[->,>=stealth]
      \path (B1xAB2) edge node[above] {$\psi$} (B1);
      \path (B1xAB2) edge node[above] {$\psi'$} (B2);
      \path (B1) edge node[above] {$\phi$} (A);
      \path (B2) edge node[above] {$\phi'$} (A);
      \path (B1xAB2) edge node[auto] {$\xi$} (A);
    \end{scope}
  \end{tikzpicture}
\end{center}
where $\xi = \phi \circ \psi = \phi' \circ \psi'$.

Let $\succeq_{\times}$, $\succeq_{\Bcal}$, $\succeq_{\Bcal'}$ and $\succeq_{\Acal}$ be
additive preorders on $\Zb^{\Bcal\TFP\Bcal'}$, $\Zb^\Bcal$, $\Zb^{\Bcal'}$, and $\Zb^\Acal$, respectively,
that are compatible with $\phi$, $\phi'$ and~$\xi$.  In general, it is not possible to require
that $\psi$ and $\psi'$ are also compatible with respect to these orders.  Instead, we call $\succeq_{\times}$
\emph{compatible}, if it satisfies the following weaker property:
\begin{itemize}
\item For any $u_{1},u_{2}\in\Zb^{\Bcal\TFP\Bcal'}$, if $\psi(u_{1})\succeq_{\Bcal}\psi(u_{2})$ and if
  $\psi'(u_{1})\succeq_{\Bcal'}\psi'(u_{2})$, then $u_{1}\succeq_{\times}u_{2}$.
\end{itemize}
For given preorders $\succeq_{\Bcal}$, $\succeq_{\Bcal'}$ and $\succeq_{\Acal}$, a compatible
preorder $\succeq_{\times}$ on $\Zb^{\Bcal\TFP\Bcal'}$ can be constructed as follows:
\begin{align*}
  u_{1}\succeq_{\times}u_{2}
  \quad:\Longleftrightarrow\quad
  & \psi(u_{1})\succ_{\Bcal}\psi(u_{2}) \\
  & \text{or } \psi(u_{1})\succeq_{\Bcal}\psi(u_{2})\succeq_{\Bcal}\psi(u_{1})\text{ and }\psi'(u_{1})\succeq_{\Bcal'}\psi'(u_{2}).
\end{align*}

Our goal is to compute $\succeq_{\times}$-Gr\"obner bases of $\ker_\zz {\Bcal\TFP\Bcal'}$.  We want to apply the lifting
machinery from the previous section and lift along the map~$\xi$.
To apply Theorem \ref{thm:GB-lifting}, we need to understand the kernel Gr\"obner basis, the PF Gr\"obner basis, and we
need to lift the PF Gr\"obner basis.  This will be described in the next three sections.
A key result is that we only need to compute 
lifts along $\phi$ and~$\phi'$, which can be ``glued'' to produce lifts along~$\xi$.  The complexity of these lifts is
governed by the codimension of the toric fiber product.
\begin{lemma}
  \label{lem:codim-TFP-codim-liftings}
  The codimension of the toric fiber product $\Bcal\TFP\Bcal'$ is not less than the codimension of
  $(\Fbf(\Bcal\TFP\Bcal'),\xi,\succeq_{\times})$-liftings, $(\Fbf(\Bcal),\phi,\succeq_{\Bcal})$-liftings and
  $(\Fbf(\Bcal'),\phi',\succeq_{\Bcal'})$-liftings.
\end{lemma}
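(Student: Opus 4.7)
The plan is to verify each of the three inequalities in turn by unfolding the definition of codimension. Recall that the codimension of $\Bcal\TFP\Bcal'$ equals $\dim\Zker\Acal$, while for a lifting of the form $(\Fcal(\Bcal''),\phi'',\succeq)$ the codimension is $\dim\phi''(\Zker\Bcal'')$, since each fiber $\Fbf(\Bcal'',b)$ is a translate of a subset of $\Zker\Bcal''$ and its $\phi''$-image lies in a translate of $\phi''(\Zker\Bcal'')$.

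For the liftings along $\phi$ and $\phi'$, the conclusion is immediate from the definition of an $\Acal$-grading: the condition that $\Bcal$ is $\Acal$-graded by $\phi$ is precisely $\phi(\Zker\Bcal)\subseteq\Zker\Acal$, and similarly for $\Bcal'$. Taking dimensions gives
\[
\dim\phi(\Zker\Bcal) \le \dim\Zker\Acal
\quad\text{and}\quad
\dim\phi'(\Zker\Bcal') \le \dim\Zker\Acal,
\]
as required.

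For the lifting along $\xi$, the key step is to verify the inclusion $\psi(\Zker(\Bcal\TFP\Bcal'))\subseteq\Zker\Bcal$. This follows directly from the definition of the toric fiber product: if $u\in\Zker(\Bcal\TFP\Bcal')$, then
\[
0 \;=\; \sum_{\phi(i)=\phi'(j)} u_{i,j}\binom{b_i}{b'_j},
\]
and reading off the first block gives $\sum_i\bigl(\sum_j u_{i,j}\bigr)b_i = 0$, i.e. $\psi(u)\in\Zker\Bcal$. Using the factorization $\xi=\phi\circ\psi$ from the commutative diagram above, we chain this inclusion with the $\Acal$-grading property to obtain
\[
\xi(\Zker(\Bcal\TFP\Bcal'))
\;=\; \phi(\psi(\Zker(\Bcal\TFP\Bcal')))
\;\subseteq\; \phi(\Zker\Bcal)
\;\subseteq\; \Zker\Acal,
\]
and taking dimensions finishes the third case.

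There is no genuine obstacle here; the whole proof is an exercise in unfolding the definitions of $\Acal$-grading, of the toric fiber product, and of the commuting maps $\phi,\phi',\psi,\psi',\xi$. The only substantive point is the direct verification $\psi(\Zker(\Bcal\TFP\Bcal'))\subseteq\Zker\Bcal$ (and symmetrically for $\psi'$), which could alternatively be read off from the factorization of $\xi$ through either $\phi$ or $\phi'$ without even mentioning $\psi$ explicitly.
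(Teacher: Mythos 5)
Your proof is correct and follows essentially the same route as the paper: the paper's one-line argument is precisely the chain $\xi(\Zker(\Bcal\TFP\Bcal'))\subseteq\phi(\Zker\Bcal)\subseteq\Zker\Acal$ (plus the symmetric version), which you spell out via the factorization $\xi=\phi\circ\psi$ and the verification that $\psi(\Zker(\Bcal\TFP\Bcal'))\subseteq\Zker\Bcal$.
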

\begin{proof}
  This follows from the inclusion $\xi(\Zker(\Bcal\TFP\Bcal'))\subseteq\phi(\Zker(\Bcal))\subseteq\Zker(\Acal)$, together with the
  symmetric inclusion.
\end{proof}

The results in this section are very technical.  A simple example is given in Section~\ref{sec:simple-ex}, after
presenting the theory.  Larger examples that show how to apply the results of this section to hierarchical models will
be given in Section~\ref{sec:models}.


\subsection{Kernel Gr\"obner basis and the associated codimension zero toric fiber product}
\label{sec:kernel-Grobner-tfp}

To compute the kernel Gr\"obner basis, we need the following definition:

\begin{definition}
  Let $\Bcal,\Bcal'$ be integer matrices that are $\Acal$-graded via maps $\phi,\phi'$ as above.  The
  \emph{associated codimension zero toric fiber product} is the matrix $\Bcal^{\phi}\TFP[\tilde\Acal](\Bcal')^{\phi'}$,
  where $\tilde\Acal$ is the unit matrix in~$\Nb^{t\times t}$ and where $\Bcal^{\phi}$ and $(\Bcal')^{\phi'}$ are
  $\Acal$-graded using the same maps~$\phi,\phi'$.
\end{definition}
\begin{lemma}
  \label{lem:cod-zero-connects-fibers}
  $\Zker(\xi)\cap\Zker(\Bcal\TFP\Bcal') = \Zker(\Bcal^{\phi}\TFP[\tilde\Acal](\Bcal')^{\phi'})$.  
  Hence, when lifting along~$\xi$, a kernel Gr\"obner basis is given by a Gr\"obner basis of
  $\Zker(\Bcal^{\phi}\TFP[\tilde\Acal](\Bcal')^{\phi'})$.
\end{lemma}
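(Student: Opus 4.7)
The plan is to verify the lattice equality by writing out the columns of the associated codimension-zero product explicitly and reading off the kernel conditions, and then to translate this equality into the kernel Gr\"obner basis statement.

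First, I would unwind the definition of $\Bcal^{\phi}\TFP[\tilde\Acal](\Bcal')^{\phi'}$. By construction, $\Bcal^{\phi}$ has columns $\binom{b_{i}}{e_{\phi(i)}}$ and $(\Bcal')^{\phi'}$ has columns $\binom{b'_{j}}{e_{\phi'(j)}}$, and both are $\tilde\Acal$-graded by sending the $i$-th (resp.\ $j$-th) column to $e_{\phi(i)}$ (resp.\ $e_{\phi'(j)}$). Since $\tilde\Acal$ is the identity, the pairs of columns that are matched in the toric fiber product are exactly those with $\phi(i)=\phi'(j)$, so the indexing set is the same as that of $\Bcal\TFP\Bcal'$, and the $(i,j)$-th column of the associated codimension-zero product is the vector
\[
\begin{pmatrix} b_{i} \\ e_{\phi(i)} \\ b'_{j} \\ e_{\phi'(j)} \end{pmatrix}.
\]

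Next, I would read off the kernel from this description. A vector $u=(u_{i,j})\in\Zb^{\Bcal\TFP\Bcal'}$ lies in $\Zker(\Bcal^{\phi}\TFP[\tilde\Acal](\Bcal')^{\phi'})$ if and only if summing $u_{i,j}$ times each column block gives zero, which breaks into the four conditions $\sum u_{i,j} b_{i}=0$, $\sum u_{i,j} e_{\phi(i)}=0$, $\sum u_{i,j} b'_{j}=0$, and $\sum u_{i,j} e_{\phi'(j)}=0$. The first and third say precisely that $\psi(u)\in\Zker\Bcal$ and $\psi'(u)\in\Zker\Bcal'$, which together are equivalent to $u\in\Zker(\Bcal\TFP\Bcal')$. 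The second and fourth say $\phi(\psi(u))=0$ and $\phi'(\psi'(u))=0$, both of which reduce to $\xi(u)=0$ because $\xi=\phi\circ\psi=\phi'\circ\psi'$. The two $\xi$-conditions are therefore redundant with each other, and combining the remaining independent ones gives exactly $u\in\Zker(\Bcal\TFP\Bcal')\cap\Zker(\xi)$, proving the equality of lattices.

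For the kernel Gr\"obner basis statement, I would use Corollary~\ref{cor:MBthm-for-lattice-ideals}, or rather its content unpacked at the level of fibers. The $\xi$-restricted fibers of $\Bcal\TFP\Bcal'$ are the sets
\[
\Fbf(\Bcal\TFP\Bcal',c)\cap(u+\ker\xi)=\bigl\{v\in\Nb^{\Bcal\TFP\Bcal'} : (\Bcal\TFP\Bcal')v=c,\ \xi(v)=\xi(u)\bigr\},
\]
and these coincide (after removing the duplicated block of $\xi$-constraints) with the fibers of $\Bcal^{\phi}\TFP[\tilde\Acal](\Bcal')^{\phi'}$. Hence a $\succeq_{\times}$-Gr\"obner basis of $\Zker(\Bcal^{\phi}\TFP[\tilde\Acal](\Bcal')^{\phi'})$ is exactly a $\succeq_{\times}$-Gr\"obner basis for the family of these fiber intersections, i.e., a kernel Gr\"obner basis for lifting along~$\xi$.

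The argument is essentially bookkeeping, so the only mild obstacle is making explicit the two redundancies (the duplicated $\xi$-block and the fact that the indexing set of the codimension-zero product agrees with that of $\Bcal\TFP\Bcal'$); once these are observed, both claims follow directly from inspection of the block matrix.
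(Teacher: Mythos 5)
Your proof is correct and takes essentially the same approach as the paper: both arguments simply inspect the block structure of the columns of $\Bcal^{\phi}\TFP[\tilde\Acal](\Bcal')^{\phi'}$ and observe that the $e_{\phi(i)}$-block and $e_{\phi'(j)}$-block impose the same $\xi(u)=0$ condition (the paper phrases this as a sequence of row operations eliminating one block, you phrase it as the two kernel equations being identical because $\xi=\phi\circ\psi=\phi'\circ\psi'$). Your extra paragraph spelling out why this lattice equality yields the kernel Gr\"obner basis claim is correct but simply re-derives the general fact already stated in Section~\ref{sec:Lifting-MB-GB} that a kernel Gr\"obner basis for lifting along a map is a Gr\"obner basis of the lattice $\Zker\Bcal\cap\Zker\phi$.
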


\begin{proof}
  Observe that $\Bcal\TFP\Bcal'$  can be identified with a submatrix of $\Bcal^{\phi}\TFP[\tilde\Acal](\Bcal')^{\phi'}$.
  In fact, a sequence of row operations turns the matrix $\Bcal^{\phi}\TFP[\tilde\Acal](\Bcal')^{\phi'}$ into the matrix with columns
  \begin{equation*}
    \begin{pmatrix}
      b_{i} \\
      b'_{j} \\
      e_{\phi(i)}
    \end{pmatrix}
    \text{ for all }i,j
    \text{ with }\phi(i) = \phi'(j).
  \end{equation*}
  Clearly, the kernel of this last matrix is $\Zker(\xi)\cap\Zker(\Bcal\TFP\Bcal')$.
\end{proof}

Note that $\ker\tilde\Acal=\{0\}$, and so  $\Bcal^{\phi}\TFP[\tilde\Acal](\Bcal')^{\phi'}$ is a  
codimension zero toric fiber product.  Computation of Markov bases and Gr\"obner bases
of codimension zero toric fiber product was described in \cite{Sullivant07:TFPs}.
We review the main result here.

Let $m\in\Zker\Bcal^\phi$.  Then $\phi(m)\in\Zker(\tilde\Acal)=\{0\}$, and so $\phi(m^{+})=\phi(m^{-})$.  Hence there exist maps
$\sigma_{+},\sigma_{-}$ such that $m = \sum_{i}e_{\sigma_{+}(i)} - \sum_{i}e_{\sigma_{-}(i)}$ and
$\phi(e_{\sigma_{+}(i)})=\phi(e_{\sigma_{-}(i)})$.  Choose a map $\tau$ with
$\phi'(e_{\tau(i)})=\phi(e_{\sigma_{\pm}(i)})$.  Then
\begin{equation*}
  \tilde m = \sum_{i}e_{\sigma_{+}(i),\tau(i)} - \sum_{i}e_{\sigma_{-}(i),\tau(i)}
\end{equation*}
lies in the kernel of~$\Bcal^\phi\TFP[\tilde\Acal](\Bcal')^{\phi'}$.  Call $\tilde m$ a \emph{lift} of~$m$.  This name is justified by the fact that
the set $\Lifts(m)$ of all such lifts is a $(\Fcal(\Bcal^{\phi}),\psi,\succeq_{\times})$-lift of~$m$.  Denote by
$\Lifts(\Mcal):=\bigcup_{m\in\Mcal}\Lifts(m)$ the set of all such lifts of all~$m\in\Mcal \subseteq \Zker\Bcal^\phi$.  We can similarly define the set $\Lifts(\Mcal')$, where $\Mcal' 
\subseteq \Zker(\Bcal')^{\phi'}$.

A second set of moves that we need is 
\begin{equation*}
  \Quads := \big\{ f_{i_{1},i_{2};j_{1},j_{2}} : \phi(i_{1})=\phi(i_{2})=\phi'(j_{1})=\phi'(j_{2})\big\},
\end{equation*}
where
$
  f_{i_{1},i_{2};j_{1},j_{2}} = 
  e_{i_1, j_1} + 
  e_{i_2, j_2} - 
  e_{i_1, j_2} - 
  e_{i_2, j_1}
$ 
and $e_{i,j}$ is the standard unit vector in $\zz^{\Bcal^{\phi}\TFP[\tilde\Acal](\Bcal')^{\phi'}}$
corresponding to $(b_i, b_j')$.

\begin{theorem}\cite{Sullivant07:TFPs}\label{thm:codimzero}
Suppose that $\Mcal$ and $\Mcal'$ are Markov bases for $\ker_\zz  \Bcal^{\phi}$ and 
$\ker_\zz  (\Bcal')^{\phi'}$, respectively. Then
\begin{equation}\label{eq:codimzeroGB}
\Lifts(\Mcal) \cup \Lifts(\Mcal') \cup  \Quads
\end{equation}
is a Markov basis for $\ker_\zz  ( \Bcal^{\phi}\TFP[\tilde\Acal](\Bcal')^{\phi'})$.  If,
in addition, $\Mcal$ and $\Mcal'$ are Gr\"obner bases for compatible preorders,
then, for any compatible additive preorder $\succeq_\times$ on $\zz^{\Bcal^{\phi}\TFP[\tilde\Acal](\Bcal')^{\phi'}}$, (\ref{eq:codimzeroGB})
is a Gr\"obner basis of $\ker_\zz (\Bcal^{\phi}\TFP[\tilde\Acal](\Bcal')^{\phi'})$.
\end{theorem}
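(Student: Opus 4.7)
The approach is a direct three-stage reduction guided by the factorization $\xi = \phi\circ\psi = \phi'\circ\psi'$. Given $u, u'$ in the same fiber of $\Bcal^\phi \TFP[\tilde\Acal](\Bcal')^{\phi'}$, I build a $\succeq_\times$-non-increasing path from $u$ to $u'$ using successively the three classes $\Lifts(\Mcal)$, $\Lifts(\Mcal')$, and $\Quads$, each of which handles the discrepancy measured by one of the projections $\psi$, $\psi'$, or their common kernel.

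In stage one, $\psi(u)$ and $\psi(u')$ lie in the same fiber of $\Bcal^\phi$, and $\Mcal$ is a $\succeq_\Bcal$-Gr\"obner basis of $\Zker\Bcal^\phi$, so there exists a $\succeq_\Bcal$-non-increasing path between them. I lift each downstairs step by choosing a suitable element of $\Lifts(m)$ applicable to the current table: the parameterization of $\Lifts(m)$ by an auxiliary map $\tau$ satisfying $\phi'(e_{\tau(i)}) = \phi(e_{\sigma_\pm(i)})$ guarantees that at any table admitting $m$ as a downstairs move a compatible lift exists, and the compatibility of $\succeq_\times$ with $\succeq_\Bcal$ through $\psi$ makes the lifted step $\succeq_\times$-non-increasing. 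This produces $u_1$ with $\psi(u_1) = \psi(u')$. In stage two, $u_1 - u' \in \Zker\psi$ implies that $\psi'(u_1)$ and $\psi'(u')$ lie in the same fiber of $(\Bcal')^{\phi'}$, and the symmetric argument using $\Mcal'$ and $\Lifts(\Mcal')$ produces $u_2$ satisfying $\psi(u_2) = \psi(u')$ and $\psi'(u_2) = \psi'(u')$; crucially, lifts of moves in $\Zker(\Bcal')^{\phi'}$ are invisible to $\psi$, so stage two does not undo stage one.

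In stage three, $u_2 - u'$ lies in $\Zker\psi \cap \Zker\psi' \cap \Zker(\Bcal^\phi \TFP[\tilde\Acal](\Bcal')^{\phi'})$, a lattice that decomposes as a direct sum, over $a$ in the image of $\phi$, of lattices of bipartite tables on $\phi^{-1}(a) \times (\phi')^{-1}(a)$ with vanishing row and column sums. Each summand is of Lawrence type, so by Lemma \ref{lem:MB-is-GB} its Graver basis is a minimal Markov basis, and a direct calculation identifies this Graver basis with $\Quads$; moreover, for Lawrence-type lattices the Graver basis is also a universal Gr\"obner basis, so $\Quads$ produces a $\succeq_\times$-non-increasing path from $u_2$ to $u'$ for any compatible $\succeq_\times$.

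The main obstacle is stage one: the chosen lift of each PF-move must be admissible at the current configuration — the table must contain enough mass on the negative support of the lifted move — and simultaneously $\succeq_\times$-non-increasing. Admissibility is ensured by exploiting the flexibility in choosing $\tau$ depending on the current table; non-increase relies crucially on the compatibility hypothesis that $\succeq_\times$-comparisons between distinct $\psi$-fibers are dictated by $\succeq_\Bcal$-comparisons on the base. Combining the three stages yields the Gr\"obner basis statement, and specializing all preorders to $\succeq_{0}$ recovers the Markov basis assertion.
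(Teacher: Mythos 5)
The paper does not prove this theorem --- it is cited from \cite{Sullivant07:TFPs} --- so I evaluate your argument on its own. Your three-stage reduction (correct $\psi$, then $\psi'$, then connect within the common $\psi\times\psi'$-fiber), and your treatment of the first two stages (choosing $\tau$ to make a lift applicable, and invoking the compatibility of $\succeq_\times$ with $\succeq_\Bcal$ and $\succeq_{\Bcal'}$ to certify monotonicity, noting that lifts of $\Bcal$-moves are $\psi'$-invisible and vice versa), are sound. The flaw is in stage three. The lattice $\Zker\psi\cap\Zker\psi'$ does decompose block-by-block into lattices of $|\phi^{-1}(a)|\times|(\phi')^{-1}(a)|$ integer tables with zero row and column sums, but such a block is of Lawrence type in the sense of Lemma \ref{lem:MB-is-GB} only when one side has cardinality at most $2$. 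Already for a $3\times 3$ block the Graver basis contains the primitive $6$-cycle move $e_{11}+e_{22}+e_{33}-e_{12}-e_{23}-e_{31}$, which is not in $\Quads$; so the identification ``Graver basis $=\Quads$'' is false, and the appeal to Lawrence-type universal Gr\"obner bases does not apply.

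Both conclusions you want in stage three are nonetheless true, for simpler reasons. That $\Quads$ is a Markov basis of the block-diagonal zero-margins lattice is the classical connectivity statement for two-way contingency tables with fixed margins, applied block by block --- that is a Markov basis, not a Graver basis, fact. For the Gr\"obner part, observe that the compatibility condition you recall forces any two points of the same $\psi\times\psi'$-fiber to be $\succeq_\times$-equivalent: if $\psi(u_1)=\psi(u_2)$ and $\psi'(u_1)=\psi'(u_2)$, reflexivity gives both implications $u_1\succeq_\times u_2$ and $u_2\succeq_\times u_1$. Hence $\succeq_\times$ is trivial on the sets stage three must connect, and connectivity alone already delivers all three conditions of Definition \ref{def:MB-GB} there; no Graver or universal Gr\"obner basis argument is needed or available.
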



\subsection{Projected fiber intersections}
\label{sec:PFI}

Next, we want to understand the geometry of the
projected fibers $\xi( \Fbf( \Bcal\TFP\Bcal', (b,b')))$.
These have a simple relation to the projected fibers
$ \phi( \Fbf(\Bcal, b))$ and $ \phi'( \Fbf(\Bcal', b'))$.

\begin{lemma}
  \label{lem:proj-vertices}
  $\xi( \Fbf( \Bcal\TFP\Bcal', (b,b'))) =  \phi( \Fbf(\Bcal, b)) \cap \phi'( \Fbf(\Bcal', b'))$.
\end{lemma}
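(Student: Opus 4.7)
The proof splits naturally into the two inclusions, and the geometry is governed by the commutative diagram relating $\psi$, $\psi'$, $\phi$, $\phi'$ and $\xi$.

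For the inclusion $\xi(\Fbf(\Bcal\TFP\Bcal',(b,b'))) \subseteq \phi(\Fbf(\Bcal,b)) \cap \phi'(\Fbf(\Bcal',b'))$, I would simply chase the diagram. If $u\in\Nb^{\Bcal\TFP\Bcal'}$ has $(\Bcal\TFP\Bcal')u = \binom{b}{b'}$, then by definition of $\Bcal\TFP\Bcal'$ the projections satisfy $\Bcal\,\psi(u)=b$ and $\Bcal'\psi'(u)=b'$, so $\psi(u)\in\Fbf(\Bcal,b)$ and $\psi'(u)\in\Fbf(\Bcal',b')$. Since $\xi=\phi\circ\psi=\phi'\circ\psi'$, this gives $\xi(u) \in \phi(\Fbf(\Bcal,b))\cap\phi'(\Fbf(\Bcal',b'))$.

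For the reverse inclusion, I would argue by explicit construction. Pick $a\in\phi(\Fbf(\Bcal,b))\cap\phi'(\Fbf(\Bcal',b'))$ and choose $v\in\Fbf(\Bcal,b)$, $v'\in\Fbf(\Bcal',b')$ with $\phi(v)=\phi'(v')=a$. For each $k\in[t]$, the $k$th coordinate of $a$ equals both $\sum_{i:\phi(i)=k} v_i$ and $\sum_{j:\phi'(j)=k} v'_j$, so the row and column sums match. Hence there exists a nonnegative integer matrix $u^{(k)} = (u^{(k)}_{ij})_{\phi(i)=k,\phi'(j)=k}$ with row sums $(v_i)$ and column sums $(v'_j)$; this is the standard existence of integer transportation matrices with prescribed marginals (e.g.\ build via the northwest-corner rule). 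Defining $u_{ij}:=u^{(k)}_{ij}$ whenever $\phi(i)=\phi'(j)=k$, one checks immediately that $\psi(u)=v$, $\psi'(u)=v'$, hence $(\Bcal\TFP\Bcal')u=\binom{b}{b'}$, and $\xi(u)=\phi(v)=a$.

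The only nontrivial point is the existence of $u^{(k)}$, and this is where I would spend a sentence if I wanted to be fully self-contained. Everything else is bookkeeping using the definition of $\Bcal\TFP\Bcal'$ (columns indexed by pairs $(i,j)$ with $\phi(i)=\phi'(j)$) and the fact that $\psi,\psi'$ are the coordinate-summing projections. No normality or Markov basis hypothesis enters; the lemma is purely combinatorial, which is consistent with its role as a structural statement about projected fibers to be combined with the results of the previous sections.
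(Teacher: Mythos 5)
Your proof is correct and follows essentially the same route as the paper: the first inclusion is the same trivial diagram chase, and the second inclusion constructs an explicit preimage by matching $v$ and $v'$ bucket-by-bucket over the values $k\in[t]$. The only difference is packaging: you invoke the existence of an integer transportation matrix with prescribed marginals (northwest-corner rule), whereas the paper builds the same kind of table explicitly by writing $v=\sum_i e_{\sigma(i)}$ and $v'=\sum_i e_{\sigma'(i)}$ with $\sigma,\sigma'$ sorted so that $\phi\circ\sigma=\phi'\circ\sigma'$, and then pairing summand $i$ with summand $i$; this sorted pairing is precisely one particular transportation solution, so the two arguments coincide in substance.
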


\begin{proof}
  The first inclusion $\xi( \Fbf( \Bcal\TFP\Bcal', (b,b'))) \subseteq \phi( \Fbf(\Bcal, b)) \cap \phi'( \Fbf(\Bcal', b'))$ is
  trivial since $\psi( \Fbf( \Bcal\TFP\Bcal', (b,b'))) \subseteq \Fbf(\Bcal, b)$ and $\psi'( \Fbf( \Bcal\TFP\Bcal',
  (b,b'))) \subseteq \Fbf(\Bcal', b')$.
 
 If $ \phi( \Fbf(\Bcal, b)) \cap \phi'( \Fbf(\Bcal', b'))$ is non-empty, then let 
 $u\in\phi( \Fbf(\Bcal, b)) \cap \phi'( \Fbf(\Bcal', b'))$.  There exist
   $v\in\Fbf(\Bcal, b),v'\in\Fbf(\Bcal', b')$ with $u=\phi(v)=\phi'(v')$.  
   There is a unique representation
   $v=\sum_{i=1}^{r}e_{\sigma(i)}$ and $v'=\sum_{i=1}^{r'}e_{\sigma'(i)}$, 
   where $\sigma(i)\le\sigma(i+1)$ and
   $\sigma'(i)\le\sigma'(i+1)$.  Without loss of generality we may assume that $\phi$ and $\phi'$ are monotonically
   increasing functions on indices.  Then $\phi(\sigma(i))\le\phi(\sigma(i+1))$ 
   and $\phi'(\sigma'(i))\le\phi'(\sigma'(i+1))$.  The
   condition $\phi(v)=\phi'(v')$ implies $r=r'$ and 
   $\phi(\sigma(i))=\phi'(\sigma'(i))$ for all~$i$.  Let
   $w=\sum_{i=1}^{r}e_{\sigma(i),\sigma'(i)}$.  Then $\psi(w)=v$ and $\psi'(w)=v'$.  
   Therefore, $u \in \xi( \Fbf( \Bcal\TFP\Bcal', (b,b')))$.
\end{proof}

By Lemma~\ref{lem:proj-vertices}, the projected fibers $\xi( \Fbf( \Bcal\TFP\Bcal', (b,b')))$ are themselves intersections
of projected fibers of $\phi$ and~$\phi'$.
This motivates the following definition:
\begin{definition}
  A \emph{projected fiber intersection (PFI) Gr\"obner basis} of the toric fiber product is a projected fiber Gr\"obner
  basis for~$\xi$.
\end{definition}

A PFI Gr\"obner basis can be computed as an inequality Markov basis if the projected fibers
$\xi(\Fbf(\Bcal\TFP\Bcal',(b,b')))$ can be described in terms of linear equations and inequalities.  It easily follows
from Lemma~\ref{lem:proj-vertices} that this is the case if the same condition holds true for the projected fibers
$\phi( \Fbf(\Bcal, b))$ and $\phi'(\Fbf(\Bcal',b'))$.  Such inequality representations are easiest to obtain if
$\Nb\Bcal^{\phi}$ and $\Nb(\Bcal')^{\phi'}$ are both normal.  In fact, if both $\Nb\Bcal^{\phi}$ and
$\Nb(\Bcal')^{\phi'}$ are normal, then $\Nb(\Bcal\TFP\Bcal')^{\xi}$ is also normal.  This follows from
Lemma~\ref{lem:cod-zero-connects-fibers} and the fact that
normality is preserved in codimension-zero TFPs~\cite[Theorem~2.5]{EngstromKahleSullivant13:TFP-II} (but not in higher
codimension~\cite{KahleRauh2014:TFPs_and_SPs}).

\begin{remark}
  \label{rem:intersection-in-TF-powers}
  Suppose that $\Bcal=\Bcal'$ and~$\phi=\phi'$, and suppose that all projected fibers have an inequality description
  $\phi(\Fbf(\Bcal,b)) = \{u\in\Lcal + u_{0}(b) : D u \ge c(b) \}$, where the integer matrix $D$ and the lattice $\Lcal$ are
  independent of~$b$.  Then, an inequality Gr\"obner basis for $D$ is a PF Markov basis for~$\phi$ as well as a PFI
  Markov basis for the toric fiber product, because
  \begin{equation*}
    \phi(\Fbf(\Bcal,b))\cap\phi(\Fbf(\Bcal,b')) = \big\{u\in(\Lcal + u_{0}(b))\cap(\Lcal + u_{0}(b')) : D u \ge \max\{c(b),c(b')\} \big\}.
  \end{equation*}
\end{remark}

\subsection[Gluing lifts]{Gluing $\xi$-lifts from $\phi$-lifts and $\phi'$-lifts}
\label{sec:lifting-TFP}

Finally, we show how to lift moves $g\in\Zb^{t}$ along $\xi$ by gluing 
$\phi$-lifts and $\phi'$-lifts of~$g$.
Let $m\in\Zker\Bcal$ and $m'\in\Zker\Bcal'$ such that $\phi(m)=\phi'(m')=g$.  The goal of gluing is to construct a move
$\tilde m$ with $\psi(\tilde m)=m$ and~$\psi'(\tilde m)=m'$.  In general, $\tilde m$ will be larger than both $m$
and~$m'$ (in the sense of the $\ell_{1}$-norm or the degree, as defined later),
but the idea is to construct $\tilde m$ as small as possible.  The first step is to extend $m$ and $m'$ to make them compatible for gluing.

%
Let $v=\phi'(m^{\prime+})-\phi(m^{+})=\phi'(m^{\prime-})-\phi(m^{-})$.  Then
$\phi(m^{+}) + v^{+} = \phi'(m^{\prime+}) + v^{-}$ and $\phi(m^{-}) + v^{+} = \phi'(m^{\prime-}) + v^{-}$.
Choose vectors $\ol m^{+},\ol m^{-}\in\Nb^{n}$ and $\ol m^{\prime+},\ol m^{\prime-}\in\Nb^{n'}$ that satisfy $\phi(\ol
m^{+}-m^{+})=\phi(\ol m^{-}-m^{-})=v^{+}$ and $\phi(\ol m^{\prime+}-m^{\prime+})=\phi(\ol m^{\prime-}-m^{\prime-})=v^{-}$.
Since $\phi(\ol m^{+})=\phi'(\ol m^{\prime+})$ and $\phi(\ol m^{-})=\phi'(\ol m^{\prime-})$, there are functions
$\sigma,\sigma',\tau,\tau'$ satisfying
\begin{align*}
  \ol m^{+}&=\textstyle\sum_{i}e_{\sigma(i)},&   \ol m^{-}&=\textstyle\sum_{j}e_{\tau(j)}, & 
  \ol m^{\prime+}&=\textstyle\sum_{i}e_{\sigma'(i)},&   \ol m^{\prime-}&=\textstyle\sum_{j}e_{\tau'(j)}
\end{align*}
and $\phi(\sigma(i))=\phi'(\sigma'(i))$ and $\phi(\tau(j))=\phi'(\tau'(j))$.  Then the vector
\begin{equation*}
  \tilde m = \textstyle\sum_{i}e_{\sigma_{i},\sigma'_{i}} - \sum_{j}e_{\tau_{j},\tau'_{j}}
\end{equation*}
belongs to $\Zker(\Bcal\TFP\Bcal')$.  We call $\tilde m$ a \emph{glue} of $m$ and~$m'$.  Observe that indeed
$\psi(\tilde m)=m$ and $\psi'(\tilde m)=m'$.  See~\cite{EngstromKahleSullivant13:TFP-II} for a more detailed description
of the gluing procedure.

The set $\Glues(m,m')$ of all
glues of $m$ and $m'$ is finite, since $\phi^{-1}(v^{\pm})\cap\Nb^{n}$ and $\phi^{\prime-1}(v^{\pm})\cap\Nb^{n'}$ are finite.  For
any $\Mcal\subseteq\Zker\Bcal$, $\Mcal'\subseteq\Zker\Bcal'$ denote by $\Glues(\Mcal,\Mcal')$ the set of all glues of
compatible elements of $\Mcal$ and~$\Mcal'$.
The gluing construction has the following crucial property:
\begin{lemma}
  \label{lem:glue-property}
  Let $m\in\Zker\Bcal$, $m'\in\Zker\Bcal'$ with $\phi(m)=\phi'(m')$, and let $w\in\Nb^{\Bcal\TFP\Bcal'}$.  If
  $\psi(w)+m\ge 0$ and $\psi'(w)+m'\ge0$, then there exists $\tilde m\in\Glues(m,m')$ with $w + \tilde m\ge 0$.
\end{lemma}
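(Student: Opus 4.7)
The plan is to construct the target point $z := w+\tilde m$ directly via the pairing argument from the proof of Lemma~\ref{lem:proj-vertices}, and then identify $\tilde m = z-w$ with a glue. I would begin by setting $u := \psi(w)+m$ and $u' := \psi'(w)+m'$; both lie in $\Nb^{n}$ and $\Nb^{n'}$ respectively by hypothesis, and a short computation using $\phi(m)=\phi'(m')$ gives
\[
  \phi(u) \;=\; \xi(w)+\phi(m) \;=\; \xi(w)+\phi'(m') \;=\; \phi'(u').
\]

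Since $u$ and $u'$ have a common image under $\phi$ and $\phi'$, the pairing construction in the proof of Lemma~\ref{lem:proj-vertices} produces an element $z\in\Nb^{\Bcal\TFP\Bcal'}$ with $\psi(z)=u$ and $\psi'(z)=u'$: enumerate $u=\sum_{i}e_{\alpha(i)}$ and $u'=\sum_{i}e_{\alpha'(i)}$ in orders compatible with $\phi$ and $\phi'$ so that $\phi(\alpha(i))=\phi'(\alpha'(i))$ for every $i$, and set $z := \sum_{i}e_{\alpha(i),\alpha'(i)}$. Defining $\tilde m := z-w$, we have $w+\tilde m = z\ge 0$ by construction, and applying $\psi$ and $\psi'$ gives $\psi(\tilde m)=u-\psi(w)=m$ and $\psi'(\tilde m)=m'$.

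It then remains to realize $\tilde m$ as a glue. Writing $\tilde m=\tilde m^{+}-\tilde m^{-}$ with disjoint supports and enumerating $\tilde m^{+}=\sum_{i}e_{\sigma(i),\sigma'(i)}$, $\tilde m^{-}=\sum_{j}e_{\tau(j),\tau'(j)}$, the compatibility conditions $\phi(\sigma(i))=\phi'(\sigma'(i))$ and $\phi(\tau(j))=\phi'(\tau'(j))$ hold automatically because the index set of $\Bcal\TFP\Bcal'$ is exactly $\{(a,b):\phi(a)=\phi'(b)\}$. Setting $\bar m^{\pm}:=\psi(\tilde m^{\pm})$ and $\bar m'^{\pm}:=\psi'(\tilde m^{\pm})$ supplies the padding data, and the required identities $\bar m^{+}-\bar m^{-}=m$, $\bar m'^{+}-\bar m'^{-}=m'$, and $\phi(\bar m^{\pm})=\phi'(\bar m'^{\pm})$ all follow from $\psi(\tilde m)=m$, $\psi'(\tilde m)=m'$, and $\phi\circ\psi=\phi'\circ\psi'=\xi$.

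The main subtlety, and the step requiring care, is this last identification. The paddings $\bar m^{\pm}-m^{\pm}$ produced from $\tilde m = z-w$ need not realize the canonical $v^{+}$ of Section~\ref{sec:lifting-TFP}; in fact, simple examples show that no glue with the minimal padding $v^{+}=(\phi'(m'^{+})-\phi(m^{+}))^{+}$ need satisfy $w+\tilde m\ge 0$. Thus the gluing recipe has to be read as permitting \emph{any} common $\phi$-image for the paddings, not only the coordinate-wise positive part of $v=\phi'(m'^{+})-\phi(m^{+})$. With this flexibility the vector $\bar m^{+}-m^{+}=\bar m^{-}-m^{-}$ extracted from $\tilde m$ is an admissible padding, so $\tilde m\in\Glues(m,m')$ and the proof is complete.
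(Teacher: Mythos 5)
Your direct construction is genuinely different from the paper's own treatment, which gives no argument and simply cites Lemma~4.8 of the Engstr\"om--Kahle--Sullivant paper. The mechanics are sound up through the construction of $\tilde m$: $u:=\psi(w)+m$ and $u':=\psi'(w)+m'$ are nonnegative with $\phi(u)=\phi'(u')$, pairing coordinates as in Lemma~\ref{lem:proj-vertices} produces $z\geq 0$ with $\psi(z)=u$, $\psi'(z)=u'$, and $\tilde m:=z-w$ satisfies $w+\tilde m = z\geq 0$, $\psi(\tilde m)=m$, $\psi'(\tilde m)=m'$. Since $\psi(\tilde m)=m$ with $\psi(\tilde m^{\pm})\geq 0$ forces $\psi(\tilde m^{\pm})\geq m^{\pm}$, the common padding $p:=\psi(\tilde m^{+})-m^{+}=\psi(\tilde m^{-})-m^{-}$ and its analogue $p'$ on the other side are nonnegative with $\phi(p)-\phi'(p')=v$, as you observe.

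The issue you raise in your final paragraph is real, and you are right to flag it --- but the observation needs to be substantiated and carried through to a clean conclusion. The definition of $\Glues$ in Section~\ref{sec:lifting-TFP} pins the paddings to $\phi(p)=v^{+}$ and $\phi'(p')=v^{-}$ exactly; the paper relies on this to claim $\Glues(m,m')$ is finite. Your $\tilde m$ generally has strictly larger padding, so it is \emph{not} in $\Glues(m,m')$ under the stated definition, and indeed the lemma fails as literally written. For a concrete witness, take $\Bcal=\Bcal'$ the $1\times 4$ all-ones matrix, $\Acal=(1\;1)$, $\phi=\phi'$ sending $1,2\mapsto 1$ and $3,4\mapsto 2$, $m=m'=(1,0,-1,0)$, and $w=e_{(3,4)}+e_{(4,3)}$. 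Then the hypotheses hold, $v=0$, and $\Glues(m,m')$ consists of the single element $e_{(1,1)}-e_{(3,3)}$, which fails because $w_{(3,3)}=0$; meanwhile your $\tilde m=e_{(1,1)}+e_{(4,4)}-e_{(3,4)}-e_{(4,3)}$ does work but carries padding with $\phi(p)=(0,1)\neq v^{+}$. So the permissive reading you propose --- allowing any $\phi(p)=v^{+}+r$, $\phi'(p')=v^{-}+r$ with $r\geq 0$ --- is what the lemma actually needs, and it is the one the cited reference must intend. What is missing from your write-up is a reconciliation of this with the paper's finiteness assertion one sentence above the lemma: under the permissive reading $\Glues(m,m')$ is infinite, so one should either locate and quote the precise definition from the cited paper, or note explicitly that only paddings bounded by the degree of the ambient fiber are ever required.
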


\begin{proof}
This is a restatement of Lemma 4.8 of \cite{EngstromKahleSullivant13:TFP-II}. 
\end{proof}

\begin{lemma}
  \label{lem:gluing-lifts}
  Let $\Mcal\subset\Zker\Bcal$ and $\Mcal'\subset\Zker\Bcal'$ be $(\Fcal(\Bcal),\phi,\succeq_{\Bcal})$- and
  $(\Fcal(\Bcal'),\phi',\succeq_{\Bcal'})$-lifts of a $\succeq_{\Acal}$-Gr\"obner basis $\Gcal$ of
 $\xi( \Fcal( \Bcal\TFP\Bcal' ) )$.  Then
  $\Glues(\Mcal,\Mcal')$ is a $(\Fcal(\Bcal\TFP\Bcal'),\xi,\succeq_{\times})$-lift of $\Gcal$.
\end{lemma}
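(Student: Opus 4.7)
The plan is to verify Definition~\ref{def:lift} directly. Fix $w,w'\in\Fbf(\Bcal\TFP\Bcal',(b,b'))$ with $w\succeq_\times w'$ and $g:=\xi(w-w')\in\Gcal$ (so $g\neq 0$). I need to produce $n_0\in\Zker\xi$ and $\tilde m\in\Glues(\Mcal,\Mcal')$ making $w,w+n_0,w+n_0+\tilde m$ a $\succeq_\times$-non-increasing path in the fiber, with $\xi(w+n_0+\tilde m)=\xi(w')$.

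First I push the hypothesis to the factors. Because $\xi(w)\neq\xi(w')$, chaining the compatibility of $\succeq_\times$ with $\xi$ and of $\succeq_\Bcal$ with $\phi$ through $\succeq_\Acal$ yields $\psi(w)\succeq_\Bcal\psi(w')$; symmetrically $\psi'(w)\succeq_{\Bcal'}\psi'(w')$. The $(\Fcal(\Bcal),\phi,\succeq_\Bcal)$-lift applied to $(\psi(w),\psi(w'))$ then supplies $m_0\in\Zker\phi$ and $m\in\Mcal$ with $\psi(w)\succeq_\Bcal\psi(w)+m_0\succeq_\Bcal\psi(w)+m_0+m$ in $\Fbf(\Bcal,b)$ and $\phi(m)=-g$; symmetrically the $\phi'$-lift produces $m_0'$ and $m'$ with $\phi'(m')=-g$. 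Both pairs $(m_0,m_0')$ and $(m,m')$ satisfy the compatibility hypothesis of Lemma~\ref{lem:glue-property}.

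Next I glue in two stages. Applied to $(w;m_0,m_0')$, Lemma~\ref{lem:glue-property} needs only the non-negativities $\psi(w)+m_0\ge 0$ and $\psi'(w)+m_0'\ge 0$, which hold because both are points of the respective fibers; it furnishes $n_0\in\Glues(m_0,m_0')$ with $w+n_0\ge 0$. Since $\psi(n_0)=m_0$ and $\psi'(n_0)=m_0'$, we obtain $\xi(n_0)=\phi(m_0)=0$ and $n_0\in\Zker(\Bcal\TFP\Bcal')$, so $w+n_0\in\Fbf(\Bcal\TFP\Bcal',(b,b'))$. A second application of Lemma~\ref{lem:glue-property}, this time at basepoint $w+n_0$ with the pair $(m,m')$, is legitimate because the required endpoints $\psi(w+n_0)+m=\psi(w)+m_0+m$ and $\psi'(w+n_0)+m'=\psi'(w)+m_0'+m'$ are non-negative (they are the endpoints of the two lifted paths); it yields $\tilde m\in\Glues(m,m')\subseteq\Glues(\Mcal,\Mcal')$ with $w+n_0+\tilde m\ge 0$.

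It remains to check monotonicity and the $\xi$-image. The weaker compatibility of $\succeq_\times$ turns $\psi(w)\succeq_\Bcal\psi(w+n_0)$ together with $\psi'(w)\succeq_{\Bcal'}\psi'(w+n_0)$ into $w\succeq_\times w+n_0$, and analogously $w+n_0\succeq_\times w+n_0+\tilde m$; finally $\xi(w+n_0+\tilde m)=\xi(w)+0+\phi(m)=\xi(w)-g=\xi(w')$. The main subtlety lies in the first step: the compatibility of $\succeq_\times$ runs only in one direction in general, so the transfer of $w\succeq_\times w'$ down to $\psi(w)\succeq_\Bcal\psi(w')$ has to be routed through $\succeq_\Acal$ on the base, which is valid precisely because $g\neq 0$ forces $\xi(w)\neq\xi(w')$. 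A secondary point is that one cannot simply glue $(m,m')$ at $w$: the kernel correction $n_0$ must be performed first, so that the basepoint of the second gluing has the non-negativity required by Lemma~\ref{lem:glue-property}.
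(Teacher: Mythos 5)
Your proof is correct and follows essentially the same route as the paper's: project $w,w'$ along $\psi,\psi'$, invoke Definition~\ref{def:lift} on each factor to obtain $(m_0,m)$ and $(m_0',m')$, apply Lemma~\ref{lem:glue-property} twice (once for the kernel correction, once for the main move), and close by checking monotonicity and the $\xi$-image. You make explicit two points the paper leaves implicit---routing $w\succeq_\times w'$ through $\succeq_\Acal$ (using $\xi(w)\neq\xi(w')$) to justify the hypothesis $\psi(w)\succeq_\Bcal\psi(w')$, and the sequential basepoints in the two gluing steps---but the underlying argument is the same.
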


\begin{proof}
  Suppose that $w_{1},w_{2}\in\Nb^{\Bcal\TFP\Bcal'}$ satisfy $\xi(w_{1}-w_{2})=g\in\Gcal$ and
  $(\Bcal\TFP\Bcal')(w_{1}-w_{2})=0$.  Then $v_{1}=\psi(w_{1})$ and $v_{2}=\psi(w_{2})$ satisfy $\phi(v_{1}-v_{2})=g$
  and $\Bcal(v_{1}-v_{2})=0$.  Since $\Mcal$ lifts~$\Gcal$, there are $m\in\Mcal$ and $m_{0}\in\ker\phi$ as in
  Definition~\ref{def:lift}.  Similarly, $v'_{1}=\psi'(w_{1})$ and $v'_{2}=\psi'(w_{2})$ satisfy
  $\phi'(v'_{1}-v'_{2})=g$ and $\Bcal'(v'_{1}-v'_{2})=0$, so we can find $m'\in\Mcal'$ and $m'_{0}\in\ker\phi'$
  as in Definition~\ref{def:lift}.  By Lemma~\ref{lem:glue-property}, there are $\tilde m_{0}\in\Glues(m_{0},m_{0}')$
  and $\tilde m\in\Glues(m,m')$ 
  such that $w_{1}+\tilde m_{0}\ge 0$ and $w_{1}+\tilde m_{0}+\tilde m\ge 0$.  Let $\tilde m_{1}:= w_{2} - w_{1}-\tilde
  m_{0}-\tilde m$.  Then $\xi(\tilde m_{1})=\phi(v_{2}-v_{1}-m_{0}-m)=0$.
%
  Thus, $\xi(\tilde m_{0})=\xi(\tilde m_{1})=0$, $\xi(\tilde m)=g$ and $(\Bcal\TFP\Bcal')\tilde m_{0}=(\Bcal\TFP\Bcal')\tilde
  m=(\Bcal\TFP\Bcal')\tilde m_{1}=0$.  Moreover the sequence $w_{1}$, $w_{1} + \tilde m_{0}$, $w_{1} + \tilde m_{0} +
  \tilde m$ is non-increasing, due to our compatibility requirements.
  Hence the conditions of Definition~\ref{def:lift} are verified.
\end{proof}

The results of this section are related to the following notion from~\cite{EngstromKahleSullivant13:TFP-II}.
\begin{definition}
  Two Markov bases $\Mcal\subset\Zker\Bcal$, $\Mcal'\subset\Zker\Bcal'$ satisfy the \emph{compatible projection
    property}, if the graph $\phi(\Fbf(\Bcal,b)_{\Mcal})\cap\phi'(\Fbf(\Bcal',b')_{\Mcal'})$ is
  connected for all~$b\in\Nb\Bcal$, $b'\in\Nb\Bcal'$.
  Here, $\phi(\Fbf(\Bcal,b)_{\Mcal})$ denotes the image of the graph $\Fbf(\Bcal,b)_{\Mcal}$
  under~$\phi$, as defined in the introduction.
\end{definition}
Theorem~4.9 in~\cite{EngstromKahleSullivant13:TFP-II} says that if $\Mcal$ and $\Mcal'$ have the compatible projection
property, then the union of $\Glues(\Mcal,\Mcal')$ and a Markov basis of the associated codimension-zero toric fiber
product is a Markov basis of $\Bcal\TFP\Bcal'$.
The proof of Lemma~\ref{lem:gluing-lifts} basically shows that, if $\Mcal$ and $\Mcal'$ lift a PFI Markov basis~$\Gcal$,
then $\phi(\Fbf(\Bcal,b)_{\Mcal})\cap\phi(\Fbf(\Bcal',b')_{\Mcal'}) =
\xi(\Fbf(\Bcal\TFP\Bcal',(b,b')))_{\Gcal}$.  Hence, in this case, $\Mcal$ and $\Mcal'$ have the compatible projection
property.

The compatible projection property is weaker than the property of being lifts.  Sometimes it is possible to find subsets
of lifts which still satisfy the compatible projection property.  In this way, a smaller Markov basis
of~$\Bcal\TFP\Bcal'$ can be found.  For an example see Section~\ref{sec:K4-e}.

We conclude this section with another result from~\cite{EngstromKahleSullivant13:TFP-II}:
\begin{lemma}[Theorem~4.2 in~\cite{EngstromKahleSullivant13:TFP-II}]
  \label{lem:slow-varying-TFP}
  Let $\Bcal\TFP\Bcal'$ be a codimension-one toric fiber product, and let $\Mcal,\Mcal'$ be slow-varying Markov bases of
  $\Bcal$ and $\Bcal'$.  Then $\Mcal$ and $\Mcal'$ satisfy the compatible projection property.
\end{lemma}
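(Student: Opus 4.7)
The plan is to exploit codimension one to reduce the compatible projection property to a combinatorial statement about intervals on an affine line. Let $g\in\Zb^{t}$ be a vector with $\phi(\Mcal)\subseteq\{0,\pm g\}$, and let $g'\in\Zb^{t}$ play the analogous role for $\phi'(\Mcal')$. Because $\Mcal$ is a Markov basis, it generates $\Zker\Bcal$, so $\Zb g=\phi(\Zker\Bcal)\subseteq\Zker\Acal$; likewise $\Zb g'\subseteq\Zker\Acal$. The codimension-one hypothesis forces $\dim\Zker\Acal=1$, so $g$ and $g'$ are commensurable, i.e.\ live on the same line through the origin. If the projected fibers $\phi(\Fbf(\Bcal,b))$ and $\phi'(\Fbf(\Bcal',b'))$ lie on different parallel affine lines then the intersection vertex set is empty and there is nothing to prove. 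Otherwise, after a routine reduction (matching primitive generators and possibly scaling), one may assume $g=g'$ and that both projected fibers lie on the same affine line $u_{b}+\Zb g$.

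Second, I would show that each projected image graph is essentially a path. Since $\Mcal$ is a Markov basis, $\Fbf(\Bcal,b)_{\Mcal}$ is connected, and hence so is its image $\phi(\Fbf(\Bcal,b)_{\Mcal})$. By the slow-varying assumption, every non-loop edge of this image graph has direction exactly $\pm g$. A connected graph whose vertices lie on $u_{b}+\Zb g$ and whose edges only step by $\pm g$ is forced to have a vertex set of the form $\{u_{b}+kg : k_{b}^{-}\le k\le k_{b}^{+}\}$ (a contiguous integer interval), with every consecutive pair $(u_{b}+kg,u_{b}+(k+1)g)$ in fact realized by an edge; otherwise the graph would split at the missing step. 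The analogous statement applies to $\phi'(\Fbf(\Bcal',b')_{\Mcal'})$, giving a contiguous interval $\{u_{b}+kg : k_{b'}^{\prime-}\le k\le k_{b'}^{\prime+}\}$ together with all consecutive edges.

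Finally, the vertex set of the intersection $\phi(\Fbf(\Bcal,b)_{\Mcal})\cap\phi'(\Fbf(\Bcal',b')_{\Mcal'})$ is the intersection of two contiguous integer intervals on the same line, which is itself a contiguous integer interval (possibly empty). Each consecutive pair in this intersection is a consecutive pair in each of the two image graphs, so it appears as an edge in each, and hence also in the intersection graph by the definition of intersection of image graphs. Therefore the intersection is either empty or a path visiting consecutive translates of $g$, and in both cases it is connected, which is exactly the compatible projection property. The main obstacle in a careful write-up is the preliminary step of matching $g$ and $g'$ as elements of the one-dimensional lattice $\Zker\Acal$; once that is pinned down, the entire argument rests on the elementary fact that a connected graph with unit-step edges on a line is a contiguous interval.
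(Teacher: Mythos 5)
The paper does not give a proof here; the lemma is cited from \cite{EngstromKahleSullivant13:TFP-II}, so there is no in-paper argument to compare with. Your interval argument captures the right core idea: the image of a connected graph is connected, a connected graph on an affine line whose non-loop edges all have direction $\pm g$ must have a contiguous vertex set with every consecutive edge present, and the intersection of two contiguous intervals is again a contiguous interval.

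The genuine gap is the sentence asserting that ``after a routine reduction (matching primitive generators and possibly scaling), one may assume $g = g'$.'' There is no such reduction --- one cannot rescale a Markov basis --- and the issue is substantive. As worded in this paper, slow-varying only pins down $g$ as a generator of $\phi(\Zker\Bcal)$, which can be a proper sublattice of $\Zker\Acal$; similarly $g'$ generates $\phi'(\Zker\Bcal')$. If $g = 2g_{0}$ and $g' = 3g_{0}$ for a primitive generator $g_{0}$ of $\Zker\Acal$, then the vertex set of the intersection graph lies in a coset of $\Zb(6g_{0})$, but every non-loop edge of the $\phi$-image graph has direction $\pm 2g_{0}$ and of the $\phi'$-image graph $\pm 3g_{0}$; no edge of either graph spans $6g_{0}$, so the intersection graph can consist of several isolated vertices and the compatible projection property fails. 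What in fact rescues the statement is that in \cite{EngstromKahleSullivant13:TFP-II} the slow-varying definition fixes $g$ to be a generator of $\Zker\Acal$ itself, not merely of $\phi(\Zker\Bcal)$. That forces both $g$ and $g'$ to be $\pm$ the primitive generator of the rank-one lattice $\Zker\Acal$, so $g = \pm g'$ holds automatically and no reduction is needed. You should replace the hand-wave by that observation (equivalently, assume $\phi(\Zker\Bcal) = \phi'(\Zker\Bcal') = \Zker\Acal$); once $g = \pm g'$ is pinned down, the remainder of your argument goes through.
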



\subsection{A simple example}
\label{sec:simple-ex}

  Consider the matrix $\Bcal$ and the map $\phi$ from Example~\ref{ex:simple-lifting}.  Then $\phi$ corresponds to the map
  \begin{equation*}
    1\mapsto 1, \quad 2\mapsto 2, \quad 3\mapsto 2, \quad 4\mapsto 3.
  \end{equation*}
  Let $\Bcal'=\Bcal$, and let
  \begin{equation*}
    \phi' =
    \scalebox{0.9}{$
    \begin{pmatrix}
      0 & 1 & 1 & 0 \\
      1 & 0 & 0 & 0 \\
      0 & 0 & 0 & 1
    \end{pmatrix}$}
  \end{equation*}
  be the map that arises from $\phi$ by switching the role of the first two coordinates in the image.  The corresponding
  toric fiber product is
  \begin{equation*}
    \Bcal\times_{\Acal}\Bcal' =
    \scalebox{0.9}{$
    \begin{pmatrix}
      1 & 1 & 1 & 1 & 1 \\
      0 & 0 & 0 & 1 & 2 \\
      1 & 1 & 1 & 1 & 1 \\
      0 & 1 & 0 & 0 & 2
    \end{pmatrix}$}.
  \end{equation*}

  Using the symmetry between $\phi$ and~$\phi'$, the projected fiber intersections can be described as the set of integer solutions
  of inequalities of the form
  \begin{gather*}
    y_{1} \ge 0,
    \qquad
    y_{2} \ge 0,
    \\
    y_{1} + y_{2} \le c_{1},
    \qquad
    y_{1} + y_{2} \ge c_{2},
    \qquad
    2 y_{1} + y_{2} \le c_{3},
    \qquad
    y_{1} + 2 y_{2} \le c_{4},
  \end{gather*}
  corresponding to the matrix
  \begin{equation*}
    D = \scalebox{0.9}{$
    \begin{pmatrix}
      1 &  0 \\
      0 &  1 \\
      -1 & -1 \\
      1 &  1 \\
      -2 & -1 \\
      -1 & -2
    \end{pmatrix}$}.
  \end{equation*}
  The Markov basis of the lattice generated by the columns of $D$ contains three elements:
  \begin{equation*}
    (0, 1 ,-1, 1 ,-1 ,-2), \quad (1 ,-1, 0, 0 ,-1, 1), \quad (1, 0 ,-1, 1 ,-2 ,-1).
  \end{equation*}
  The inverse images under $D$ are
  $ 
    (0, 1), 
    (1, -1)$ and $
    (1,0),
  $ 
  and so the PF Markov basis is given by
  \begin{equation*}
    \Gcal = \Big\{ g_{1}=(0,1,-1), \quad g_{2}=(1,-1,0), \quad g_{3}=(1,0,-1) \Big\}.
  \end{equation*}

  Each move in $\Gcal$ has a single $\phi$-lift, and the lifted Markov basis is
  \begin{equation*}
    \Mcal = \Big\{ m_{1}=(0,-1,2,-1), \quad m_{2}=(1,-1,0,0), \quad m_{3}=(1,-2,2,-1) \Big\}.
  \end{equation*}
  Both $\Gcal$ and $\Mcal$ are symmetric under the exchange of $y_{1}$ and~$y_{2}$, and so $\Mcal$ is also a
  $\phi'$-lift of~$\Gcal'$.
  We have
  \begin{equation*}
    \phi(m_{1}) = g_{1} = \phi'(m_{3}), \qquad
    \phi(m_{2}) = g_{2} = \phi'(-m_{2}), \qquad
    \phi(m_{3}) = g_{3} = \phi'(m_{1}).
  \end{equation*}
  In each case, one can check that there is just a single glued element:
  \begin{align*}
    \text{Glues}(m_{1},m_{3}) &=
    \Big\{
    \hat m_{1}=(-2, 2, -1, 2, -1)
    \Big\},
    \\
    \text{Glues}(m_{2},-m_{2}) &=
    \Big\{
    \hat m_{2}=(1, 0, -1, 0, 0)
    \Big\},
    \\
    \text{Glues}(m_{3},m_{1}) &=
    \Big\{
    \hat m_{3}=(-1, 2, -2, 2, -1)
    \Big\}.
  \end{align*}
  Thus these three moves form a Markov basis of the TFP.  In fact, it suffices to take the first two moves:
  Suppose that we want to apply $\hat m_{3}$: Then $x_{1},x_{5}\ge 1$ and $x_{3}\ge 2$.  Hence we can apply $\hat m_{2}$.
  The result has $x_{1}\ge 2$ and $x_{3}\ge 2$.  Hence we can apply~$\hat m_{1}$.  But $\hat m_{3}=\hat m_{1} + \hat
  m_{2}$.

  In fact, \verb|4ti2| gives the Markov basis $\{\hat m_{2}, \hat m_{4}\}$ with
    $\hat m_{4} = ( 3, -2, 0, -2, 1)$.
  Observe that $\hat m_{4} = - \hat m_{1} + \hat m_{2}$, and an argument as above shows that $\{\hat m_{2}, \hat m_{4}\}$
  is equivalent to $\{\hat m_{1},\hat m_{2}\}$.


\section{Application to Hierarchical Models}
\label{sec:models}

This section explores our main applications to constructing Markov bases of hierarchical models and gives two more
complex examples.  The Markov basis of the four-cycle is studied in Section~\ref{sec:4-cycle}.  We collect known results
about the no three-way interaction model that allow to compute kernel Gr\"obner bases and PFI Gr\"obner bases for some
values of the parameter~$d$.  Section~\ref{sec:K4-e} contains an example of a codimension-one toric fiber product where
the associated semigroup has holes.  Thus, a PFI Markov basis cannot directly be computed as an inequality Markov basis,
but it is not difficult to adjust our ideas.  The example also illustrates that the Markov basis obtained through our
algorithm is in general too large.  A detailed analysis shows that not all lifted moves are necessary.

Before delving into the examples, let us fix the notation.
Let $\Gamma$ be a simplicial complex with vertex set $V$, and let $d \in \zz^{V}_{\geq 2}$.
For $F \subseteq V$ let $D_{F} :=  \prod_{j \in F} [d_{j}]$.
For each $i = (i_{j})_{j \in V} \in D_{V}$ let
$i_{F}  = (i_{j})_{j \in F}$, be the subvector with
index set $F$.
Let $\calb_{\Gamma,d}$ be the matrix that consists of the following $\# D_{V}$ columns,
one for each $i \in D_{V}$:
$$
b_{i} := \bigoplus_{F \in \text{facet}(\Gamma)}  e_{i_{F}}  \in \;
\bigoplus_{F \in \text{facet}(\Gamma)}  \zz^{D_{F}},
$$ 
where $e_{i_{F}}$ denotes the $i_{F}$th standard unit vector in $\zz^{D_{F}}$.
The matrix $\Bcal_{\Gamma,d}$ is the \emph{design matrix of the hierarchical model specified by~$\Gamma$ and~$d$}.  If $d_{i}=2$ for all~$i\in V$, we speak of a \emph{binary} hierarchical model.
The fibers of $\Bcal_{\Gamma,d}$ have the following natural interpretation: For each facet $F\in\Gamma$, the linear map
of $\Bcal_{\Gamma,d}$ computes the $F$-margins, and thus, the fibers of $\Bcal_{\Gamma,d}$ correspond to sets of
non-negative tensors where a certain number of margins (determined by~$\Gamma$) is fixed.
%

The kernel of $\calb_{\Gamma,d}$ lies in~$\zz^{D_{V}}$.  Elements of $\Zb^{D_{V}}$ are often written in tableau notation,
as the difference of two matrices of indices.  For example, the vector
$$
2e_{111} + e_{222} - e_{112} - e_{121} - e_{211}
$$
is represented in tableau notation as
$$
\scalebox{0.9}{$
\begin{bmatrix}
1  1  1 \\
1  1  1 \\
2  2  2 \\
\end{bmatrix}$} -
\scalebox{0.9}{$
\begin{bmatrix}
1  1  2 \\
1  2  1 \\
2  1  1 \\
\end{bmatrix}$}.
$$

The matrix $\calb_{\Gamma,d}$ is a toric fiber product whenever $\Gamma$ is missing edges, and the associated
codimension zero toric fiber product can also be described by a simplicial complex obtained by filling in the separator:

\begin{proposition}[Propositions~5.1 and 5.2 in~\cite{EngstromKahleSullivant13:TFP-II}]
\label{prop:tfp-hierarchical}
Let $\Gamma$ be a simplicial complex on $V$.  Let $V_{1}, V_{2} \subseteq V$ such
that $V = V_{1} \cup V_{2}$, and $\Gamma = \Gamma|_{V_{1}} \cup \Gamma|_{V_{2}}$.
Let $S = V_{1} \cap V_{2}$.  Then:
\begin{enumerate}
\item $\calb_{\Gamma,d} = \calb_{\Gamma|_{V_{1}}, d_{V_{1}}} \TFP[\calb_{\Gamma|_{S}, d_{S}}] \calb_{\Gamma|_{V_{2}},
    d_{V_{2}}}$.  The $\calb_{\Gamma|_{S}, d_{S}}$-grading of $\calb_{\Gamma|_{V_{1}}, d_{V_{1}}}$ and
  $\calb_{\Gamma|_{V_{2}}, d_{V_{2}}}$ is given by the marginalization maps $\phi:\Zb^{D_{V_{1}}}\to\Zb^{D_{S}}$ and
  $\phi':\Zb^{D_{V_{2}}}\to\Zb^{D_{S}}$.  Thus, the $\calb_{\Gamma|_{S}, d_{S}}$-grading of the product
  $\calb_{\Gamma,d}$ is also given by the marginalization map $\xi:\Zb^{D_{V}}\to\Zb^{D_{S}}$.
\item Let $\tilde\Gamma := \Gamma \cup 2^S$.  Then
$
\calb^{\phi}_{\Gamma|_{V_{1}}, d_{V_{1}}}  \times_{\tilde\calb_{2^{S}, d_{S}}}
\calb^{\phi'}_{\Gamma|_{V_{2}}, d_{V_{2}}}  =  \Bcal_{\tilde\Gamma, d}. 
$
\end{enumerate}
\end{proposition}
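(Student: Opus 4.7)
The plan is to verify both parts by careful bookkeeping of the column and row structures of the matrices involved; no deep argument is needed, only tracing definitions.

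For part 1, I would start by noting that since $V = V_{1} \cup V_{2}$ and $\Gamma = \Gamma|_{V_{1}} \cup \Gamma|_{V_{2}}$, every facet of $\Gamma$ is contained entirely in $V_{1}$ or entirely in $V_{2}$ (by maximality). Hence for each $i \in D_{V}$, the column $b_{i}$ of $\calb_{\Gamma,d}$ decomposes as a direct sum $b_{i}^{(1)} \oplus b_{i}^{(2)}$, where $b_{i}^{(k)}$ is a column of $\calb_{\Gamma|_{V_{k}},d_{V_{k}}}$ depending only on $i_{V_{k}}$. Thus the column set of $\calb_{\Gamma,d}$ is in bijection with pairs $(i_{V_{1}}, i_{V_{2}}) \in D_{V_{1}} \times D_{V_{2}}$ satisfying $(i_{V_{1}})_{S} = (i_{V_{2}})_{S}$. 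This is exactly the combinatorial description of $\calb_{\Gamma|_{V_{1}},d_{V_{1}}} \TFP[\calb_{\Gamma|_{S},d_{S}}] \calb_{\Gamma|_{V_{2}},d_{V_{2}}}$, once the maps $\phi, \phi'$ are identified with the marginalizations $D_{V_{k}} \to D_{S}$, $i_{V_{k}} \mapsto i_{S}$.

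To confirm these marginalization maps yield a valid $\calb_{\Gamma|_{S},d_{S}}$-grading, I would check the kernel condition $\phi(\Zker \calb_{\Gamma|_{V_{1}},d_{V_{1}}}) \subseteq \Zker \calb_{\Gamma|_{S},d_{S}}$. If $v \in \Zb^{D_{V_{1}}}$ has zero $F$-margin for every facet $F$ of $\Gamma|_{V_{1}}$, then each facet $F'$ of $\Gamma|_{S}$ satisfies $F' \subseteq S \subseteq V_{1}$, so $F'$ is a face of $\Gamma|_{V_{1}}$ and is contained in some facet $F$ of $\Gamma|_{V_{1}}$. Then the $F'$-margin of $\phi(v)$ equals the $F'$-margin of $v$, which is a further marginalization of the vanishing $F$-margin, hence zero. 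The symmetric reasoning handles $\phi'$, and the equality $\xi = \phi \circ \psi = \phi' \circ \psi'$ follows directly from the definitions of the projections $\psi,\psi'$ onto the two factors.

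For part 2, the key observation is that $\calb^{\phi}_{\Gamma|_{V_{1}},d_{V_{1}}}$ augments $\calb_{\Gamma|_{V_{1}},d_{V_{1}}}$ with extra rows given by $\phi(e_{i_{V_{1}}}) = e_{i_{S}}$, which is precisely the row block of $\calb_{\Gamma|_{V_{1}} \cup 2^{S},\,d_{V_{1}}}$ corresponding to the new facet~$S$. Hence $\calb^{\phi}_{\Gamma|_{V_{1}},d_{V_{1}}} = \calb_{\Gamma|_{V_{1}} \cup 2^{S},\,d_{V_{1}}}$ up to a row reordering, and analogously for $\calb^{\phi'}_{\Gamma|_{V_{2}},d_{V_{2}}}$. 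Since $\calb_{2^{S},d_{S}}$ is the identity matrix on $D_{S}$, the asserted codimension zero TFP is obtained by applying part~1 to the complex $\tilde\Gamma = \Gamma \cup 2^{S}$ with the same decomposition $V = V_{1} \cup V_{2}$, using $\tilde\Gamma|_{V_{k}} = \Gamma|_{V_{k}} \cup 2^{S}$ and $\tilde\Gamma|_{S} = 2^{S}$. The main obstacle is purely notational: one must keep track of the direct sum decomposition of the row space into facet blocks and the fact that distinct facets of $\Gamma|_{V_{1}}$ may all contain a given facet $F'$ of $\Gamma|_{S}$, but either choice of containing facet gives the same $F'$-margin, so no ambiguity remains.
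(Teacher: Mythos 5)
This proposition is quoted in the paper as a cited result from Engstr\"om, Kahle and Sullivant (their Propositions~5.1 and~5.2), and the paper offers no proof of its own, so there is no paper proof to compare against. Your reconstruction is sensible and captures the central bookkeeping correctly: the hypothesis $\Gamma=\Gamma|_{V_1}\cup\Gamma|_{V_2}$ forces every facet into $V_1$ or $V_2$, the column index set of the TFP is identified with $D_V$ via the compatibility $(i_{V_1})_S=(i_{V_2})_S$, the grading condition $\phi(\ker_{\Zb}\calb_{\Gamma|_{V_1}})\subseteq\ker_{\Zb}\calb_{\Gamma|_S}$ follows because every facet of $\Gamma|_S$ is contained in some facet of $\Gamma|_{V_1}$, and part~2 correctly reduces to part~1 applied to $\tilde\Gamma=\Gamma\cup 2^S$ once one notices that $\calb^\phi_{\Gamma|_{V_k},d_{V_k}}$ differs from $\calb_{\Gamma|_{V_k}\cup 2^S,d_{V_k}}$ only by a row permutation and that $\calb_{2^S,d_S}$ is the identity.

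There is one gap you wave at but do not resolve. You write $b_i=b_i^{(1)}\oplus b_i^{(2)}$ as though the row space $\bigoplus_{F\in\text{facet}(\Gamma)}\Zb^{D_F}$ decomposed as the direct sum of $\bigoplus_{F\in\text{facet}(\Gamma|_{V_1})}\Zb^{D_F}$ and $\bigoplus_{F\in\text{facet}(\Gamma|_{V_2})}\Zb^{D_F}$. This is false whenever a facet of $\Gamma$ lies in $S=V_1\cap V_2$ (it is counted once on the left and twice on the right) or whenever $\Gamma|_{V_k}$ has facets contained in $S$ that are not facets of $\Gamma$. For instance, with facets $\{1,2\},\{3\},\{4\}$ and $V_1=\{1,2,3\}$, $V_2=\{3,4\}$, the TFP matrix has an extra $\{3\}$-block. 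The matrices then have different numbers of rows, so the asserted equality of matrices can only hold up to deleting redundant rows (equivalently, as an equality of kernels or toric ideals, which is what actually matters for everything in this paper). Your concluding remark about ``no ambiguity'' addresses a different worry (a facet of $\Gamma|_S$ sitting inside several facets of $\Gamma|_{V_1}$); it does not dispose of the row-count mismatch. A clean write-up should either add the hypothesis that no facet of $\Gamma|_{V_k}$ is contained in $S$, or state and use explicitly that $\calb$ and $\calb'$ are being identified whenever one is obtained from the other by appending rows in the rational row span. With that caveat made explicit, your argument is correct.
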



Normality of $\nn\Bcal_{\tilde\Gamma, d}$ plays an important role in easily
determining a PF Markov basis.
Only in certain special cases do we possess classifications of normal hierarchical models.

\begin{theorem}\textup{\cite{BHIKS11:Challenging_Hilbert_bases}}
  \label{thm:C3-normal}
Let $\Gamma = [12][13][23]$ be a $3$-cycle (also called ``no three-way interaction model").
Then $\nn\Bcal_{\Gamma, d}$ is normal if and only if, up to symmetry, $d$ is one of:
  \begin{equation*}
    (3,4,4),\quad
    (3,4,5),\quad
    (3,5,5),\quad
    (2,p,q),\quad
    (3,3,q),\quad
    \text{ with $p, q \in \nn$}.
  \end{equation*}
\end{theorem}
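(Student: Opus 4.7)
My plan is to prove the two directions separately: first establish normality for the tuples in the list, and then show non\-normality for all other $d$.

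For the ``if'' direction, I would split the listed cases by whether they form an infinite family or are sporadic. The case $d=(2,p,q)$ is the two\-slice model, and $\Bcal_{\Gamma,(2,p,q)}$ can be analyzed by a flattening argument: fixing the two margins $i_1i_2$ and $i_1i_3$ amounts, once the $i_1=1$ slice is fixed, to determining the $i_1=2$ slice from its row and column sums, which is a $p\times q$ transportation problem. Normality of the transportation polytope (a classical fact) transports back to normality of $\Nb\Bcal_{\Gamma,(2,p,q)}$. For the infinite family $d=(3,3,q)$, my plan is to use induction on $q$ via Proposition~\ref{prop:tfp-hierarchical} applied to a decomposition in the third coordinate: although the three\-cycle itself does not decompose along a vertex separator of $\Gamma$, one can split the range of $i_3$ and express $\Bcal_{\Gamma,(3,3,q)}$ as a toric fiber product of two smaller copies glued along a common margin; then invoke~\cite[Theorem~2.5]{EngstromKahleSullivant13:TFP-II} on preservation of normality under codimension\-zero products, together with a base case computation. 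The three sporadic cases $(3,4,4),(3,4,5),(3,5,5)$ would be handled by a direct \verb|Normaliz| computation.

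For the ``only if'' direction, the strategy is to exhibit a hole for every $d=(d_1,d_2,d_3)$ with $d_1\le d_2\le d_3$ outside the list. The key tool is a \textbf{monotonicity lemma}: if $\Nb\Bcal_{\Gamma,d}$ has a hole and $d\le d'$ coordinate\-wise, then $\Nb\Bcal_{\Gamma,d'}$ also has a hole. This follows from the fact that $\Bcal_{\Gamma,d}$ occurs as a coordinate\-subsystem of $\Bcal_{\Gamma,d'}$ so that a hole in the smaller semigroup extends by zero\-padding to an element of $N\Bcal_{\Gamma,d'}\setminus\Nb\Bcal_{\Gamma,d'}$; the saturation step requires checking that the witnessing lattice relation survives the embedding, which it does because the marginalization maps are compatible with the coordinate inclusion. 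Given this lemma, the verification reduces to identifying finitely many minimal non\-normal cases, namely $(4,4,4)$, $(3,4,6)$, $(3,5,6)$ and $(3,6,6)$, and exhibiting an explicit hole in each; for $(4,4,4)$ one can use the classical ``$3\times3\times3$'' style hole built from a fractional magic cube, and the other three minimal holes are small finite computations.

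The main obstacle is the ``only if'' direction, specifically making the monotonicity lemma airtight and confirming that the four proposed minimal cases really do cover every $d$ outside the list. The subtlety is the interplay between the three coordinates: one must check, for instance, that no clever argument rescues $(3,4,d_3)$ for $d_3\ge 6$ and that $(3,5,d_3)$ for $d_3\ge 6$ is indeed captured by the lift from $(3,5,6)$ rather than requiring its own witness. Once monotonicity is secured, the remaining work is combinatorial bookkeeping: enumerate all $(d_1,d_2,d_3)$ with $d_1\le d_2\le d_3$ not in the listed set and certify that each one dominates one of $(4,4,4),(3,4,6),(3,5,6),(3,6,6)$.
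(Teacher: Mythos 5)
The paper does not prove Theorem~\ref{thm:C3-normal}; it imports the classification verbatim from~\cite{BHIKS11:Challenging_Hilbert_bases}, so there is no internal proof to compare against. Judged on its own merits, your scaffold --- transportation-polytope reduction for $(2,p,q)$, computer check for the three sporadic tuples, and a monotonicity lemma plus minimal witnesses for the ``only if'' direction --- is sensible, and the monotonicity lemma you flag as delicate is in fact fine: zero-padding a hole $h\in N\Bcal_{\Gamma,d}\setminus\Nb\Bcal_{\Gamma,d}$ gives $\bar h\in\Zb\Bcal_{\Gamma,d'}\cap\Rb_{\ge 0}\Bcal_{\Gamma,d'}$, and every column of $\Bcal_{\Gamma,d'}$ indexed by a triple with a new level contributes a strictly positive entry to a margin coordinate on which $\bar h$ vanishes, so any nonnegative integer representation of $\bar h$ is supported on old columns and would pull back to a representation of $h$, a contradiction. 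Your minimal witness set $(4,4,4),(3,4,6),(3,5,6),(3,6,6)$ is also the right one for dominating every $d_1\le d_2\le d_3$ outside the normal list.

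The genuine gap is the $(3,3,q)$ case. Splitting the range $[q]=[q_1]\sqcup\{q_1+1,\dots,q\}$ does \emph{not} realize $\Bcal_{C_3,(3,3,q)}$ as a toric fiber product $\Bcal_{C_3,(3,3,q_1)}\TFP[\Acal]\Bcal_{C_3,(3,3,q_2)}$ over the $[12]$-margin. The toric fiber product has columns indexed by \emph{pairs} $(i,j)$ with $\phi(i)=\phi'(j)$, so gluing two copies of the $3$-cycle over the $[12]$-margin produces a matrix whose columns are indexed by $(i_1,i_2,i_3,j_3)$; that is precisely the four-variable model $\Bcal_{\tilde C_4,(3,3,q_1,q_2)}$ of Proposition~\ref{prop:tfp-hierarchical}, not $\Bcal_{C_3,(3,3,q_1+q_2)}$. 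Partitioning the columns of $\Bcal_{C_3,(3,3,q)}$ by the value of $i_3$ gives something closer to a pushout than a fiber product, and neither Proposition~\ref{prop:tfp-hierarchical} nor \cite[Theorem~2.5]{EngstromKahleSullivant13:TFP-II} applies. Normality of $3\times 3\times q$ for all $q$ is genuinely nontrivial (it is one of the substantive results in the paper you would be citing), so as written your inductive step fails and that branch of the ``if'' direction is unproven.
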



\subsection{The 4-cycle}
\label{sec:4-cycle}

\begin{figure}
  \centering
  \begin{tikzpicture}[scale=0.5]
    \node at (-0.3,1) {a)};
    \fill (1,1) circle (4pt);
    \fill (0,0) circle (2pt);
    \fill (1,-1) circle (4pt);
    \draw (1,1) -- (0,0) -- (1,-1);
    \node at (2,0) {$\times$};
    \fill (3,1) circle (4pt);
    \fill (4,0) circle (2pt);
    \fill (3,-1) circle (4pt);
    \draw (3,1) -- (4,0) -- (3,-1);
    \node at (5,0) {$=$};
    \fill (7,1) circle (4pt);
    \fill (6,0) circle (2pt);
    \fill (7,-1) circle (4pt);
    \fill (8,0) circle (2pt);
    \draw (7,1) -- (6,0) -- (7,-1) -- (8,0) -- cycle;
  \end{tikzpicture}
  \hfil
  \begin{tikzpicture}[scale=0.5]
    \node at (-0.3,1) {b)};
    \fill (1,1) circle (4pt);
    \fill (0,0) circle (2pt);
    \fill (1,-1) circle (4pt);
    \draw (1,1) -- (0,0) -- (1,-1) -- (1,1);
    \node at (2,0) {$\times$};
    \fill (3,1) circle (4pt);
    \fill (4,0) circle (2pt);
    \fill (3,-1) circle (4pt);
    \draw (3,1) -- (4,0) -- (3,-1) -- (3,1);
    \node at (5,0) {$=$};
    \fill (7,1) circle (4pt);
    \fill (6,0) circle (2pt);
    \fill (7,-1) circle (4pt);
    \fill (8,0) circle (2pt);
    \draw (7,1) -- (6,0) -- (7,-1) -- (8,0) -- (7,1) -- (7,-1);
  \end{tikzpicture}
  \caption{a) The 4-cycle $C_{4}$ as a toric fiber product.  b) $\tilde C_{4}$ as a codimension-zero toric
    fiber product.}
  \label{fig:4cycle}
\end{figure}
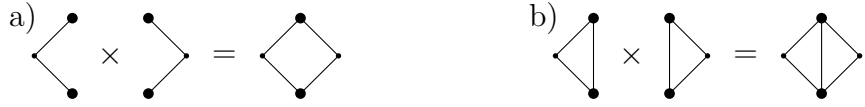
In this section, we use the toric fiber product and lifting techniques
to construct Markov and Gr\"obner bases of the $4$-cycle model $\Gamma = C_{4} := [12][13][24][34]$,
for various values of~$d$.
In the case $d_{1}=d_{4}=2$ (two opposite binary vertices), Markov bases were already computed in~\cite{KRS14:Positive_Margins_and_Prim_Dec}
and~\cite{EngstromKahleSullivant13:TFP-II}.
We apply Proposition~\ref{prop:tfp-hierarchical} with $V_1 = \{1,2,3\}$ and
$V_2 = \{2,3,4\}$, so that $\Gamma_1 = [12][13]$ and
$\Gamma_2 = [24][34]$; see Figure~\ref{fig:4cycle}a).
In fact, our results easily generalize to the complete bipartite graph~$K_{2,N}$, which arises by iterating the toric fiber product, as detailed in Section~\ref{sec:degree-bounds}.

\subsubsection{The associated codimension zero product}

By Proposition~\ref{prop:tfp-hierarchical}, the associated codimension zero product is the hierarchical model on the simplicial complex $\tilde C_{4} := [12][13][23][24][34]$; see Figure~\ref{fig:4cycle}b).  This is obtained by gluing the two
triangles $\tilde\Gamma_{1}$ and $\tilde\Gamma_{2}$ along an edge.  Theorem~\ref{thm:codimzero} can be used to construct
the Markov basis in this case, provided we know the Markov bases for $\tilde\Gamma_{1}$ and~$\tilde\Gamma_{2}$.
The Markov bases of triangles are not known in general, but
are simple to compute in some instances~\cite{DiaconisSturmfels1998}.

\begin{theorem}
\label{thm:mb2xpxq}
Let $C_{3}:=[12][13][23]$ be a triangle, and let $d = (p,2,r)$.  For any sequences $i : = i_1, \ldots, i_k \in [p]$ and $j:= j_1, \ldots, j_k \in [r]$ with pairwise
distinct entries, let
$$
f_{i,j} :=
\sum_{t = 1}^k (
e_{i_t,1, j_t} - e_{i_t,2, j_t} + e_{i_t,2, j_{t+1}} - e_{i_t,1, j_{t+1}} )
$$
where $j_{k+1} := j_1$.
Then
$$\Mcal =  \{f_{i,j}  :  k = 2, \ldots, \min(p,r), i : = i_1, \ldots, i_k \in [p], j:= j_1, \ldots, j_k \in [r] \}$$
is the Graver basis of $\ker_\zz \Bcal_{C_{3}, d}$.  In particular, $\Mcal$ is a Gr\"obner basis for any additive preorder.
\end{theorem}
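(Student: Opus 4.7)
The plan is to exploit the binary middle index to reduce the problem to the classical independence model on a $p \times r$ table. First, I would observe that any $v \in \ker_\zz \Bcal_{C_3,(p,2,r)}$ is determined by its layer-$1$ slice: the $(1,3)$-margin constraint $\sum_s v_{i,s,j} = 0$ forces $v_{i,2,j} = -v_{i,1,j}$, so $v$ is determined by $\psi(v) := (v_{i,1,j})_{i,j} \in \zz^{p \times r}$. The remaining $(1,2)$- and $(2,3)$-margin equations then translate into the zero row-sum and zero column-sum conditions on $\psi(v)$. Hence $\psi$ is a lattice isomorphism from $\ker_\zz \Bcal_{C_3,(p,2,r)}$ onto $\ker_\zz \Bcal_{[1][2],(p,r)}$, the kernel of the $p \times r$ independence model.

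Next I would observe that $\psi$ is \emph{conformal}: two elements $u,v \in \ker_\zz \Bcal_{C_3,(p,2,r)}$ are sign-consistent if and only if $\psi(u)$ and $\psi(v)$ are sign-consistent, because the relation $v_{i,2,j} = -v_{i,1,j}$ makes sign-consistency on the layer-$1$ slice equivalent to sign-consistency on the layer-$2$ slice. Consequently $\psi$ restricts to a bijection between the Graver basis of $\ker_\zz \Bcal_{C_3,(p,2,r)}$ and that of $\ker_\zz \Bcal_{[1][2],(p,r)}$.

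The final input is the classical identification of the Graver basis of the bipartite independence model: it consists exactly of the primitive cycle moves $\sum_{t=1}^k (e_{i_t,j_t} - e_{i_t,j_{t+1}})$ (with $j_{k+1}:=j_1$) for distinct $i_1,\ldots,i_k \in [p]$ and distinct $j_1,\ldots,j_k \in [r]$, $k \geq 2$ (see, e.g., \cite{Sturmfels1996:GBCP}). A direct proof has two steps: (a) each cycle move is primitive, because any sign-consistent kernel sub-vector supported on an alternating cycle of $K_{p,r}$ is forced by the zero-margin conditions to be either zero or the whole move; (b) any nonzero $M$ in the kernel can be written as a conformal sum of such primitive cycle moves by a greedy walk that alternates between a positive entry of $M$ and a negative entry in the same row (then a positive in the same column, etc.), which must close into a cycle by finiteness, yielding a primitive cycle move sign-consistent with $M$ that one subtracts and iterates. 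One checks immediately that $\psi(f_{i,j})$ is exactly such a primitive cycle move, so $\Mcal$ is identified with the Graver basis of $\ker_\zz \Bcal_{C_3,(p,2,r)}$.

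The statement that $\Mcal$ is a Gröbner basis for any additive preorder is then immediate from the fact (noted after Lemma~\ref{lem:conformal-redundancy}) that the Graver basis is a universal Gröbner basis. The only real technical input is the classical Graver basis of the bipartite independence model; everything else is a direct transport through the conformal isomorphism $\psi$, which is where the hypothesis $d_2 = 2$ is essential, since it is what makes the layer-$2$ slice a deterministic function of the layer-$1$ slice.
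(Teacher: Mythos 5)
The paper does not actually present a proof of this theorem; it is stated as a known result with a pointer to \cite{DiaconisSturmfels1998}, so there is no authorial proof to compare against. Your argument is correct and, I would say, the natural one: the map $\psi$ to the layer-$1$ slice is a lattice isomorphism onto the kernel of the $p\times r$ independence model (this is exactly where $d_{2}=2$ is used), and because $v_{i,2,j}=-v_{i,1,j}$ the isomorphism preserves sign-consistency and hence carries Graver bases to Graver bases. The Graver basis of the bipartite independence model is the set of signed alternating cycles, which one can see either from the greedy walk/conformal-decomposition argument you sketch or directly from total unimodularity of the $K_{p,r}$ incidence matrix (for unimodular matrices, Graver basis equals circuits), and $\psi(f_{i,j})$ is precisely such a cycle move. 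The Gr\"obner-basis claim then follows because a Graver basis is a universal Gr\"obner basis, as the paper notes after Lemma~\ref{lem:conformal-redundancy}. No gaps.
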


With the help of Theorems~\ref{thm:mb2xpxq} and~\ref{thm:codimzero}, it is easy to
construct a Gr\"obner basis of $\tilde C_{4}$ when $d = (p,2,r,q)$.

\begin{example}\label{ex:codim0p23q}
Let $d = (p,2,3,q)$.  A Gr\"obner basis for $\ker_\zz \Bcal_{\tilde C_{4},d}$
consists of the moves
$$
\scalebox{0.9}{$
\begin{bmatrix}
a_{1}  b   c  e_{1} \\
a_{2}  b   c  e_{2} 
\end{bmatrix}$}-
\scalebox{0.9}{$
\begin{bmatrix}
a_{1} b c e_{2} \\
a_{2} b c e_{1} 
\end{bmatrix}$},
\scalebox{0.9}{$
\begin{bmatrix}
a_{1} 1  c_1 e_{1} \\
a_{2} 2  c_1 e_{2} \\
a_{2} 1  c_2 e_{3} \\
a_{1} 2  c_2 e_{4} \\
\end{bmatrix}$}-
\scalebox{0.9}{$
\begin{bmatrix}
a_{2} 1  c_1 e_{1} \\
a_{1} 2  c_1 e_{2} \\
a_{1} 1  c_2 e_{3} \\
a_{2} 2  c_2 e_{4} \\
\end{bmatrix}$},
\scalebox{0.9}{$
\begin{bmatrix}
a_{1} 1  c_1 e_{1} \\
a_{2} 2  c_1 e_{2} \\
a_{3} 1  c_2 e_{2} \\
a_{4} 2  c_2 e_{1} \\
\end{bmatrix}$}-
\scalebox{0.9}{$
\begin{bmatrix}
a_{1} 1  c_1 e_{2} \\
a_{2} 2  c_1 e_{1} \\
a_{3} 1  c_2 e_{1} \\
a_{4} 2  c_2 e_{2} \\
\end{bmatrix}$},
$$
$$
\scalebox{0.9}{$
\begin{bmatrix}
a_{1} 1  1 e_{1} \\
a_{2} 1  2 e_{2} \\
a_{3} 1  3 e_{3} \\
a_{2} 2  1 e_{4} \\
a_{3} 2  2 e_{5} \\
a_{1} 2  3 e_{6} \\
\end{bmatrix}$} -
\scalebox{0.9}{$
\begin{bmatrix}
a_{2} 1  1 e_{1} \\
a_{3} 1  2 e_{2} \\
a_{1} 1  3 e_{3} \\
a_{1} 2  1 e_{4} \\
a_{2} 2  2 e_{5} \\
a_{3} 2  3 e_{6} \\
\end{bmatrix}$},
\scalebox{0.9}{$
\begin{bmatrix}
a_{1} 1  1 e_{1} \\
a_{2} 1  2 e_{2} \\
a_{3} 1  3 e_{3} \\
a_{4} 2  1 e_{2} \\
a_{5} 2  2 e_{3} \\
a_{6} 2  3 e_{1} \\
\end{bmatrix}$} -
\scalebox{0.9}{$
\begin{bmatrix}
a_{1} 1  1 e_{2} \\
a_{2} 1  2 e_{3} \\
a_{3} 1  3 e_{1} \\
a_{4} 2  1 e_{1} \\
a_{5} 2  2 e_{2} \\
a_{6} 2  3 e_{3} \\
\end{bmatrix}$},
$$
where $a, a_1,a_2, \ldots, a_6 \in [p]$, $b \in [2]$, $c, c_1, c_2 \in [3]$
and $e,e_1, e_2, \ldots, e_6 \in [q]$.
This Gr\"obner basis works for any choice of preorders
$\succeq_{\times},\succeq_{\Bcal},\succeq_{\Bcal'},\succeq_{\Acal}$ that satisfy the compatibility conditions from
Section~\ref{sec:TFP}.
\qed
\end{example}

\subsubsection{The projected fibers}

Next we describe a PF Gr\"obner basis, associated with the projection
$$\phi: \zz^{D_{V_1}}  \rightarrow  \zz^{D_{23}}, \quad  e_{i_1,i_2,i_3}  \rightarrow
e_{i_2, i_3},$$ to the missing $[23]$ margin.  We first need a suitable description of the projected fibers $\phi (\Fcal(
\Bcal_{\Gamma_1, d_{V_1}}))$.  We are mostly interested in the case where we can find an inequality description.  Then,
according to Remark~\ref{rem:intersection-in-TF-powers}, an inequality Gr\"obner basis for this inequality description
will be a PFI Gr\"obner basis.
We assume that $d = (p,2,r)$, so that $\nn\Bcal_{C_{3}, d_{V_1}}$ 
is normal (Theorem~\ref{thm:C3-normal}).  The facets of $\Rb_{\ge}\Bcal_{C_{3}, d_{V_1}}$ are well-studied. 
In the case $d = (p,2,r)$ the result is~\cite{Vlach86:3D_planar_transportation_problem}:

\begin{proposition}\label{prop:3cycleineq}
Let $d = (p,2,r)$.  The cone
 $\rr_\geq \Bcal_{C_{3}, d}$ is the solution to the following
 system of inequalities:
 \begin{gather*}
y^{12}_{ij} \geq 0, \quad y^{13}_{ik} \geq 0, \quad  y^{23}_{jk} \geq 0, \\
y^1_i  -  y^{12}_{ij}  \geq 0, \quad  y^2_j  -  y^{23}_{jk}  \geq 0, \quad y^3_k  -  y^{13}_{ik} \geq 0, \\
y^1_i  -  y^{13}_{ik}  \geq 0, \quad  y^2_j  -  y^{12}_{ij} \geq 0,  \quad y^3_k  -  y^{23}_{jk} \geq 0, \\
y^\emptyset - y^1_i - y^2_j  + y^{12}_{ij}  \geq 0, \quad 
y^\emptyset - y^1_i - y^3_k  + y^{13}_{ik}  \geq 0, \quad 
y^\emptyset - y^2_j - y^3_k  + y^{23}_{jk}  \geq 0 \\
\sum_{i \in A, k \in B} y^{13}_{ik} + \sum_{i \in A} ( y^{12}_{i2} - y^1_i) + \sum_{k \in B} (y^{23}_{2k} - y^3_k) -
y^2_2 + p^\emptyset \geq 0
\\
\sum_{i \in A, k \in B} y^{13}_{ik} - \sum_{i \in A} ( y^{12}_{i2} + y^1_i) - \sum_{k \in B} (y^{23}_{2k} + y^3_k) +
y^2_2 \geq 0
\\
-\sum_{i \in A, k \in B} y^{13}_{ik} + \sum_{i \in A} ( y^{12}_{i2} - y^1_i) -\sum_{k \in B} (y^{23}_{2k} - y^3_k) -
y^2_2 \geq 0
\\
-\sum_{i \in A, k \in B} y^{13}_{ik} - \sum_{i \in A} ( y^{12}_{i2} - y^1_i) + \sum_{k \in B} (y^{23}_{2k} - y^3_k) -
y^2_2 \geq 0.
\end{gather*}
Here, $y^{F}_{i_{F}}$ is the coordinate corresponding to the unit vector $e_{i_{F}}$ corresponding to the
$F$-marginal taking the value~$i_{F}$.
\end{proposition}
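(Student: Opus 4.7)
The plan is to prove the two standard directions: (i) every listed inequality is valid on $\rr_{\geq}\Bcal_{C_3,d}$, and (ii) the listed inequalities are sufficient, i.e.\ every $y$ satisfying all of them lies in the cone. Since $d = (p,2,r)$ and $\nn\Bcal_{C_3,d}$ is normal by Theorem~\ref{thm:C3-normal}, describing the cone is equivalent to describing the facets of the planar three-dimensional transportation polytope with one layer dimension equal to~$2$, a problem treated classically by Vlach. So at a high level the plan is to reduce to Vlach's classification and match the two descriptions.

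For validity, I would write each listed inequality as a non-negative linear combination of the tensor entries $p_{i_1 i_2 i_3}$ realizing the margins $y$. The non-negativity conditions $y^F_{i_F}\ge 0$ and the majorization conditions $y^1_i - y^{12}_{ij}\ge 0$ etc.\ are immediate, using crucially that $d_2=2$ so that, for example, $y^1_i - y^{12}_{ij} = y^{12}_{i,3-j}\ge 0$. The inclusion--exclusion inequalities $y^\emptyset - y^1_i - y^2_j + y^{12}_{ij}\ge 0$ equal $\sum_{i'\ne i,\,j'\ne j,\,k}p_{i'j'k}$ and are again non-negative. The four families indexed by $A\subseteq[p]$, $B\subseteq[r]$ require a careful case split: using $y^{23}_{1k}+y^{23}_{2k}=y^3_k$ (this is where $d_2=2$ enters decisively), I would rewrite each such inequality as a sum $\sum c_{ijk}p_{ijk}$ with coefficients $c_{ijk}\in\{0,1\}$ obtained by a combinatorial indicator of the subsets $A,B$ and of $j\in\{1,2\}$. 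Doing this for one of the four families and then using the obvious $A\leftrightarrow[p]\setminus A$, $B\leftrightarrow[r]\setminus B$, $j\leftrightarrow 3-j$ symmetries should yield the other three.

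For sufficiency, the cleanest route is duality: suppose $y$ satisfies every listed inequality but $y\notin \rr_\ge\Bcal_{C_3,d}$. By Farkas, there is a separating linear functional $\ell$ with $\ell(\Bcal_{C_3,d})\ge 0$ coordinatewise and $\ell(y)<0$. Non-negativity of $\ell$ on the columns of $\Bcal_{C_3,d}$ means that, writing $\ell$ in coordinates $(\alpha^\emptyset,\alpha^1,\alpha^2,\alpha^3,\alpha^{12},\alpha^{13},\alpha^{23})$, one has
\begin{equation*}
\alpha^\emptyset + \alpha^1_{i_1}+\alpha^2_{i_2}+\alpha^3_{i_3}+\alpha^{12}_{i_1 i_2}+\alpha^{13}_{i_1 i_3}+\alpha^{23}_{i_2 i_3}\ \ge\ 0\quad\text{for all }(i_1,i_2,i_3).
\end{equation*}
The task is then to decompose $\ell$ as a non-negative combination of the listed facet-inequalities. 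Because $d_2=2$, for each fixed $(i_1,i_3)$ only two inequalities are constrained, which reduces the decomposition to a combinatorial problem on the $p\times r$ grid that Vlach solves; the subsets $A,B$ indexing the four complex families arise as the level sets of the sign pattern of the optimal $\ell$ across the two layers $i_2=1,2$.

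The main obstacle is the final decomposition step: verifying that the four families indexed by $(A,B)$ really exhaust all non-trivial facets. I would handle this by appealing directly to the classification in Vlach's paper, which identifies these subset-indexed inequalities as the facets beyond the ``local'' constraints. Alternatively, one can check sufficiency computationally by dualizing and verifying facet-completeness for small $p,r$, and then use that the structure of the inequalities is independent of $p,r$ to extrapolate. Either way, the heavy lifting is already done in the transportation-polytope literature, and the proposition is a transcription of that result into the marginal coordinates used here.
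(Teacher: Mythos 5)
The paper does not prove Proposition~\ref{prop:3cycleineq}; it simply records the inequality description and refers to Vlach's paper on the $p\times 2\times r$ planar transportation problem (see the sentence immediately before the statement). Your proposal ultimately defers the heavy lifting---sufficiency and the classification of the $(A,B)$-indexed facets---to Vlach as well, so at bottom you are taking the same route as the paper. The extra verification scaffolding you sketch, however, has a concrete flaw: the four $(A,B)$-indexed families do \emph{not} expand as $\sum c_{ijk}p_{ijk}$ with $c_{ijk}\in\{0,1\}$. Note, for instance, that $y^{12}_{i2}+y^1_i = y^{12}_{i1}+2y^{12}_{i2}$, so once you expand in table entries these inequalities carry coefficients outside $\{0,1\}$; the non-negativity argument for the $(A,B)$ families is therefore not a routine re-expansion but needs a careful sign analysis, and is qualitatively harder than the local and inclusion--exclusion constraints. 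If you want a self-contained verification you would have to redo that case split in detail (and also establish that the list is complete), or---as the paper does---cite Vlach's classification outright and carefully match notations.
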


The projection $\phi$ onto the $[23]$ marginal amounts to setting all
of the variables in the inequalities that appear in the $[12][13]$ model to
fixed numbers and looking at the induced inequality system on the other variables.
In particular, the only indeterminates that do not appear in $[12][13]$ are
the indeterminates~$y^{23}_{jk}$. 
Using the relations $y^{23}_{2k}=y^{3}_{k}-y^{23}_{1k}$ and $y^{23}_{2r} = y^{2}_{2} - \sum_{k=1}^{r-1}y^{23}_{2k}$ we
can eliminate all $y^{23}_{2k}$ and $y^{23}_{2r}$ and restrict attention to the 
indeterminates $y^{23}_{1k}$ with $k \in \{1, \ldots, r-1\}$.  The
linear forms constraining these coordinates are all of the form
$ 
\sum_{k \in B} y^{23}_{1k}
$ 
for some $B \subseteq \{1, \ldots, r-1\}$.  Hence we have to solve the following problem:

\begin{problem}
  \label{prob:sumsineqMB}
  Fix an integer $t$.  For each $u,l \in \zz^{2^{[t]}}$ let
  \begin{equation*}
    \Sbf(u,l) :=   \Big\{   x \in \zz^{t} :     l_{A}  \leq  \sum_{i \in A} x_{i}  \leq  u_{A}  \mbox{ for all }
    A \subseteq  [t] \Big\}.
  \end{equation*}
  We wish to find inequality Markov bases for the collection $\Fcal_{t} := \{\Sbf(u,l)  : u, l   \in \zz^{2^{[t]}}  \}$.
\end{problem}
By Lemma~\ref{lem:MB-is-GB}, a Markov basis for Problem~\ref{prob:sumsineqMB} is also a Gr\"obner basis with respect
to any additive preorder.
Note that the inequality system in Problem~\ref{prob:sumsineqMB} does not depend on~$p$.
Hence, if we solve Problem~\ref{prob:sumsineqMB} for some $t$, we have a PF Markov basis of $\Gamma_{1}=[12][13]$
for all triples $(p,2,t+1)$ for all~$p$.  The resulting polytopes whose integer points
we are trying to connect are called generalized permutahedra~\cite{Postnikov2009}.

For $t = 1$, the solution is trivial, a Markov basis consists of two moves $\{\pm 1\}$.
For $t = 2$, by Example~\ref{ex:simple-ilMB}, the Markov basis
consists of six moves $\big\{ \pm(1,0), \pm(0,1), \pm(1,-1)\big\}$.
Note, however, that for the purposes of lifting, we should really consider this
as part of the $2 \times(t+1)$ matrix, whose row and column sums are equal to zero.  Hence,
we must complete these vectors to $2 \times(t+1)$ matrices with this property.
The Markov basis for $t = 1$ becomes 
$\pm\begin{smallpmatrix}  1 & -1 \\  -1 & 1  \end{smallpmatrix} $.
For $t =2$, up to the natural $\zz_{2} \times S_{3}$ symmetry, the inequality Markov basis consists
of a single move $\begin{smallpmatrix} 1 & -1 & 0  \\ -1 & 1 & 0 \end{smallpmatrix}$.

\newcommand\hlinespacing{^{\rule{0pt}{1.4ex}}_{\rule[-0.5ex]{0pt}{0ex}}}  
\begin{table}
  \centering
  \begin{tabular}{c|l}
    \hline
    \rule{0pt}{2.4ex} $r = t+1$ &  \mbox{ new moves}  \\
    \hline
    2  &  \scalebox{0.9}{$\begin{pmatrix}  1 & -1 \\  -1 & 1  \end{pmatrix}\hlinespacing$}  \\
    \hline
    3  & $ \emptyset\hlinespacing$  \\  \hline
    4  & \scalebox{0.9}{$ \begin{pmatrix} 1 & 1 & -1 & -1 \\ -1 & -1 & 1 & 1 \end{pmatrix}\hlinespacing$} \\
    \hline
    5 &  \scalebox{0.9}{$ \begin{pmatrix} 2 & 1 & -1 & -1 & -1 \\ -2 & -1 & 1 & 1 & 1 \end{pmatrix}\hlinespacing$} \\
    \hline
    6 &  \scalebox{0.9}{$ \begin{pmatrix} 1 & 1 & 1 & -1 & -1 & -1 \\ -1 & -1 & -1 & 1 & 1 & 1 \end{pmatrix}$}  
         \scalebox{0.9}{$ \begin{pmatrix} 2 & 1 & 1 & -2 & -1 & -1 \\ -2 & -1 & -1 & 2 & 1 & 1 \end{pmatrix}\hlinespacing$} \\
      &  \scalebox{0.9}{$ \begin{pmatrix} 2 & 2 & -1 & -1 & -1 & -1 \\ -2 & -2 & 1 & 1 & 1 & 1 \end{pmatrix}$}
         \scalebox{0.9}{$ \begin{pmatrix} 3 & 1 & -1 & -1 & -1 & -1 \\ -3 & -1 & 1 & 1 & 1 & 1 \end{pmatrix}$}  \\
      &  \scalebox{0.9}{$ \begin{pmatrix} 3 & 1 & 1 & -2 & -2 & -1 \\ -3 & -1 & -1 & 2 & 2 & 1 \end{pmatrix}$}
         \scalebox{0.9}{$ \begin{pmatrix} 3 & 2 & -2 & -1 & -1 & -1 \\ -3 & -2 & 2 & 1 & 1 & 1 \end{pmatrix}$}
    \\
    \hline
  \end{tabular}

  \caption[Markov bases for~$\Fcal_{t}$ up to symmetry.]{The PF Markov bases for various values of~$r$ up to symmetry.
    Each Markov basis contains the moves from the previous rows padded with columns of zeros.
    For example, the Markov basis of $r=3$ consists of
    $\begin{smallpmatrix}1&-1&0\\-1&1&0\end{smallpmatrix}$, $\begin{smallpmatrix}1&0&-1\\-1&0&1\end{smallpmatrix}$ and $\begin{smallpmatrix}0&1&-1\\0&-1&1\end{smallpmatrix}$.
  }
\label{tab:St}
\end{table}

We computed the Markov bases for various values of $t$ using~\verb|4ti2|.  Table~\ref{tab:St} summarizes our
results, classifying the elements in the Markov basis up to symmetry.  In the table, the moves are already converted into the form of $2\times(t+1)$-tables in which we need them later as a PF Markov basis.
A Markov basis of~$\Fcal_{t}$ can be obtained by dropping the second row and the last column.

We do not know a general solution to Problem \ref{prob:sumsineqMB},
and we think it will be an interesting challenge to try to 
find a general form for the inequality Markov basis in this case.


\subsubsection{Lifting the IPF Gr\"obner basis}

Next, supposing that we have solved Problem~\ref{prob:sumsineqMB} for $t=r-1$, we explain how to lift 
along the map~$\phi$.
\begin{proposition}
  \label{prop:lifting-3string}
  Let $b$ be a $2 \times r$-matrix from a PF Gr\"obner basis
  of $\xi(\Fcal(\Bcal_{C_{4},(p,2,r,q)}))$. 
  There is a lifting of $b$ along $\phi$ to $(p,2,r)$-arrays in which the combinatorial types
  are in bijections with directed acyclic multigraphs with vertex set $[r]$ such that
  for each vertex $i \in [r]$,  $\operatorname{outdeg}(i) - \operatorname{indeg}(i) =  b'_{i}$.  
\end{proposition}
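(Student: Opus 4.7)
Proof proposal. Set $b'_k:=b_{1k}$. Since $b$ lies in $\phi(\Zker\Bcal_{[12][13],(p,2,r)})$, one checks $b_{2k}=-b'_k$ and $\sum_k b'_k=0$. Writing $n_{ik}:=m_{i1k}$, an array $m\in\Zker\Bcal_{[12][13],(p,2,r)}$ with $\phi(m)=b$ is determined by a $p\times r$ integer matrix $n$ through $m_{i2k}=-n_{ik}$, subject to the constraints that every row of $n$ sums to zero (from vanishing $[12]$- and $[13]$-marginals) and $\sum_i n_{ik}=b'_k$ (from $\phi(m)=b$).

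My plan is to construct $\Mcal_b$ from the sparsest such $n$'s: those in which every nonzero row is elementary, of the form $e_k-e_\ell$ with $k\neq\ell$. Each such row contributes one directed edge $\ell\to k$, and aggregating over rows yields a directed multigraph $G_n$ on $[r]$ for which $\operatorname{outdeg}(k)-\operatorname{indeg}(k)=b'_k$ follows immediately from the column-sum condition. The combinatorial type of $n$ (its class under permutation of the $[p]$-axis) is then in canonical bijection with $G_n$, because the row-to-edge correspondence is invariant under row permutation and any multigraph with at most $p$ edges is realised by placing each of its edges in a distinct row. Define $\Mcal_b$ to consist of those combinatorial types whose $G_n$ is acyclic.

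That $\Mcal_b$ is a $\phi$-lift of $\{b\}$ in the sense of Definition~\ref{def:lift} follows from Proposition~\ref{prop:lifting-procedure-GB} combined with Lemma~\ref{lem:conformal-redundancy}. Any candidate $n$ admits a row-by-row conformal decomposition into elementary vectors, so one may restrict attention to $n$ whose rows are themselves elementary. If the resulting multigraph contains a directed cycle $k_1\to k_2\to\cdots\to k_s\to k_1$, the $s$ elementary rows realising this cycle sum to an element of $\Zker\phi\cap\Zker\Bcal_{[12][13]}$, which is supplied by the kernel Gr\"obner basis furnished by Theorem~\ref{thm:GB-lifting}; removing these rows yields a strictly smaller $n'$ with the same $b'$-marginal. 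Hence every cyclic $n$ is conformally redundant and is discarded by Lemma~\ref{lem:conformal-redundancy}, leaving precisely the acyclic representatives.

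The main obstacle lies in justifying this redundancy argument carefully: one must verify that the conformal/Graver-type reduction inside the $(\Lcal_b,D_b)$-Gr\"obner basis corresponds exactly to cancelling directed cycles of $G_n$, and that no further conformal splitting of $n$ is possible once $G_n$ is acyclic. The latter reduces to the combinatorial fact that in a DAM any non-empty edge subset whose endpoints have zero net degree at every vertex must contain a directed cycle, and hence cannot exist; this rules out any proper conformal splitting $n=n'+n''$ with $n''\in\Zker\phi$ and $n''\neq 0$ among acyclic representatives, which is exactly what is needed to establish the claimed bijection.
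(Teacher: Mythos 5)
Your proposal takes essentially the same approach as the paper: identify a lift of $b$ with a directed multigraph on $[r]$ (via the $p\times r$ slice $n$), and discard any multigraph containing a directed cycle because the cycle splits off conformally as an element of $\Zker\phi\cap\Zker\Bcal_{[12][13]}$, so Lemma~\ref{lem:conformal-redundancy} applies. Your extra care about elementary rows and about why acyclic representatives admit no further conformal $\phi$-kernel splitting is a reasonable unpacking of what the paper's terse proof leaves implicit.
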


The bijection in the proposition is as follows:
associate to such a multigraph $G$,  
and a collection of elements $a_{ij} \in [p]$, one for each edge $i \to j \in E(G)$,
the move
$$
\sum_{i \to j \in E(G) } (e_{a_{ij},1,i} + e_{a_{ij},2,j}  -  e_{a_{ij},1,j} - e_{a_{ij},2,i}).
$$

\begin{proof}
If we remove the restriction that $G$ does not contain directed cycles,
the set of all such moves produced contains all vectors in $\ker_{\zz} \Bcal_{\Gamma_{1}, (p,2,r)}$ 
that project to~$b$.  If a graph $G=(V,E)$ has a directed cycle~$C\subseteq E$, then
each corresponding move can be conformally decomposed into a lift of $b$ that corresponds to the directed multigraph
$(V,E\setminus C)$ and an element of $\ker_{\zz} \Bcal_{C_{3}, (p,2,r)}$ corresponding to the multigraph~$(V,C)$. 
By Lemma~\ref{lem:conformal-redundancy}, moves that possess such a conformal decomposition are redundant.
\end{proof}

\begin{example}
  Let $r=3$.  Up to symmetry the PF Gr\"obner basis contains a single move
  $b=\begin{smallpmatrix} 1 & -1 & 0 \\ -1 & 1 & 0 \end{smallpmatrix}$ for which $b' = (1,-1,0)$.  There are two acyclic
  directed multigraphs that satisfy the prescribed indegree and outdegree conditions, the graph with a single edge $1
  \to 2$ and the graph with two edges $1 \to 3$ and $3 \to 2$.  The corresponding lifts in this case are, in tableau
  notation,
  \begin{equation*}
    \scalebox{0.9}{$
    \begin{bmatrix}
      a 1 c_{1}  \\
      a 2 c_{2}
    \end{bmatrix}$} -
    \scalebox{0.9}{$
    \begin{bmatrix}
      a 1 c_{2}  \\
      a 2 c_{1}
    \end{bmatrix}$}
    \quad \mbox{  and  }  \quad
    \scalebox{0.9}{$
    \begin{bmatrix}
      a_{1} 1 c_1  \\
      a_{1} 2 c_3  \\
      a_{2} 1 c_3  \\
      a_{2} 2 c_2
    \end{bmatrix}$} -
    \scalebox{0.9}{$
    \begin{bmatrix}
      a_{1} 1 c_3  \\
      a_{1} 2 c_1  \\
      a_{2} 1 c_2  \\
      a_{2} 2 c_3
    \end{bmatrix}$}, \quad \text{for $a,a_{1},a_{2}\in[p]$, $c_{1}, c_{2}, c_{3} \in [3]$.}\tag*{\qed}
  \end{equation*}
\end{example}

Finally, we need to glue the lifts coming
from $\Gamma_{1}=[12][13]$ and $\Gamma_{2}=[24][34]$.  Lemma~\ref{lem:gluing-lifts} tells us to 
calculate $\Glues(m,m')$ for all pairs of lifts $m,m'$ of the same
element $g$ in the PF Gr\"obner basis.  However, it can happen that
some such glues are not actually needed in the resulting Gr\"obner basis and can be eliminated, as the following example shows.

\begin{example}\label{ex:badglue}
For $r=3$ consider the glued move
\begin{equation*}
  \scalebox{0.9}{$
  \begin{bmatrix}
    a_{1} 1 1 e_{1}  \\
    a_{1} 2 3 e_{1} \\ 
    a_{2} 1 3 e_{2} \\
    a_{2} 2 2 e_{2} 
  \end{bmatrix}$} -
  \scalebox{0.9}{$
  \begin{bmatrix}
    a_{1} 1 3 e_{1}  \\
    a_{1} 2 1 e_{1} \\
    a_{2} 1 2 e_{2} \\
    a_{2} 2 3 e_{2} 
  \end{bmatrix}$}
  \in
  \Glues\Bigg\{
  \scalebox{0.9}{$
  \begin{bmatrix}
    a_{1} 1 1  \\
    a_{1} 2 3  \\
    a_{2} 1 3  \\
    a_{2} 2 2  
  \end{bmatrix}$} -
  \scalebox{0.9}{$
  \begin{bmatrix}
    a_{1} 1 3  \\
    a_{1} 2 1  \\
    a_{2} 1 2  \\
    a_{2} 2 3  
  \end{bmatrix}$},
  \quad
  \scalebox{0.9}{$
  \begin{bmatrix}
    1 1 e_{1}  \\
    2 3 e_{1} \\ 
    1 3 e_{2} \\
    2 2 e_{2} 
  \end{bmatrix}$} -
  \scalebox{0.9}{$
  \begin{bmatrix}
    1 3 e_{1}  \\
    2 1 e_{1} \\
    1 2 e_{2} \\
    2 3 e_{2} 
  \end{bmatrix}$}
  \Bigg\}.
\end{equation*}
This move is the conformal decomposition of two degree $2$ moves (which are themselves glue moves)
namely
$$
\scalebox{0.9}{$
\begin{bmatrix}
a_{1} 1 1 e_{1}  \\
a_{1} 2 3 e_{1} \\ 
\end{bmatrix}$} -
\scalebox{0.9}{$
\begin{bmatrix}
a_{1} 1 3 e_{1}  \\
a_{1} 2 1 e_{1} \\
\end{bmatrix}$} \quad \mbox{ and } \quad
\scalebox{0.9}{$
\begin{bmatrix}
a_{2} 1 3 e_{2} \\
a_{2} 2 2 e_{2} 
\end{bmatrix}$} -
\scalebox{0.9}{$
\begin{bmatrix}
a_{2} 1 2 e_{2} \\
a_{2} 2 3 e_{2} 
\end{bmatrix}$}.
$$
By Lemma~\ref{lem:conformal-redundancy}, the move does not appear in a minimal Gr\"obner basis.  \qed
\end{example}

Applying the glue construction to all different pairs and throwing out the bad
combination in Example \ref{ex:badglue} produces the following general 
result.

\begin{theorem}
  \label{thm:MB-C4}
  Let $\Gamma = C_{4} := [12][13][24][34]$ and $d = (p,2,3,q)$, and let $\succeq_{\times}$ be as in
  Section~\ref{sec:TFP}.  A $\succeq_{\times}$-Gr\"obner basis of $\Bcal_{C_{4},d}$ consists of the moves from Example
  \ref{ex:codim0p23q} (from the associated codimension zero product) together with the necessary glue moves:
$$
\scalebox{0.9}{$
\begin{bmatrix}
a 1 c_{1} e \\
a 2 c_{2} e 
\end{bmatrix}$}-
\scalebox{0.9}{$
\begin{bmatrix}
a 1 c_{2} e \\
a 2 c_{1} e 
\end{bmatrix}$},
\scalebox{0.9}{$
\begin{bmatrix}
a_{1} 1  c_1 e_{1} \\
a_{1} 2  c_2 e_{2} \\
a_{2} 1  c_2 e_{3} \\
a_{2} 2  c_3 e_{1} \\
\end{bmatrix}$}-
\scalebox{0.9}{$
\begin{bmatrix}
a_{1} 1  c_2 e_{3} \\
a_{1} 2  c_1 e_{1} \\
a_{2} 1  c_3 e_{1} \\
a_{2} 2  c_2 e_{2} \\
\end{bmatrix}$},
\scalebox{0.9}{$
\begin{bmatrix}
a_{1} 1  c_1 e_{1} \\
a_{2} 2  c_2 e_{1} \\
a_{3} 1  c_2 e_{2} \\
a_{1} 2  c_3 e_{2} \\
\end{bmatrix}$}-
\scalebox{0.9}{$
\begin{bmatrix}
a_{3} 1  c_2 e_{1} \\
a_{1} 2  c_1 e_{1} \\
a_{1} 1  c_3 e_{2} \\
a_{2} 2  c_2 e_{2} \\
\end{bmatrix}$},
$$
where $a, a_{1}, a_{2}, a_{3} \in [p]$,  $c_{1}, c_{2}, c_{3} \in \{1,2,3\}$, and $e, e_{1}, e_{2}, e_{3} \in [q]$. 
\end{theorem}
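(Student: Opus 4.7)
The plan is to apply the lifting framework of Theorem~\ref{thm:GB-lifting} to the toric fiber product decomposition of $\Bcal_{C_{4},d}$ given by Proposition~\ref{prop:tfp-hierarchical} with $V_{1}=\{1,2,3\}$, $V_{2}=\{2,3,4\}$ and separator $S=\{2,3\}$, lifting along the marginalization map $\xi:\Zb^{D_{V}}\to\Zb^{D_{S}}$ that records the $[23]$-margin. Compatibility of the chosen additive preorders with $\xi$, $\phi$ and $\phi'$ is assumed as in Section~\ref{sec:TFP}, and a compatible $\succeq_{\times}$ is constructed via the recipe given there.

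First I would supply the kernel Gr\"obner basis: by Lemma~\ref{lem:cod-zero-connects-fibers} this is a Gr\"obner basis of the associated codimension-zero TFP, which in the hierarchical setting is $\Bcal_{\tilde C_{4},d}$ by Proposition~\ref{prop:tfp-hierarchical}(2). Since the two triangles gluing $\tilde C_{4}$ are of the form $C_{3}$ with $d_{V_{i}} = (p,2,3)$ or $(3,2,q)$, Theorem~\ref{thm:mb2xpxq} gives explicit Graver/Gr\"obner bases for each factor, and Theorem~\ref{thm:codimzero} assembles them (together with the $\Quads$ moves) into the Gr\"obner basis listed in Example~\ref{ex:codim0p23q}; this accounts for the first block of moves in the statement.

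Next I would exhibit a projected-fiber-intersection Gr\"obner basis. By Theorem~\ref{thm:C3-normal}, $\Nb\Bcal_{C_{3},(p,2,3)}$ is normal, and similarly for the other factor, so by Lemma~\ref{lem:normal-noholes} and Remark~\ref{rem:intersection-in-TF-powers} a PFI Gr\"obner basis coincides with an inequality Gr\"obner basis for the $(23)$-margin description from Proposition~\ref{prop:3cycleineq}. After eliminating the $y^{23}_{2k}$ via the marginal relations as in the discussion of Problem~\ref{prob:sumsineqMB}, the problem reduces to the $t=r-1=2$ case tabulated in Table~\ref{tab:St}: up to the $\zz_{2}\times S_{3}$ symmetry there is a single move $\begin{smallpmatrix}1&-1&0\\-1&1&0\end{smallpmatrix}$, matching the symmetry orbit used in the statement. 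Lemma~\ref{lem:MB-is-GB} ensures that this is a Gr\"obner basis for any compatible additive preorder, so Lemma~\ref{lem:computing-L,D-GBs} promotes it to the required PFI Gr\"obner basis $\Gcal$.

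I would then lift each PF move along $\phi$ and $\phi'$ using Proposition~\ref{prop:lifting-3string}: for $r=3$ the acyclic directed multigraphs on $[3]$ with prescribed out-minus-in degree $(1,-1,0)$ are precisely two, the single edge $1\to 2$ and the length-two path $1\to 3\to 2$, producing the two tableau moves exhibited after Proposition~\ref{prop:lifting-3string}. The symmetric computation applies on the $\phi'$ side. By Lemma~\ref{lem:gluing-lifts} the set $\Glues(\Mcal,\Mcal')$ is then a $\xi$-lift of $\Gcal$, and combining it with the kernel Gr\"obner basis via Theorem~\ref{thm:GB-lifting} yields a $\succeq_{\times}$-Gr\"obner basis of $\Bcal_{C_{4},d}$.

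The main technical obstacle is the final pruning step: the raw gluing produces several combinations of the two $\phi$-lifts with the two $\phi'$-lifts, and I need to show that the only surviving ones (up to conformal redundancy) are the three families listed in the theorem statement. Concretely, for each ordered pair of lifts I would enumerate the glues through the procedure of Section~\ref{sec:lifting-TFP} and check which are conformal sums of smaller moves already present in the basis (either among the kept glues, among the codimension-zero moves, or among degree-two glues). The prototype calculation is Example~\ref{ex:badglue}, where the glue of the two length-two path lifts decomposes conformally into two degree-two quadratic glues; by Lemma~\ref{lem:conformal-redundancy} such moves can be dropped. Carrying out this case analysis for all pairings and using the $\zz_{2}\times S_{r}$ symmetry to reduce the work leaves exactly the three orbits of moves (the quadratic glue $2\times 2\times 1\times 1$, and two $2\times 2\times 3\times 2$-type glues) displayed in the statement, completing the proof.
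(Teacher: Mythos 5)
Your proposal is correct and follows essentially the same route the paper takes: the paper does not give a formal proof of this theorem but instead derives it from the discussion in Section~\ref{sec:4-cycle} (codimension-zero Gr\"obner basis from Theorems~\ref{thm:mb2xpxq} and~\ref{thm:codimzero} as in Example~\ref{ex:codim0p23q}, PFI basis from Proposition~\ref{prop:3cycleineq} / Problem~\ref{prob:sumsineqMB} / Table~\ref{tab:St}, lifts via Proposition~\ref{prop:lifting-3string}, and gluing with the conformal-redundancy pruning prototyped in Example~\ref{ex:badglue}), exactly as you reconstruct. The only cosmetic point is that your shorthand labels for the three orbits of glue moves are a bit opaque, but the enumeration and pruning argument you describe is the right one and matches the paper's single-sentence justification.
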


This example provides us with an explicit instance of the finiteness
stabilization of the independent set theorem of \cite{HillarSullivant2012}.
In particular, because of the moves coming from the codimension zero product, we
see that the Markov basis stabilizes up to symmetry when $p = q = 6$.

The fact that the Gr\"obner basis in Theorem~\ref{thm:MB-C4} is square-free implies that the semigroup
$\Nb\Bcal_{C_{4}}$ is normal for $d=(p,2,3,q)$, see~\cite[Proposition~13.15]{Sturmfels1996:GBCP}.  More generally,
iterating the argument (see Section~\ref{sec:degree-bounds} below) shows that the semigroup $\Nb\Bcal_{K_{2,N}}$ of
the complete bipartite graph $K_{2,N}$ is normal for $d_{1}=2$, $d_{2}=3$.  As mentioned before, in general, the toric
fiber product does not preserve normality~\cite{KahleRauh2014:TFPs_and_SPs}.  


\subsection[Gluing K4 minus an edge]{Example: $K_{4}$ minus an edge}
\label{sec:K4-e}

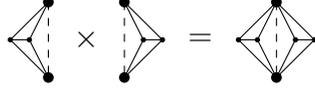
\begin{figure}
  \centering
  \begin{tikzpicture}[scale=0.5]
    \fill (1,1) circle (4pt);
    \fill (0,0) circle (2pt);
    \fill (0.5,0) circle (2pt);
    \fill (1,-1) circle (4pt);
    \draw (0,0) -- (1,1) -- (0.5,0) -- (1,-1) -- (0,0) -- (0.5,0);
    \draw[dashed] (1,1) -- (1,-1);
    \node at (2,0) {$\times$};
    \fill (3,1) circle (4pt);
    \fill (4,0) circle (2pt);
    \fill (3.5,0) circle (2pt);
    \fill (3,-1) circle (4pt);
    \draw (3.5,0) -- (3,1) -- (4,0) -- (3,-1) -- (3.5,0) -- (4,0);
    \draw[dashed] (3,1) -- (3,-1);
    \node at (5,0) {$=$};
    \fill (7,1) circle (4pt);
    \fill (6,0) circle (2pt);
    \fill (6.5,0) circle (2pt);
    \fill (7,-1) circle (4pt);
    \fill (8,0) circle (2pt);
    \fill (7.5,0) circle (2pt);
    \draw (6,0) -- (7,1) -- (6.5,0) -- (7,-1) -- (6,0) -- (6.5,0);
    \draw (7.5,0) -- (7,1) -- (8,0) -- (7,-1) -- (7.5,0) -- (8,0);
    \draw[dashed] (7,1) -- (7,-1);
  \end{tikzpicture}
  \caption{Gluing two copies of~$\tilde C_{4}$.  The dashed edges are the additional edges of the codimension zero product.
  }
  \label{fig:K4-e}
\end{figure}
In this section, we consider the problem of constructing a Markov basis for the complexes obtained from $\tilde C_{4} :=
[12][13][23][24][34]$ with $d = (2,2,2,2)$, by gluing multiple copies of $\tilde C_{4}$ together along the ``missing
edge''~$[14]$; see~Figure~\ref{fig:K4-e}.  This example serves two purposes: First, it demonstrates how our results can
be adjusted in the presence of holes in the projected fibers, if the structure of the holes is nice enough.  A more
complex example in which the projected fibers have holes (the complete bipartite graph $K_{3,N}$) is discussed at the
end of Section~\ref{sec:degree-bounds}.  Second, it illustrates that our procedure does not in general yield a minimal
Markov basis.  The main focus of this section lies on understanding the projected fibers.  Therefore, we do not write
out the final Markov basis explicitly, but we give a Markov basis of $\tilde C_{4}$ that satisfies the compatible
projection property in Proposition~\ref{prop:hatk4lift}.

Gluing binary hierarchical models along a missing edge is a codimension one toric
fiber product; see~\cite{EngstromKahleSullivant13:TFP-II} for further examples.
If the associated codimension zero semigroup were normal,
the Markov basis of $\ker_{\zz} \Bcal_{\tilde C_{4},d}$ would be slow-varying (Lemma~\ref{lem:slow-varying-lift})
and we could directly apply the results of \cite{EngstromKahleSullivant13:TFP-II}
to construct a Markov basis (Lemma~\ref{lem:slow-varying-TFP}).

The associated codimension-zero complex of $\tilde C_{4}$ 
is equal to the complete graph $K_{4}:=[12][13][14][23][24][34]$ (Proposition~\ref{prop:tfp-hierarchical}).
For $d=(2,2,2,2)$, a minimal Markov basis consisting of 60 moves can easily be computed using~\verb|4ti2|.
From it, a kernel Markov basis can be obtained using Theorem~\ref{thm:codimzero}.
We do not want to discuss the kernel Markov basis in more detail, but we focus on the PFI Markov basis and the lifting
process instead.

Next, we compute a PFI Markov basis.  The result is the following:
\begin{lemma}
  \label{lem:hatk4-PFI}
  A PFI Markov basis for gluing binary copies of $\tilde C_{4}$ along the missing edge is given by (in tableau notation
  and in tensor notation, respectively)
  \begin{equation*}
    \Gcal = \left\{
      \scalebox{0.9}{$
      \begin{bmatrix}
        00  \\ 11
      \end{bmatrix}$}
      -
      \scalebox{0.9}{$
      \begin{bmatrix}
        01  \\ 10
      \end{bmatrix}$},
      \scalebox{0.9}{$\begin{bmatrix}
        00 \\ 00 \\ 11 \\ 11
      \end{bmatrix}$}
      -
      \scalebox{0.9}{$\begin{bmatrix}
        01 \\ 01 \\ 10 \\ 10
      \end{bmatrix}$}
    \right\} \quad = \quad 
    \left\{
      \scalebox{0.9}{$\begin{pmatrix}
        +1 & -1 \\ -1 & +1
      \end{pmatrix}$}
      ,
      \scalebox{0.9}{$\begin{pmatrix}
        +2 & -2 \\ -2 & +2
      \end{pmatrix}$}
    \right\}.
  \end{equation*}
\end{lemma}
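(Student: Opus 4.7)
\textbf{Proof plan for Lemma \ref{lem:hatk4-PFI}.} The plan is to reduce the PFI problem to a one-dimensional connectivity question and then exhibit a finite case analysis showing that shifts by $\pm 1$ and $\pm 2$ in a single coordinate suffice. First I would exploit the codimension. Since $\tilde C_4$ contains the singletons $\{1\}$ and $\{4\}$ among its faces, the $[1]$- and $[4]$-marginals are determined by the $\tilde C_4$-sufficient statistics, so for any fiber $\Fbf(\Bcal_{\tilde C_4,(2,2,2,2)}, b)$ its image $\phi(\Fbf(\cdot, b))\subseteq \Zb^{2\times 2}$ lies in a one-dimensional affine sublattice of $2\times 2$-matrices with fixed row and column sums. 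Parameterizing this sublattice by the single entry $y := y_{00}^{14}$, each projected fiber becomes a subset $S_b\subseteq\Zb$. Both displayed moves $g_1$ and $g_2$ lie in this sublattice and shift $y$ by $+1$ and $+2$, respectively.

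Second, I would obtain a rational description of the projected fibers. Computing (e.g.\ via \verb|Normaliz|) the facets of the cone $\Rb_{\ge}\Bcal_{\tilde C_4,(2,2,2,2)}$ and projecting out the coordinates orthogonal to the $[14]$-margin gives explicit affine bounds $\ell(b)\le y \le u(b)$ that depend linearly on the remaining sufficient statistics. By Remark~\ref{rem:intersection-in-TF-powers}, the intersection $\phi(\Fbf(\cdot,b))\cap\phi(\Fbf(\cdot,b'))$ inherits the same inequality description with $\ell, u$ replaced by $\max\{\ell(b),\ell(b')\}$ and $\min\{u(b),u(b')\}$, so the rational relaxation of every PFI fiber is an interval in $\Zb$ along the $y$-axis.

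Third, and this is the main obstacle, I would identify the holes of $\Nb\Bcal^{\phi}_{\tilde C_4,(2,2,2,2)}$, which prevent every integer in $[\ell,u]$ from being attained. A finite enumeration (the set of hole classes is finite and can be computed in \verb|Normaliz|) should show that each hole stratum is governed by a single parity condition: on that stratum only one of the two residues of $y$ modulo~$2$ is realized. Consequently every non-empty projected fiber, and hence every PFI fiber, is an arithmetic progression in $y$ with common difference $1$ or $2$. The ``difficult'' task here is to verify that no hole of higher arithmetic complexity (step $\ge 3$ or non-arithmetic gaps) appears; I expect this to follow from a direct enumeration over the finitely many hole classes, exploiting the $S_2^{\{1,4\}}\times S_2^{\{2\}}\times S_2^{\{3\}}$ symmetry to reduce the casework.

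Finally, given the arithmetic-progression description, connectivity by $\{g_1,g_2\}$ is immediate: on a step-$1$ fiber the move $g_1$ alone traverses the progression, and on a step-$2$ fiber $g_2$ traverses it. Both moves are needed (neither is redundant) because fibers of each type occur. This gives the claimed PFI Markov basis, and in the write-up I would close by remarking that, since $g_2 = 2 g_1$ in $\Zb^{2\times 2}$, this is a concrete instance in which the normality assumption in Lemma~\ref{lem:normal-noholes} fails and must be replaced by the hole analysis of Step~3.
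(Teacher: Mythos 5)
Your plan follows essentially the same route the paper takes: reduce to the one-dimensional sublattice (codimension-one setup), derive an interval description of the hole-free projected fibers, analyze the holes by computation, and conclude that $g$ and $2g$ together suffice. There are two places where the plan is looser than it needs to be and one genuine gap.

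First, your Step~3 anticipates ``hole classes'' governed by parity conditions on several strata and asks whether gaps of step~$\ge 3$ might occur. The paper's computation (following \cite{HemmeckeTakemureYoshida09:Computing_holes_in_semigroups} and \verb|Normaliz|) shows that $\nn\Bcal_{K_4,d}$ has a \emph{unique} hole, namely the all-ones margin $\mathbf{1}$, so there is exactly one fiber $\Fbf(\Bcal_{\tilde C_4,d},\mathbf{1})$ whose projection is not an interval; that projection consists of exactly two points at distance $2g$. Your ``arithmetic progression with common difference $1$ or $2$'' framing is consistent with this, but you should commit to checking (and would find) that the hole is unique rather than leaving the casework open.

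Second, and this is the genuine gap: a PFI Markov basis must connect \emph{intersections} of projected fibers, and in Step~2 you lean on Remark~\ref{rem:intersection-in-TF-powers} to argue that the intersections inherit the same interval-plus-parity structure. But that remark is stated under a normality hypothesis precisely so that the projected fibers have a clean inequality description; here that hypothesis fails, which you yourself flag at the end. Once holes appear, it is not automatic that the intersection of two projected fibers of the types you describe is again a single arithmetic progression connectable by one of your two moves (e.g.\ intersecting a step-$2$ progression on one residue with a step-$2$ progression on the other residue is empty, but a priori more complicated interactions could occur if the hole structure were less simple). You need the explicit argument the paper gives: if the intersection involves only hole-free fibers, it is an interval connected by $g$; otherwise it is a subset of the two-element set $\phi(\Fbf(\Bcal_{\tilde C_4,d},\mathbf{1}))$ and is connected by $2g$. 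This step depends crucially on the uniqueness of the hole, so it has to come after, not before, the hole computation.
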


To prove this result, we need to study~$\nn \Bcal_{K_{4},d}$.  This semigroup is not normal, and so we cannot compute a
PF Markov basis directly as an inequality Markov basis.  We will prove Lemma~\ref{lem:hatk4-PFI} after describing the
single hole.

\begin{proposition}
  With $d = (2,2,2,2)$, the semigroup $\nn  \Bcal_{K_{4},d}$
  has a single hole ${\bf 1}$, which has a one in each component (that is, all pair margins are equal to one).
\end{proposition}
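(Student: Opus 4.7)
The plan is to verify two things: that $\mathbf{1}$ is a hole (i.e.\ $\mathbf{1} \in N\Bcal_{K_4,d} \setminus \nn\Bcal_{K_4,d}$), and that no other hole exists.

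For cone membership of $\mathbf{1}$, the identity $\sum_{i \in \{0,1\}^4} b_i = 4\mathbf{1}$ suffices, since for each pair $F$ and each value $j_F \in \{0,1\}^2$ exactly four of the sixteen columns restrict to $j_F$ on $F$. Lattice membership is established by exhibiting an explicit integer combination, for instance
$$\mathbf{1} \;=\; b_{0000} + b_{0111} + b_{1011} + b_{1101} + b_{1110} - b_{1111},$$
which one verifies pair by pair. To show $\mathbf{1} \notin \nn\Bcal_{K_4,d}$, suppose that $\mathbf{1} = \sum_i c_i b_i$ with $c_i \in \nn$. Summing any pair margin gives $\sum_i c_i = 4$, so the coefficients $c_i$ encode a multiset of four binary strings in $\{0,1\}^4$ whose pair margins all equal the $2\times 2$ all-ones matrix. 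Equivalently, this is a strength-two orthogonal array $\mathrm{OA}(4,4,2,2)$, which is classically known not to exist. A direct argument works as follows: the single margins must each be $(2,2)$, so $\sum_i c_i w_i = 8$ where $w_i$ is the Hamming weight of the $i$-th string; counting pairs of coordinates that differ within each string yields $\sum_i c_i w_i(4-w_i) = 12$; combined with $\sum_i c_i = 4$ these identities force the multiset of weights to be $\{1,1,3,3\}$; a short case analysis on two weight-one and two weight-three strings then contradicts one of the six pair-margin conditions.

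The harder part is showing that $\mathbf{1}$ is the unique hole. The natural plan is induction on the sample size $n(v)$, defined as the common marginal total $\sum_{j_F} v^F_{j_F}$ taken over any facet $F \in K_4$. At small sample sizes one enumerates: at $n=1$ the cone's lattice points are exactly the sixteen columns, at $n = 2,3$ every lattice point in the cone decomposes as a sum of columns, and at $n = 4$ one needs to verify that $\mathbf{1}$ is the sole obstruction, which is the most delicate case. For $n(v) \geq 5$ the aim is to find a column $b_i$ with $v - b_i$ still in $\rr_{\geq 0}\Bcal_{K_4,d}$; using the explicit facet description of the binary $K_4$ marginal cone, an averaging argument against those facet inequalities produces such a $b_i$, after which induction on $n(v)$ finishes the proof. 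The main obstacle is executing the careful enumeration at $n = 4$, which is most cleanly carried out computationally using \verb|Normaliz|; its output exhibits the Hilbert basis of $\rr_{\geq 0}\Bcal_{K_4,d} \cap \zz\Bcal_{K_4,d}$ and flags $\mathbf{1}$ as the unique element not already lying in $\nn\Bcal_{K_4,d}$.
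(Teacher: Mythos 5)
Your verification that $\mathbf{1}$ is a hole is correct and actually tidier than the paper, which simply appeals to the \texttt{Normaliz} computation; the orthogonal array non-existence argument (an $\mathrm{OA}(4,4,2,2)$ does not exist) is a clean, self-contained way to see $\mathbf{1}\notin\nn\Bcal_{K_4,d}$.

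The uniqueness argument, however, has a genuine gap. You propose, for $n(v)\ge 5$, to find a column $b_i$ with $v-b_i$ still in the cone and then invoke induction on $n(v)$. But the inductive hypothesis only tells you that $v-b_i$ is either in $\nn\Bcal_{K_4,d}$ \emph{or is the hole} $\mathbf{1}$; in the latter case you have shown $v=\mathbf{1}+b_i$, and you cannot conclude $v\in\nn\Bcal_{K_4,d}$ without a further argument. This is exactly where the paper's proof does its real work: it observes (following the structure of the Hemmecke--Takemura--Yoshida algorithm) that $\mathbf{1}$ is the unique \emph{fundamental} hole, so every other putative hole has the form $\mathbf{1}+f$ with $0\neq f\in\nn\Bcal_{K_4,d}$, and it therefore suffices to check $\mathbf{1}+b_i\in\nn\Bcal_{K_4,d}$ for each generator $b_i$ (by symmetry, for one); the paper then exhibits the explicit table
\begin{equation*}
  v = e_{0001}+e_{0010}+e_{0100}+e_{1000}+e_{1111}
\end{equation*}
with $\Bcal_{K_4,d}\,v = \mathbf{1}+b_{0000}$, which closes the argument. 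Without this check, your induction does not terminate: if $\mathbf{1}+b_i$ were itself a hole, stripping $b_i$ would never leave the hole set, and you could in principle have an infinite chain $\mathbf{1},\ \mathbf{1}+b_i,\ \mathbf{1}+b_i+b_j,\ \dots$ of holes. Separately, the claimed ``averaging argument against the facet inequalities'' that produces a strippable column for $n(v)\ge 5$ is not carried out, and given the complexity of the facet description of the binary $K_4$ marginal cone this is not a step one can wave past; but even granting it, the $v-b_i=\mathbf{1}$ case must be handled as above.
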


\begin{proof}
We follow the algorithm of~\cite{HemmeckeTakemureYoshida09:Computing_holes_in_semigroups}.
Direct computation with \verb|Normaliz| yields
that there is exactly one Hilbert basis element of the normalization of $\nn  \Bcal_{K_{4},d}$
which is not in $\nn  \Bcal_{K_{4},d}$, namely the vector~${\bf 1}$.
That means that ${\bf 1}$ is the unique
fundamental hole of~$\nn  \Bcal_{K_{4},d}$.  Any other hole of $\nn  \Bcal_{K_{4},d}$
must be of the form ${\bf 1} +f$ for some nonzero $f \in \nn  \Bcal_{K_{4},d}$.  Hence,
it suffices to check whether ${\bf 1} +f \in \nn \Bcal_{K_{4},d}$
for each generator $f$ of $\nn \Bcal_{K_{4},d}$.  By symmetry, we can check this for
any single generator, say $f = \Bcal_{K_{4},d}e_{0000}$ equals the first column of~$\Bcal_{K_{4},d}$.  In this case ${\bf 1} +f$
consists of all pair margins equal to $\begin{smallpmatrix} 2 & 1 \\ 1 & 1 \end{smallpmatrix}$.
The table
$$
v := e_{0001} + e_{0010} + e_{0100} + e_{1000} + e_{1111}
$$
has these pair margins (i.e.~$\mathbf{1}+f=\Bcal_{K_{4},d}v$), and so $\mathbf{1}+f$ is not a
hole. 
\end{proof}

\begin{proof}[Proof of Lemma~\ref{lem:hatk4-PFI}]
  $\Acal$ has codimension one, and $\phi(\Zker\Bcal_{\tilde C_{4}})=\Zb g$ is generated by the single
  move~$g:=\begin{smallpmatrix} +1 & -1 \\ -1 & +1\end{smallpmatrix}$.  Every projected fiber $\Fbf$ without hole is of
  the form~$\{u_{0} + kg : l\le k\le l'\}$.  Thus, the move $g$ suffices to connect~$\phi(\Fbf)$.

  Since $\mathbf{1}$ is the unique hole of $\Nb\Bcal_{K_{4},d}$, there is a single fiber $\Fbf(\Bcal_{\tilde C_{4},d},
  {\bf 1})$ that has a hole.  The fiber $\Fbf(\Bcal_{\tilde C_{4},d}, {\bf 1})$ consists of the following four tables:
  \begin{equation*}
    \scalebox{0.9}{$\begin{bmatrix}
      0000 \\
      1011 \\
      1101 \\
      0110
    \end{bmatrix}$}, \quad
    \scalebox{0.9}{$\begin{bmatrix}
      0001 \\
      1010 \\
      1100 \\
      0111
    \end{bmatrix}$}, \quad
    \scalebox{0.9}{$\begin{bmatrix}
      1000 \\
      0011 \\
      0101 \\
      1110
    \end{bmatrix}$}, \quad
    \scalebox{0.9}{$\begin{bmatrix}
      1001 \\
      0010 \\
      0100 \\
      1111
    \end{bmatrix}$}.
  \end{equation*}
  The projected fiber consists of the corresponding $[14]$-marginals:
  \begin{equation*}
    \phi(\Fbf(\Bcal_{\tilde C_{4},d}, {\bf 1}))
    = \left\{
      \scalebox{0.9}{$\begin{bmatrix}
        00 \\ 00 \\ 11 \\ 11
      \end{bmatrix}$}
      ,
      \scalebox{0.9}{$\begin{bmatrix}
        01 \\ 01 \\ 10 \\ 10
      \end{bmatrix}$}
    \right\}
    = \left\{
      \scalebox{0.9}{$\begin{pmatrix}
        20 \\ 02
      \end{pmatrix}$}
      ,
      \scalebox{0.9}{$\begin{pmatrix}
        02 \\ 20
      \end{pmatrix}$}
    \right\}.
  \end{equation*}
  Thus, to connect $\phi(\Fbf(\Bcal_{\tilde C_{4},d}, {\bf 1}))$, we need the move $2g=\begin{smallpmatrix} +2 & -2 \\ -2 &
    +2
  \end{smallpmatrix}$.

  The set $\{g,2g\}$ also connects all intersections of projected fibers: If the intersection only
  involves projected fibers without holes, then it is connected by~$g$.  Otherwise, the intersection is a subset of the
  two-element set~$\phi(\Fbf(\Bcal_{\tilde C_{4},d},\mathbf{1}))$ and thus connected by~$2g$.
\end{proof}

Next, we want to lift the PFI Markov basis.  We will not follow the lifting procedure of
Section~\ref{sec:lifting-GBs-algo} in detail, but we will just compute enough lifts to ensure the compatible projection
property.
First, we compute a Markov basis of~$\tilde C_{4}$, then we compare it with the lifts.
The graph $\tilde C_{4}$ was already studied in Section~\ref{sec:4-cycle}.
As in Example~\ref{ex:codim0p23q}, we obtain a Markov basis of $\ker_{\zz} \Bcal_{\tilde C_{4},d}$ from
Theorems~\ref{thm:codimzero} and~\ref{thm:mb2xpxq}:
\begin{equation*}
  \Mcal_{1} = \left\{
    \scalebox{0.9}{$\begin{bmatrix}
      0ab0 \\ 1ab1
    \end{bmatrix}$}
    -
    \scalebox{0.9}{$\begin{bmatrix}
      0ab1 \\ 1ab0
    \end{bmatrix}$}
    ,
    \scalebox{0.9}{$\begin{bmatrix}
      000a \\
      011b \\
      101c \\
      110d
    \end{bmatrix}$}
    -
    \scalebox{0.9}{$\begin{bmatrix}
      100a \\
      111b \\
      001c \\
      010d
    \end{bmatrix}$}
    ,
    \scalebox{0.9}{$\begin{bmatrix}
      a000 \\
      b110 \\
      c011 \\
      d101
    \end{bmatrix}$}
    -
    \scalebox{0.9}{$\begin{bmatrix}
      a001 \\
      b111 \\
      c010 \\
      d100
    \end{bmatrix}$}
  \right\}.
\end{equation*}
Under $\phi$ this Markov basis projects onto the set~$\{0\}\cup\pm\Gcal$.  In particular, $\Mcal_{1}$ is not
slow-varying.
One can show that $\Mcal_{1}$ lifts the first element 
$g=\left(\begin{smallmatrix}+1&-1\\-1&+1\end{smallmatrix}\right)$ of~$\Gcal$ using the
algorithm from Section~\ref{sec:lifting-GBs-algo}.  
However, $\Mcal_{1}$ does not lift the second element~$2g$.  
 In fact, the lift of $2g$ computed according to
Section~\ref{sec:lifting-GBs-algo} contains 75 binomials, among them the elements of $2\Mcal$ of degree up to
eight.  The following result shows that it suffices to work with lifts of degree at most four.  As it turns
out, these additional lifts are sums of two elements from $\Mcal_{1}$ of degree two.

\begin{proposition}\label{prop:hatk4lift}
The set of moves 
\begin{equation*}
  \Mcal = \Mcal_{1} \cup \left\{
  \scalebox{0.9}{$\begin{bmatrix}
    0ab0 \\
    1ab1 \\
    0cd0 \\
    1cd1
  \end{bmatrix}$}
  -
  \scalebox{0.9}{$\begin{bmatrix}
    0ab1 \\
    1ab0 \\
    0cd1 \\
    1cd0
  \end{bmatrix}$} :  a,b,c,d \in \{0,1\} \right\}
\end{equation*}
is a Markov basis of  $\Bcal_{\tilde C_{4},d}$ that satisfies the compatible projection property.
\end{proposition}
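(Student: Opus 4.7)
Since $\Mcal \supseteq \Mcal_{1}$, which is already a Markov basis of $\ker_{\zz}\Bcal_{\tilde C_{4}, d}$ by the previous discussion (an application of Theorems~\ref{thm:codimzero} and~\ref{thm:mb2xpxq} to the codimension-zero decomposition $\tilde C_{4} = [12][13][23] \cup_{[23]} [23][24][34]$), and since the extra moves lie in $\ker_{\zz}\Bcal_{\tilde C_{4}, d}$, the set $\Mcal$ is a Markov basis. For the compatible projection property I would invoke the observation following Lemma~\ref{lem:gluing-lifts}: it suffices to show that $\Mcal$ is a $(\Fcal(\Bcal_{\tilde C_{4}, d}), \phi, \succeq_{0})$-lift of the PFI Markov basis $\Gcal = \{g, 2g\}$ from Lemma~\ref{lem:hatk4-PFI}, because then the image-graph intersection coincides with the $\Gcal$-graph of the $\xi$-fiber, which is connected by definition of a PFI Markov basis. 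A direct inspection shows $\phi(\Mcal) \subseteq \{0\} \cup \pm\Gcal$: the first family of $\Mcal_{1}$ projects to $\pm g$, the second and third families lie in $\ker\phi$, and the additional degree-four moves project to $\pm 2g$.

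By the remarks preceding the proposition, $\Mcal_{1}$ already lifts $g$, so it remains to verify that $\Mcal$ lifts $2g$. I would split this into two cases. Since $\mathbf{1}$ is the unique hole of $\nn\Bcal_{K_{4}, d}$, the only fiber where a $2g$-shift cannot be decomposed through an intermediate $g$-step is the hole fiber $\Fbf(\Bcal_{\tilde C_{4}, d}, \mathbf{1})$, whose four tables (listed in the proof of Lemma~\ref{lem:hatk4-PFI}) split into two pairs according to their $[14]$-margin. Direct enumeration confirms that for each pair of tables with different $[14]$-margins, one of the additional moves $\begin{bmatrix} 0ab0 \\ 1ab1 \\ 0cd0 \\ 1cd1 \end{bmatrix} - \begin{bmatrix} 0ab1 \\ 1ab0 \\ 0cd1 \\ 1cd0 \end{bmatrix}$ effects the required transition. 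For non-hole fibers, a $2g$-shift is realisable only when $\phi(v)_{01}, \phi(v)_{10} \geq 2$, which provides enough mass to place the four positive-side positions of some additional degree-four move after a suitable kernel rearrangement of $v$ by an element of $\ker_{\zz}\Bcal_{K_{4}, d}$.

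The main obstacle is the kernel-rearrangement step in the non-hole case: given $v$ with $\phi(v)_{01} \geq 2$ and $\phi(v)_{10} \geq 2$, one must construct $m_{0} \in \ker_{\zz}\Bcal_{K_{4}, d}$ so that $v + m_{0}$ simultaneously has entries at least one at all four positions $0ab1, 1ab0, 0cd1, 1cd0$ for some choice of $(a,b)$ and $(c,d)$. Although this parallels the argument used to show that $\Mcal_{1}$ lifts $g$, the requirement to fit two degree-two reductions together (possibly forcing them to share the middle indices) makes the verification noticeably more delicate, and the cleanest route will be a case analysis exploiting the known structure of the 60-element Markov basis of $\ker_{\zz}\Bcal_{K_{4}, d}$ together with the smallness of the binary fibers.
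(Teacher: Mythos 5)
Your proposal has both a factual slip and a more serious strategic gap.

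The factual slip: you claim that the second and third families of $\Mcal_{1}$ lie in $\ker\phi$, so that only the additional degree-four moves project to $\pm 2g$. That is not correct. Take the second family with $(a,b,c,d)=(0,0,1,1)$: the positive part has $[14]$-margins $00,00,11,11$ and the negative part has $10,10,01,01$, so this move projects to $2g$. The paper indeed states $\phi(\Mcal_{1})=\{0\}\cup\pm\Gcal$, so $\Mcal_{1}$ already has moves hitting $\pm 2g$; the point is that having the right projection is not enough to be a lift.

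The strategic gap is more fundamental. You propose to establish the compatible projection property by showing that $\Mcal$ is a $(\Fcal(\Bcal_{\tilde C_4,d}),\phi,\succeq_0)$-lift of $\Gcal=\{g,2g\}$. But the paper makes a point of the fact that this is precisely an example where one works with a set of moves that is \emph{not} a lift of the PFI Markov basis: the text preceding the proposition notes that the lift of $2g$ computed via Section~\ref{sec:lifting-GBs-algo} contains $75$ binomials of degree up to eight, while $\Mcal$ only adds degree-four moves, and the remark after Lemma~\ref{lem:gluing-lifts} explicitly flags Section~\ref{sec:K4-e} as the example of a proper subset of a lift that still has the compatible projection property. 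So the step you identify as ``the main obstacle'' — verifying the kernel rearrangement so that $\Mcal$ lifts $2g$ on arbitrary non-hole fibers — is not merely delicate; it is the wrong target, and no case analysis of the $60$-element Markov basis of $\ker_{\Zb}\Bcal_{K_4,d}$ will close it in the form you have set it up. The paper instead verifies the compatible projection property directly, by case analysis on whether $b$ and $b'$ equal the unique hole $\mathbf{1}$: if neither is $\mathbf{1}$ the intersection of projected fibers has no hole and is handled by $g$; if both are $\mathbf{1}$ the projected intersection is already connected because $\Mcal_1$ is a Markov basis; and in the mixed case the key observation is that a nonempty intersection forces $\Fbf(b')$ to have total degree $4$, whence one can show $\Fbf(b')$ is connected by the quadratic moves alone and the additional degree-four moves, being exactly sums of two such quadratics, bridge the $\pm 2g$ jump in the projected intersection. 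You would need to replace the non-hole-lift argument by some version of this degree-four/intersection analysis to obtain a complete proof.
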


\begin{proof}
  Suppose that~$b\neq\mathbf{1},b'\neq\mathbf{1}$.  The projected fibers $\phi(\Fbf(\Bcal_{\tilde C_{4},d},b))$
  and~$\phi(\Fbf(\Bcal_{\tilde C_{4},d},b'))$ have no holes.
  As shown in the proof of Lemma~\ref{lem:hatk4-PFI}, the intersection $\phi(\Fbf(\Bcal_{\tilde
    C_{4},d},b))\cap\phi(\Fbf(\Bcal_{\tilde C_{4},d},b'))$ is connected by~$g$.  Since $\Mcal_{1}$ lifts~$g$,
  \begin{equation*}
    \phi(\Fbf(\Bcal_{\tilde C_{4},d},b)_{\Mcal_{1}})\cap\phi(\Fbf(\Bcal_{\tilde C_{4},d},b')_{\Mcal_{1}})
    = \phi(\Fbf(\Bcal_{\tilde C_{4},d},b))\cap\phi(\Fbf(\Bcal_{\tilde C_{4},d},b'))_{\{g\}}
  \end{equation*}
  is also connected.  
  Therefore, the compatible projection property is satisfied for this intersection.

  It remains to study intersections of the form $\phi(\Fbf(\Bcal_{\tilde C_{4},d},
  \mathbf{1}))\cap\phi(\Fbf(\Bcal_{\tilde C_{4},d}, b'))$ involving the hole.  If $b'=\mathbf{1}$, then
  $\phi(\Fbf(\Bcal_{\tilde C_{4},d},\mathbf{1})_{\Mcal_{1}})\cap\phi(\Fbf(\Bcal_{\tilde C_{4},d}, b')_{\Mcal_{1}}) =
  \phi(\Fbf(\Bcal_{\tilde C_{4},d},\mathbf{1})_{\Mcal_{1}})$ is connected, since $\Mcal_{1}$ is a Markov basis
  of~$\Bcal_{\tilde C_{4},d}$.  So assume that $b'\neq\mathbf{1}$.  Furthermore, we may assume that this intersection is
  non-empty.  Then $\Fbf(\Bcal_{\tilde C_{4},d},b')$ consists of elements with the same $[14]$-margins as~$\mathbf{1}$
  and with total entry sum equal to four.  Therefore, any lift of $2g$ that connects two elements of
  $\Fbf(\Bcal_{\tilde{C}_{4},d},b')$ has degree at most four.

  As $b'\neq\mathbf{1}$, the fiber $\Fbf(\Bcal_{\tilde C_{4},d},b')$ has no hole.  Suppose that there exist
  $v_{1},v_{2}\in\Fbf(\Bcal',b')$ such that $m=v_{1}-v_{2}$ is one of the degree four moves in~$\Mcal_{1}$.  Since
  $v_{1}$ and $v_{2}$ are also of degree four, $v_{1}=m^{+}$ and $v_{2}=m^{-}$.  One can check that in this case no
  other move of $\Mcal_{1}$ can be applied to~$v_{1}$ or $v_{2}$, and so $\Fbf(\Bcal_{\tilde
    C_{4},d},b')=\{v_{1},v_{2}\}$.  Hence, $\phi(\Fbf(\Bcal_{\tilde C_{4},d},\mathbf{1}))\cap\phi(\Fbf(\Bcal_{\tilde
    C_{4},d},b'))=\emptyset$.  Therefore, whenever $\phi(\Fbf(\Bcal_{\tilde
    C_{4},d},\mathbf{1}))\cap\phi(\Fbf(\Bcal_{\tilde C_{4},d},b'))$ is not empty, then $\Fbf(\Bcal_{\tilde C_{4},d},b')$
  is connected by the quadratic moves in~$\Mcal_{1}$.

  The intersection $\phi(\Fbf(\Bcal_{\tilde C_{4},d},\mathbf{1}))\cap\phi(\Fbf(\Bcal_{\tilde C_{4},d},b'))$ consists of at
  most two points.  If it 
  consists of one point, then it is connected.  Otherwise, if it consists of two points $u_{+1},u_{-1}$, then
  $u_{+1}-u_{-1}=\pm 2g$, and $\phi(\Fbf(\Bcal_{\tilde C_{4},d},b'))$ contains $u_{+1},u_{-1}$ and
  $u_{0}=\frac12(u_{+1}+u_{-1})$.  Any path from $u_{+1}$ to $u_{-1}$ in $\phi(\Fbf(\Bcal_{\tilde C_{4},d},b')_{\Mcal_{1}})$
  passes through~$u_{0}$.  To go from $u_{+1}$ to $u_{-1}$ directly, it suffices to add to $\Mcal_{1}$ all sums of two
  quadratic moves in~$\Mcal_{1}$.
\end{proof}


Using the results of Section~\ref{sec:TFP}, the Markov basis in Proposition~\ref{prop:hatk4lift} can be glued with
itself to compute a Markov basis of the solid graph 
on the right hand side of Figure~\ref{fig:K4-e} (together with the kernel Markov basis). %
In fact, as discussed in Section~\ref{sec:degree-bounds}, any number of copies of $\tilde C_{4}$ can be glued at the
missing edge~$[14]$.  The gluing procedure is straightforward, so we do not describe it in detail here.


\section{Finiteness results for iterated toric fiber products}
\label{sec:degree-bounds}

Forming the Markov basis of the toric fiber product can lead
to moves of larger degree than any of the moves in any
of the Markov bases that went into the construction.
However, we will show that, no matter how many factors
are involved in an iterated toric fiber product over the same
base $\cala$, if the degrees of the PFI Markov basis stabilize
and all other Markov bases have bounded degree, then
there exists a bound on the degree of the glued moves.
To prove this, we need to be precise about
what is meant by iterated toric fiber product and stabilization.

The toric fiber product $\Bcal\TFP\Bcal'$ is again $\Acal$-graded
in a natural way, by the map~$\xi$.  If $\Bcal''$ is another $\Acal$-graded integer matrix, then
\begin{equation*}
  (\Bcal\TFP\Bcal')\TFP\Bcal'' = \Bcal\TFP(\Bcal'\TFP\Bcal'').
\end{equation*}
In fact, our algorithm easily generalizes to the following related algorithm which is symmetric in the three matrices $\Bcal$, $\Bcal'$ and~$\Bcal''$: Let $\Gcal$ be a Markov basis of the family of sets
\begin{equation*}
 \{ \phi(\Fbf(\Bcal, b))\cap\phi'(\Fbf(\Bcal', b'))\cap\phi''(\Fbf(\Bcal'', b'')) :  
 b \in \nn\Bcal ,  b' \in \nn\Bcal' ,  b'' \in \nn\Bcal''  \}
\end{equation*}
Then a Markov basis of $\Bcal\TFP\Bcal'\TFP\Bcal''$ is given by the union of a Markov basis of the associated
codimension-zero product $\hat\Bcal\TFP[\hat\Acal]\hat\Bcal'\TFP[\hat\Acal]\hat\Bcal''$ and the set
\begin{equation*}
  \bigcup_{g\in\Gcal}\Glues(\Lifts_{\phi}(g),\Lifts_{\phi'}(g),\Lifts_{\phi''}(g)),
\end{equation*}
where
\begin{equation*}
  \Glues(f,g,h) := \Glues(f,\Glues(g,h)) = \Glues(\Glues(f,g),h).
\end{equation*}
Similarly, we can define the toric fiber powers $\bigTFP^{r}\Bcal$.


For any $v\in\Zb^{n}$ let $\deg(v) := \max\{ \|v^{+}\|_{1}, \|v^{-}\|_{1}\}$
(then $\deg(v)$ equals the degree of the binomial $x^{v^{+}}-x^{v^{-}}$
corresponding to~$v$; see Theorem~\ref{thm:MBthm}).
For $\Mcal\subseteq\Zb^{n}$ let $\deg(\Mcal) := \sup\{\deg(v) : v\in\Mcal\}$.  
For any family $\Fcal$ of subsets of $\Zb^{n}$ the \emph{Markov degree} $\mardeg(\Fcal)$ is the
minimum of $\deg(\Mcal)$ where $\Mcal$ ranges over all Markov
bases $\Mcal$ of $\Fcal$.  For  a matrix~$\Bcal$ we define $\mardeg(\Bcal):=\mardeg(\Fcal(\Bcal))$.
Our key lemma to obtain bounds on the Markov degrees of iterated toric fiber products is the following:
\begin{lemma}
  \label{lem:iterated-degree}
  Let $\Bcal_{1},\dots,\Bcal_{r}$ be integer matrices with $\Acal$-gradings $\phi_{1},\dots,\phi_{r}$, and consider
  lifts $m_{1}\in\Zker\Bcal_{1}$, \dots, $m_{r}\in\Zker\Bcal_{r}$ of the same move~$g\in\Zb^{\Acal}$.  Then the degree
  of any glued move $\tilde m\in\Glues(m_{1},\dots,m_{r})$ is bounded by
  \begin{equation*}
    \deg(\tilde m) \le \deg(g) + \deg\Big(\max_{i=1,\dots,r}(\phi_{i}(m_{i}^{+})-g^{+})\Big).
  \end{equation*}
  Here, $\max_{i=1,\dots,r}(\phi_{i}(m_{i}^{+})-g^{+})$ is a vector obtained
  by taking the maximum in each coordinate over all the vectors $\phi_{i}(m_{i}^{+})-g^{+}$.
\end{lemma}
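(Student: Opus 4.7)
The plan is to compute $\deg(\tilde m)$ exactly in terms of the $\Acal$-margins of the lifts, and observe that the stated inequality is in fact an equality. First I recall the gluing construction (for two factors; the general $r$-fold case follows by iterating via $\Glues(m_1,\dots,m_r)=\Glues(m_1,\Glues(m_2,\dots,m_r))$): a glued move $\tilde m$ arises from non-negative extensions $\bar m_i^\pm\ge m_i^\pm$ whose $\Acal$-images $\phi_i(\bar m_i^\pm)$ agree across $i$, followed by pairing up the unit vectors comprising the $\bar m_i^\pm$ compatibly across the $r$ factors. The key basic observation is that the index map $\phi_i$ of an $\Acal$-grading sends each standard unit vector to a standard unit vector, and hence preserves $\ell_1$-norm on non-negative vectors. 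Therefore $\|\tilde m^+\|_1=\|\bar m_i^+\|_1=\|\phi_i(\bar m_i^+)\|_1$ for every $i$, and analogously for the negative part. In particular $\deg(\tilde m)$ depends only on the common $\Acal$-margins of the $\bar m_i^+$'s and the $\bar m_i^-$'s.

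Next I would identify these common margins. Applying the recipe $\phi(\bar m^+)-\phi(m^+)=v^+$ from Section~\ref{sec:lifting-TFP} one factor at a time, the common margin of the positive parts is forced to be the coordinate-wise maximum $\gamma^+:=\max_{j=1,\dots,r}\phi_j(m_j^+)$: this is the unique minimal non-negative vector dominating every $\phi_j(m_j^+)$, and every glued move realizes it. The analogous formula $\gamma^-:=\max_j\phi_j(m_j^-)$ holds for the negative parts.

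To finish, I use $\phi_i(m_i)=g$ to introduce $h_i:=\phi_i(m_i^+)-g^+=\phi_i(m_i^-)-g^-$. Coordinate by coordinate, $h_i$ is non-negative: on $\supp(g^+)$ one has $g^-=0$ and $h_i=\phi_i(m_i^-)\ge0$; on $\supp(g^-)$ one has $g^+=0$ and $h_i=\phi_i(m_i^+)\ge0$; off both supports $g^\pm=0$ and $h_i=\phi_i(m_i^+)=\phi_i(m_i^-)\ge0$. Consequently $\gamma^+=g^++\max_j h_j$ and $\gamma^-=g^-+\max_j h_j$, with non-negative summands in each case. Taking $\ell_1$-norms,
\begin{equation*}
\deg(\tilde m)=\max\bigl(\|\gamma^+\|_1,\|\gamma^-\|_1\bigr)=\max\bigl(\|g^+\|_1,\|g^-\|_1\bigr)+\|\max_j h_j\|_1=\deg(g)+\deg\bigl(\max_j h_j\bigr),
\end{equation*}
which is the claimed bound (in fact with equality). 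The only non-routine step is verifying that the iterated two-factor gluing really does force the common margin to be the coordinate-wise maximum $\gamma^\pm$; once that is in place the rest is $\ell_1$-bookkeeping using the $\Acal$-grading property.
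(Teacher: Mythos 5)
Your proof is correct and follows essentially the same route as the paper: the core fact in both cases is that the $\Acal$-margin of $\tilde m^{+}$ equals the componentwise maximum $\max_{i}\phi_{i}(m_{i}^{+})$, established for two factors by inspecting the definition of $v^{+}$ and then iterated. Your $\ell_{1}$-bookkeeping at the end — defining $h_{i}:=\phi_{i}(m_{i}^{+})-g^{+}=\phi_{i}(m_{i}^{-})-g^{-}$, checking $h_{i}\geq 0$ coordinatewise, and writing $\gamma^{\pm}=g^{\pm}+\max_{j}h_{j}$ — is a slightly more explicit way of reaching the same equality that the paper obtains via $\deg(\tilde m^{+})-\deg(g^{+})=\deg(\tilde m^{-})-\deg(g^{-})$ together with $\xi^{r}(\tilde m^{+})\geq g^{+}$.
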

\begin{proof}
  First, let $r=2$, and let $\tilde m$ be a glue of $m$ and~$m'$.
  In the notation of Section~\ref{sec:lifting-TFP},
  \begin{equation*}
    \xi(\tilde m^{+}) = \phi(\ol m^{+}) = \phi(m^{+}) + v^{+}.
  \end{equation*}
  Checking each component, one sees that
  \begin{equation*}
    \phi(m^{+}) + v^{+} = \max\big\{ \phi(m^{+}),\phi'(m^{\prime+}) \big\},
  \end{equation*}
  where $\max$ denotes the component-wise maximum.  Using induction, one sees that
  \begin{equation*}
    \xi^{r}(\tilde m^{+}) -  g^{+}  =  \max_{i = 1,\dots,r} (\phi_{i}(m_{i}^{+}) -  g^{+}),
  \end{equation*}
  where $\xi^{r}$ denotes the natural map $\Bcal_{1}\TFP\Bcal_{2}\TFP\dots\TFP\Bcal_{r}\to\Acal$.  Since $\xi^{r}(\tilde
  m)=g$ and since~$\xi^{r}$
  preserves the degree of positive vectors, $\deg(\tilde m^{+})-\deg(g^{+}) = \deg(\tilde
  m^{-})-\deg(g^{-})$ and
  \begin{multline*}
    \deg(\tilde m) = \max\{\deg(\tilde m^{+}),\deg(\tilde m^{-})\} = \deg(g) + \deg(\tilde m^{+}) - \deg(g^{+})
    \\
    = \deg(g) + \deg(\xi^{r}(\tilde m^{+})) - \deg(g^{+}) = \deg(g) + \deg(\xi^{r}(\tilde m^{+}) - g^{+}),
  \end{multline*}
  where the last equality uses that $\xi^{r}(\tilde m^{+})\ge g^{+}$ component-wise.
\end{proof}
As an example, we apply Lemma~\ref{lem:iterated-degree} to prove the following result:
\begin{theorem}
  \label{thm:degree-of-it-TFP}
  Let $\Gcal$ be a PFI Markov basis of the set of all intersected projected
  fibers 
  \begin{equation*}
    \Big\{
    \cap_{i = 1}^{r}\phi(\Fbf(\calb, b_{i}) : r \in \nn, b_{i} \in \nn \calb
    \Big\}.
  \end{equation*}
  If $\Gcal$ is finite, then
  there is a constant $C>0$ such that $\mardeg(\bigTFP^{r}\Bcal)\le C$ for any~$r>0$.
\end{theorem}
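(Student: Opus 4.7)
The plan is to construct, for each $r$, a Markov basis of $\bigTFP^{r}\Bcal$ whose elements have degree bounded independently of $r$, following the recursive gluing strategy of Section~\ref{sec:TFP}. Such a Markov basis is the union of a kernel Markov basis of the associated codimension-zero product $\bigTFP[\hat\Acal]^{r}\Bcal^{\phi}$ and the set of glues of lifts of elements of the finite PFI Markov basis $\Gcal$. That this union is indeed a Markov basis of $\bigTFP^{r}\Bcal$ follows by induction on~$r$ from Theorem~\ref{thm:GB-lifting} and the iterated version of Lemma~\ref{lem:gluing-lifts}, using that $\Gcal$ already connects every $r$-fold intersection of projected fibers by hypothesis.

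For the kernel part I would iterate Theorem~\ref{thm:codimzero}. Since $\Bcal^{\phi}$ is fixed, a Markov basis of $\Bcal^{\phi}$ has a finite maximum degree $\mardeg(\Bcal^{\phi})$. The only ingredients introduced at each step of iterating Theorem~\ref{thm:codimzero} are $\Quads$ moves (of degree $2$) and codimension-zero lifts of existing moves; inspection of the codimension-zero lifting formula in Section~\ref{sec:kernel-Grobner-tfp} shows that such lifts preserve degree. Hence every move in the kernel part has degree at most $\max\{2,\mardeg(\Bcal^{\phi})\}$, a bound that does not depend on~$r$.

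For each $g\in\Gcal$, fix once and for all a finite $\phi$-lift $\Mcal_{g}\subset\ker_{\zz}\Bcal$ of the singleton $\{g\}$; finiteness follows from Proposition~\ref{prop:lifting-procedure-GB} combined with Hilbert's basis theorem applied to the auxiliary inequality Markov basis problem. Every glued move arising in the $r$-fold construction has the form $\tilde m\in\Glues(m_{1},\dots,m_{r})$ with $m_{i}\in\Mcal_{g}$ for some $g\in\Gcal$, and Lemma~\ref{lem:iterated-degree} yields
\[
\deg(\tilde m)\;\le\;\deg(g) \;+\; \deg\Big(\max_{i=1,\dots,r}(\phi(m_{i}^{+})-g^{+})\Big).
\]
Because $\Mcal_{g}$ is finite and $\phi$ sends nonnegative vectors to nonnegative vectors, the finite set $\{\phi(m^{+}) : m\in\Mcal_{g}\}\subseteq\nn^{\Acal}$ has a coordinatewise supremum $w_{g}\in\nn^{\Acal}$. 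The coordinatewise maximum over any $r$-tuple from $\Mcal_{g}$ is bounded above by $w_{g}-g^{+}$, independently of~$r$. Therefore $\deg(\tilde m)\le \deg(g)+\|w_{g}-g^{+}\|_{1}$, and taking the maximum over the finitely many $g\in\Gcal$ together with the kernel bound produces the desired universal constant~$C$.

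The main obstacle is not the degree bookkeeping, which Lemma~\ref{lem:iterated-degree} handles cleanly, but rather the existence of a single finite lift $\Mcal_{g}$ that works uniformly across all values of~$r$. The key point is that the lifts are chosen purely in $\ker_{\zz}\Bcal$, depending only on $\Bcal$ and $g$ and not on how many other factors are glued in; one then merely reuses the same $\Mcal_{g}$ in every factor. Verifying that the resulting glued set together with the kernel Markov basis indeed generates all of $\ker_{\zz}\bigTFP^{r}\Bcal$ requires an inductive application of the gluing principle, and is where I would spend the most care in writing the proof.
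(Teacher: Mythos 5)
Your proof is correct and follows essentially the same strategy as the paper: bound the kernel Markov basis degree by $\max\{2,\deg(\hat\Mcal)\}$ and invoke Lemma~\ref{lem:iterated-degree} to bound the glued moves. Where the paper's proof is terse, your argument usefully makes explicit the two points it leaves implicit -- that a single finite lift $\Mcal_g\subset\ker_{\zz}\Bcal$ can be fixed for each $g\in\Gcal$ and reused in every factor, and that the coordinatewise supremum $w_g$ of $\{\phi(m^+):m\in\Mcal_g\}$ then makes the bound from Lemma~\ref{lem:iterated-degree} independent of~$r$.
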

\begin{proof}
  Let $\hat\Mcal$ be a Markov basis of 
  $\Bcal^{\phi}$.  The degree of the Markov
  basis of the associated co\-di\-men\-sion-zero toric fiber product is bounded by $\max \{ 2, \deg(\hat\Mcal)\}$.  To
  prove the statement, it remains to find a bound for the glued moves that is independent of~$r$.  Such a bound is given
  by Lemma~\ref{lem:iterated-degree}.
\end{proof}
The proof of Theorem~\ref{thm:degree-of-it-TFP} is constructive in the sense that a Markov basis of the toric fiber
powers can be obtained explicitly by following the constructions discussed in this paper.  In the same way, a numerical
value for the constant $C$ can be computed explicitly.  The same remark holds for the other results of this section.

\begin{corollary}
  \label{cor:degree-of-it-TFP}
  Let $\Bcal$ be an $\Acal$-graded integer matrix such that $\nn\Bcal^{\phi}$ is normal.
  Then $\sup_{r\in\Nb}\mardeg(\bigTFP^{r}\Bcal)$ is finite.
\end{corollary}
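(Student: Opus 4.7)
The plan is to verify the hypothesis of Theorem~\ref{thm:degree-of-it-TFP} by exhibiting a single finite PFI Markov basis that serves simultaneously for intersections of any number of projected fibers of $\Bcal$. Normality of $\Nb\Bcal^{\phi}$ is precisely what makes this possible, through the uniform inequality description of projected fibers developed in Section~\ref{sec:proj-GBs}.

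First I would apply Lemma~\ref{lem:normal-noholes}: since $\Nb\Bcal^{\phi}$ is normal, there exist a fixed lattice $\Lcal \subseteq \Zb^{t}$ and a fixed integer matrix $D$ such that, for every $b \in \Nb\Bcal$ with nonempty fiber, one has $\phi(\Fbf(\Bcal,b)) = \{u \in \Lcal + v(b) : Du \ge c(b)\}$ for some $v(b)$ and $c(b)$. The crucial point is that $\Lcal$ and $D$ do not depend on~$b$.

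Next I would invoke Remark~\ref{rem:intersection-in-TF-powers}. Because all projected fibers share the same $\Lcal$ and~$D$, the intersection of any finite family of them is again of the same shape:
\begin{equation*}
  \bigcap_{i=1}^{N} \phi(\Fbf(\Bcal,b_{i})) \;=\; \big\{ u \in \Lcal + v_{0} \;:\; Du \ge \textstyle\max_{i} c(b_{i}) \big\},
\end{equation*}
whenever the intersection is nonempty (take $v_{0}$ witnessing nonemptiness). Consequently, any $(\Lcal,D)$-Markov basis $\Gcal$ connects every such intersection, and therefore constitutes a PFI Markov basis for the family of all intersected projected fibers arising in every toric fiber power $\bigTFP^{r}\Bcal$, \emph{uniformly} in~$r$ and in the number of intersected fibers. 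By Lemma~\ref{lem:ineq-Markov}, such a $\Gcal$ can be obtained as the preimage under $D$ of a Markov basis of the lattice $\Zb\tilde D$ with $\tilde D = DL$, and since lattices always admit finite Markov bases by Hilbert's basis theorem, $\Gcal$ can be chosen finite. Feeding this finite $\Gcal$ into Theorem~\ref{thm:degree-of-it-TFP} yields the desired uniform bound $\sup_{r\in\Nb}\mardeg(\bigTFP^{r}\Bcal) < \infty$.

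The hard part is essentially absent once normality is assumed: the real work was already done in Lemma~\ref{lem:normal-noholes}, whose conclusion depends on having no holes in $\Nb\Bcal^{\phi}$. Without normality, the set of holes of each projected fiber could vary with $b$, and the uniform inequality description underlying Remark~\ref{rem:intersection-in-TF-powers} would fail, so one would not obtain a single finite $\Gcal$ that works across all~$r$.
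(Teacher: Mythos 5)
Your proposal is correct and follows essentially the same route as the paper's proof: use the normality of $\Nb\Bcal^{\phi}$ to get a single fixed inequality description of all projected fibers (the paper writes this out directly; you invoke Lemma~\ref{lem:normal-noholes}), observe that intersections are described by the componentwise max of the right-hand sides, conclude a single finite inequality Markov basis is a PFI Markov basis uniformly in $r$, and feed this into Theorem~\ref{thm:degree-of-it-TFP}. The only (harmless) difference is that you are slightly more careful than the paper in tracking the underlying lattice coset $\Lcal + v(b)$ and the nonemptiness condition.
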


\begin{proof}
  Let $D$ be an integer matrix such that for all $b\in\Nb\Bcal$ 
  there exists $c=c(b)$ 
  such that
  \begin{equation*}
    \phi(\Fbf(\Bcal,b)) 
    = \big\{
      v\in\Zb^{\Acal} : D v \ge c
    \big\}.
  \end{equation*}
  This implies that for all $b_{1},\dots,b_{r}\in\Nb\Bcal$, 
  \begin{equation*}
    \phi(\Fbf(\Bcal,b_{1}))\cap\dots\cap\phi(\Fbf(\Bcal,b_{r})) = \big\{
    v\in\Zb^{\Acal} : D v \ge \max_{i=1}^{r}c(b_{i})
    \big\}.
  \end{equation*}
  This shows that an inequality Markov basis of $D$ is a finite PFI Markov basis that works for any toric fiber power
  $\bigTFP^{r}\Bcal$.  Therefore, we can apply Theorem~\ref{thm:degree-of-it-TFP}.
\end{proof}

Lemma~\ref{lem:iterated-degree} can be applied to
more general situations.  The crucial point is that there needs to be a single finite Markov basis.
For example, Corollary~\ref{cor:degree-of-it-TFP} holds true if there are only finitely many holes.  As further examples, we
mention the following result:

\begin{theorem}
  Let $\Bcal_{1},\dots,\Bcal_{s}$ be $\Acal$-graded matrices such that the
  semigroups $\nn \Bcal_{1}^{\phi_{1}}$,$\ldots$, $\nn \Bcal_{s}^{\phi_{s}}$ are normal.  Then there is a constant $C\in\Nb$ such that
  \begin{equation*}
    \mardeg \left( (\bigTFP^{r_{1}}\Bcal_{1})\TFP(\bigTFP^{r_{2}}\Bcal_{2})\TFP\dots\TFP(\bigTFP^{r_{s}}\Bcal_{s}) \right) \le C
  \end{equation*}
  for all $r_{1},\dots,r_{s}\in\Nb$.
\end{theorem}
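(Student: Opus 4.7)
The plan is to adapt the proof of Corollary~\ref{cor:degree-of-it-TFP} to the multi-matrix setting by combining the projected fiber descriptions of all $s$ matrices into a single inequality system, so that one finite PFI Markov basis suffices simultaneously for every choice of $(r_1,\dots,r_s)$. First, for each $i$, normality of $\nn\Bcal_i^{\phi_i}$ together with Lemma~\ref{lem:normal-noholes} yields a lattice $\Lcal_i \subseteq \Zb^t$ and an integer matrix $D_i$ such that
$$
\phi_i(\Fbf(\Bcal_i, b)) = \{u \in \Lcal_i + v_i(b) : D_i u \ge c_i(b)\}
$$
for every $b \in \Nb\Bcal_i$ and suitable $v_i(b), c_i(b)$. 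I would then set $\Lcal := \bigcap_{i=1}^{s} \Lcal_i$ and form $D$ by vertically stacking $D_1,\dots,D_s$. A direct coset calculation shows that any nonempty intersection of the form $\bigcap_{i,j} \phi_i(\Fbf(\Bcal_i, b_{i,j}))$, with $i \in [s]$ and $j \in [r_i]$, equals $\{u \in \Lcal + w : D u \ge c\}$ for any $w$ in the intersection and a $c$ obtained componentwise from the $c_i(b_{i,j})$. Hence the entire family of such intersections is contained in $\Fcin(\Lcal, D)$, independently of $(r_1,\dots,r_s)$.

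Next, by Lemma~\ref{lem:ineq-Markov}, a finite inequality Markov basis $\Gcal$ of $\Fcin(\Lcal, D)$ exists. This $\Gcal$ serves as a PFI Markov basis for every iterated product $(\bigTFP^{r_1}\Bcal_1) \TFP \cdots \TFP (\bigTFP^{r_s}\Bcal_s)$. For each pair $(g, i) \in \Gcal \times \{1,\dots,s\}$, Proposition~\ref{prop:lifting-procedure-GB} provides a finite $\phi_i$-lift $\Mcal_{g,i} \subseteq \Zker\Bcal_i$ of $g$. Since there are finitely many such pairs and each $\Mcal_{g,i}$ is finite, there is a uniform upper bound $M$ on $\|\phi_i(m^+)\|_{\infty}$ as $(g,i,m)$ range over all these choices.

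Finally, I would apply Lemma~\ref{lem:iterated-degree} to any glue $\tilde m$ built from lifts $m_{i,j} \in \Mcal_{g,i}$ of a single $g \in \Gcal$, one per copy of each $\Bcal_i$ in the product. The lemma gives
$$
\deg(\tilde m) \le \deg(g) + \deg\bigl(\max_{i,j}(\phi_i(m_{i,j}^+) - g^+)\bigr) \le \max_{g' \in \Gcal}\deg(g') + tM,
$$
where the inner maximum is componentwise and $t$ is the number of columns of $\Acal$. The associated codimension-zero toric fiber product of the iterated product has, by iterated application of Theorem~\ref{thm:codimzero}, a Markov basis assembled from $\Lifts$ and $\Quads$ whose degree is bounded by $\max\{2, \deg(\hat\Mcal_1),\dots,\deg(\hat\Mcal_s)\}$, where $\hat\Mcal_i$ is a fixed finite Markov basis of $\Bcal_i^{\phi_i}$; this bound is independent of the $r_i$. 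Taking $C$ to be the maximum of these two bounds completes the argument. The main obstacle is the setup in the first paragraph: verifying that one can uniformly describe all intersections of projected fibers coming from matrices with possibly different lattice structures via a single family $\Fcin(\Lcal, D)$, so that the PFI Markov basis need only be computed once, independently of the exponents $r_i$.
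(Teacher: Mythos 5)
The paper states this theorem without giving an explicit proof, only remarking that Lemma~\ref{lem:iterated-degree} ``can be applied to more general situations'' and that the ``crucial point is that there needs to be a single finite Markov basis.'' Your proposal is a correct and complete filling-in of exactly the argument the paper gestures at: it adapts the proof of Corollary~\ref{cor:degree-of-it-TFP} by observing that all the intersected projected fibers, for any choice of $(r_1,\dots,r_s)$, live in a single family $\Fcin(\Lcal, D)$ once one takes $\Lcal = \bigcap_i \Lcal_i$ (with the standard fact that a nonempty intersection of cosets of different lattices is a coset of the intersection lattice) and stacks the matrices $D_i$. From there, finiteness of an $(\Lcal,D)$-Markov basis, finiteness of the lifts for each $(g,i)$, and Lemma~\ref{lem:iterated-degree} give the uniform bound, and Theorem~\ref{thm:codimzero} handles the codimension-zero part exactly as in the single-matrix case. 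Your crude degree estimate $\deg(g) + tM$ is valid since $\phi_i(m^+) \ge g^+ \ge 0$ coordinatewise, so the inner max has $\ell_\infty$-norm at most $M$, hence $\ell_1$-norm at most $tM$. This is the same approach the paper intends, with the multi-lattice bookkeeping made explicit.
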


The same ideas can be applied in the specific situation of hierarchical models, taking advantage of the situations where
we know that the semigroup of the associated codimension-zero product is normal.  For example:

\begin{corollary}
\label{cor:K_2N-bound}
  Consider the complete bipartite graph~$K_{2,N}$ with $2+N$ vertices.
  For each $k \in \nn$, there is a constant  $C(k) \in\Nb$ such that
  for all $N \in \nn $ and $d\in\Nb^{N}$ with $d_{1} = 2$ and  $d_{2} = k$, 
  $\mardeg(\Bcal_{K_{2,N},d})\le C(k)$.
\end{corollary}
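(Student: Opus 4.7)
The plan is to realise $\Bcal_{K_{2,N},d}$ as an iterated toric fiber product over a base that depends only on $(d_{1},d_{2})=(2,k)$, and to extend the reasoning of Theorem~\ref{thm:degree-of-it-TFP} to cover an \emph{infinite} family of factors indexed by the remaining vertex sizes~$d_{v}$. Splitting off one leaf vertex $v$ on the large side at a time via Proposition~\ref{prop:tfp-hierarchical} yields
\[
\Bcal_{K_{2,N},d} \;=\; \Bcal_{\Gamma_{3},(2,k,d_{3})}\TFP[\Acal]\cdots\TFP[\Acal]\Bcal_{\Gamma_{N+2},(2,k,d_{N+2})},
\]
where $\Gamma_{v}=[1v][2v]$ and $\Acal=\Bcal_{[1][2],(2,k)}$ depends only on~$k$. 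By Proposition~\ref{prop:tfp-hierarchical}(2), the associated codimension-zero factor for each $\Gamma_{v}$ is the triangle $\tilde\Gamma_{v}=[12][1v][2v]$; since $d_{1}=2$, Theorem~\ref{thm:C3-normal} (case $(2,p,q)$) guarantees that $\nn\Bcal_{\tilde\Gamma_{v},(2,k,d_{v})}$ is normal for every~$d_{v}$.

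Next I would build a PFI Markov basis $\Gcal(k)$ that is uniform across all~$d_{v}$. Lemma~\ref{lem:normal-noholes} applied to the explicit cone inequalities of Proposition~\ref{prop:3cycleineq} shows that once the $[1v]$- and $[2v]$-margins are fixed, the induced inequalities on the $\{1,2\}$-marginal involve only subsets $A\subseteq[k]$, and the dependence on~$d_{v}$ and on the fixed margins is confined to the right-hand side. Hence a single integer matrix $D$ and lattice $\LL$ (depending only on~$k$) describe every projected fiber $\phi_{v}(\Fbf(\Bcal_{\Gamma_{v}},b))$, and by Remark~\ref{rem:intersection-in-TF-powers} the same $D,\LL$ describe every intersection of such fibers. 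A finite $(\LL,D)$-Markov basis, which exists by Hilbert's theorem and can be extracted via Lemma~\ref{lem:ineq-Markov}, then serves as the uniform~$\Gcal(k)$.

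With $\Gcal(k)$ in hand, I would bound the Markov degree of $\Bcal_{K_{2,N},d}$ by its two standard contributions. For the codimension-zero/kernel part, the Graver basis of each triangle has degree at most $2\min(k,d_{v})\le 2k$ by Theorem~\ref{thm:mb2xpxq}, and iterating Theorem~\ref{thm:codimzero} (whose Quads add only degree~$2$) preserves this bound. For the glued PFI-lifts I would apply Lemma~\ref{lem:iterated-degree}: because each $g\in\Gcal(k)$ has vanishing $\{1\}$- and $\{2\}$-margins, the single-slice tensor $g\otimes e_{l}$ lies in $\Zker\Bcal_{\Gamma_{v}}$ and satisfies $\phi_{v}((g\otimes e_{l})^{+})=g^{+}$, so the maximum term in Lemma~\ref{lem:iterated-degree} vanishes and each glued move has degree at most $\deg\Gcal(k)$. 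Combining the two contributions yields the required constant $C(k)$, independent of $N$ and of~$d_{3},\dots,d_{N+2}$.

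The main obstacle is the technical step of promoting single-slice moves to a genuine $(\Fcal(\Bcal_{\Gamma_{v}}),\phi_{v},\succeq_{0})$-lift in the sense of Definition~\ref{def:lift}: for any $u,u'$ in a common fiber with $\phi_{v}(u-u')=g$, one must use a kernel adjustment $m_{0}\in\Zker\Bcal_{\Gamma_{v}}\cap\Zker\phi_{v}=\Zker\Bcal_{\tilde\Gamma_{v}}$ to concentrate enough mass of $u$ into a single slice where $g\otimes e_{l}$ becomes applicable. If concentration fails for the naive single-slice set, one falls back on Proposition~\ref{prop:lifting-procedure-GB} and uses the full $(\LL_{g},D_{g})$-Markov basis of lifts; its defining data inherit the uniform structure of $D$ and $\LL$, so Lemma~\ref{lem:iterated-degree} still delivers a bound depending only on~$k$.
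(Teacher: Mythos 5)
Your high-level strategy matches the paper's: decompose $\Bcal_{K_{2,N},d}$ as an iterated toric fiber product of three-node paths $\Gamma_v=[1v][2v]$ over the base $\Acal=\Bcal_{[1][2],(2,k)}$, invoke Proposition~\ref{prop:tfp-hierarchical} and Theorem~\ref{thm:C3-normal} for normality of the triangles $\tilde\Gamma_v$, and use Proposition~\ref{prop:3cycleineq} plus Remark~\ref{rem:intersection-in-TF-powers} to get a PFI Markov basis $\Gcal(k)$ depending only on~$k$. That part is fine.

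The gap is in the degree bound for the glued moves. You assert that the single-slice lifts $g\otimes e_l$ form a $(\Fcal(\Bcal_{\Gamma_v}),\phi_v,\succeq_0)$-lift, which would make the maximum term in Lemma~\ref{lem:iterated-degree} vanish and give $\deg(\tilde m)\le\deg\Gcal(k)$. This is false. Proposition~\ref{prop:lifting-3string} shows that a genuine lift must include moves corresponding to \emph{all} directed acyclic multigraphs with the prescribed in/out degree differences, not only the single-edge ones, and the multi-edge multigraphs give multi-slice lifts with nonvanishing defect $\phi(m^+)-g^+$. These lifts are not redundant: for example, for $g=\begin{smallpmatrix}1&-1&0\\-1&1&0\end{smallpmatrix}$ the multigraph $1\to3\to2$ produces a degree-$4$ lift with defect of degree~$2$, and the corresponding glue move appears irreducibly in the Markov basis of Theorem~\ref{thm:MB-C4}. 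So a bound of $\deg\Gcal(k)$ is not attainable, and in general one must add the defect term.

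Your fallback, ``fall back on Proposition~\ref{prop:lifting-procedure-GB}; the $(\LL_g,D_g)$-data inherit uniform structure, so Lemma~\ref{lem:iterated-degree} still delivers a bound depending only on $k$,'' is precisely the nontrivial step and it is not substantiated. The lattice $\LL_g$ and matrix $D_g$ from Lemma~\ref{lem:lift-to-ineq} live in the ambient space $\Zb^{2\times k\times d_v}$, whose dimension grows with $d_v$, so neither the $(\LL_g,D_g)$-Markov basis nor the defect $\phi(m^+)-g^+$ can be assumed a priori to be bounded independently of~$d_v$. The paper closes this gap by invoking Proposition~\ref{prop:lifting-3string}: the lifts of a fixed $g$ are parametrized by (multigraph, choice of labels $a_{ij}\in[d_v]$), the defect $\phi(m^+)-g^+$ depends only on the multigraph and not on the labels, and the finitely many multigraphs depend only on $g$ (hence on $k$). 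This yields the constant $d^*(g)$ used in Lemma~\ref{lem:iterated-degree}, independent of~$d_v$ and~$N$. That explicit combinatorial control over the lifting defect is the ingredient your proposal is missing, and simply citing the inequality-Markov-basis machinery does not replace it.
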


\begin{proof}
$K_{2,N}$ is obtained by gluing $N$ paths of $3$ nodes on the pair of 
end-points of the missing edge.  With our conditions on $d_{1}$, each such path
corresponds to a hierarchical model of $K_{2,1}$ with $d = (2,k, d_{i})$.  The
associated codimension zero product is a product of cycles $K_{3}$ (Proposition~\ref{prop:tfp-hierarchical}), and the
semigroup of $K_{3}$ is normal for our choice of parameters by Theorem~\ref{thm:C3-normal}.  More precisely, by
Proposition~\ref{prop:3cycleineq}, the projected fibers have an inequality description that is independent of~$d_{i}$.
Therefore, in this situation, there exists a finite inequality Markov basis~$\Gcal$ (any solution of
Problem~\ref{prob:sumsineqMB} with $t=d_{2}-1$) that can be used as a PFI Markov basis, independent of~$r$ and the
choice of $d_{3},\dots,d_{2+N}$.

Proposition~\ref{prop:lifting-3string} gives a combinatorial description of the lifts.  In
particular, there is a finite number of combinatorial types of lifts.  This finite number is independent of~$d_{i}$.
Moreover, if $m$ lifts~$g$, then the quantity $\phi(m^{+})-g^{+}$ only depends on the combinatorial type of the lift.
Therefore, there is a constant $d^{*}(g)$ with
\begin{equation*}
  \deg\Big(\max_{m\in\Lifts(g)}(\phi(m^{+})-g^{+})\Big) \le d^{*}(g),
\end{equation*}
and this bound is again independent of~$d_{i}$.  By Lemma~\ref{lem:iterated-degree}, the degree of any glued move is upper
bounded by~$\max_{g\in\Gcal}\deg(g)+ d^{*}(g)$.
Therefore, the statement follows as in the proof of Theorem~\ref{thm:degree-of-it-TFP}.
\end{proof}

Note that results from~\cite{HillarSullivant2012} imply a finiteness result
of this type for any fixed $N$.  The novelty of Corollary~\ref{cor:K_2N-bound}
is that a bound holds regardless of~$N$.

It is a nontrivial problem to determine the number $C(k)$ from Corollary \ref{cor:K_2N-bound}.
For $k = 2$, a Markov basis was explicitly calculated in~\cite{KRS14:Positive_Margins_and_Prim_Dec},
and the result there implies that $C(2) = 4$.
Careful reasoning about the lifting procedure for the PF Markov basis that is described in
Proposition~\ref{prop:lifting-3string} can be used to produce bounds on $C(k)$ in other
instances.  For example, it is not difficult to show that $C(3) = 6$.  We do not
know the growth rate of~$C(k)$.

The conditions on $d$ in the statement of Corollary~\ref{cor:K_2N-bound} are chosen such that all factors arising in the
toric fiber product have normal semigroups.  We conjecture that this assumption is not necessary; i.e.~we conjecture
that there is a function $C(d_{1},d_{2})\in\Nb$ such that $\deg(\Bcal_{K_{2,N},d})\le C(d_{1},d_{2})$.  More
generally, we formulate the following conjecture:
\begin{conjecture}
  Let $\Bcal_{1},\dots,\Bcal_{s}$ be arbitrary $\Acal$-graded matrices.  Then there is a constant $C\in\Nb$ such that
  \begin{equation*}
    \mardeg\left( (\bigTFP^{r_{1}}\Bcal_{1})\TFP(\bigTFP^{r_{2}}\Bcal_{2})
    \TFP\dots\TFP(\bigTFP^{r_{s}}\Bcal_{s}) \right) \le C
  \end{equation*}
  for all $r_{1},\dots,r_{s}\in\Nb$.  
\end{conjecture}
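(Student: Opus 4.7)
The plan is to generalize the strategy of Corollary~\ref{cor:degree-of-it-TFP}, which settles the conjecture when each $\nn\Bcal_i^{\phi_i}$ is normal. That proof has two ingredients: (i) a uniform inequality description of intersections $\bigcap_j \phi_i(\Fbf(\Bcal_i,b_i^{(j)}))$, which yields a finite PFI Markov basis independent of the exponents $r_i$; and (ii) Lemma~\ref{lem:iterated-degree}, which bounds glued moves in terms of the PFI Markov basis degree and the ``lifting defects'' $\deg(\phi(m^+)-g^+)$. Only the first ingredient uses normality, so the problem is to produce a finite PFI Markov basis of bounded degree in the non-normal case.

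First I would treat the case where each semigroup $\nn\Bcal_i^{\phi_i}$ has only finitely many holes. By Lemma~\ref{lem:normal-noholes} applied to the normalization $N\Bcal_i^{\phi_i}$, the projected fibers and their intersections have a uniform inequality description up to a finite ``correction set'' of holes; the technique of Section~\ref{sec:K4-e} shows how to supplement the inequality Markov basis with finitely many moves that handle each hole-containing fiber. Because the holes of the iterated product $\bigTFP^{r_i}\Bcal_i$ over $\Acal$ are controlled by the holes of each factor (via the description of $\xi$-fibers as intersections in Lemma~\ref{lem:proj-vertices}), the correction set stays bounded as $r_i\to\infty$. Combined with Lemma~\ref{lem:iterated-degree} and a degree bound on the kernel Markov basis of the associated codimension-zero product (which is itself an iterated TFP over $\tilde\Acal$ with $\ker\tilde\Acal=0$, so a uniform bound follows from Theorem~\ref{thm:codimzero} and induction), this handles the finite-holes case.

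For the general case one must cope with possibly infinitely many holes. My proposed route is an equivariant Noetherianity argument in the spirit of~\cite{HillarSullivant2012}: the symmetric groups $\prod_i S_{r_i}$ act on $(\bigTFP^{r_1}\Bcal_1)\TFP\dots\TFP(\bigTFP^{r_s}\Bcal_s)$ by permuting like factors, and both the families of intersected projected fibers and the collections of candidate Markov moves respect this action. The goal would be to show that, in the equivariant sense, the projected-fiber descriptions stabilize as the $r_i$ grow, so that a single finite equivariant PFI Markov basis suffices for all $r_1,\dots,r_s$. Feeding this into Lemma~\ref{lem:iterated-degree} yields a uniform bound on the degree of glues, and hence on $\mardeg$.

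The main obstacle is exactly the step where normality fails by an infinite amount: the structure of the hole set of an affine semigroup can be arbitrarily complicated~\cite{HemmeckeTakemureYoshida09:Computing_holes_in_semigroups}, and I do not know of a purely combinatorial tool that controls how holes of $\nn\Bcal_i^{\phi_i}$ propagate into the fibers of the iterated products. I expect that the crux of the proof lies in establishing an equivariant finiteness statement for these hole configurations — likely requiring a genuine extension of the Hillar--Sullivant machinery to sequences of toric fiber powers — and that the degree bound $C$ one eventually obtains will be non-constructive, in contrast to the explicit bounds attainable in the normal and finite-hole cases.
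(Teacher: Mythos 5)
This statement is a \emph{conjecture} in the paper, not a theorem: the authors explicitly leave it open, motivating it by the fact that their bound in Corollary~\ref{cor:degree-of-it-TFP} and Corollary~\ref{cor:K_2N-bound} requires normality of the associated semigroups $\nn\Bcal_i^{\phi_i}$. There is no proof in the paper to compare your attempt against. What you have written is a plausible attack sketch, not a proof, and you yourself flag this: you describe the crux --- controlling the hole configurations of the intermediate semigroups uniformly as the powers $r_i$ grow --- as something you ``do not know of a purely combinatorial tool'' to handle. That is precisely the gap that keeps the statement a conjecture rather than a corollary.

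To make the gap concrete: the first paragraph of your argument correctly identifies that only the construction of a single finite PFI Markov basis, valid uniformly over all $r_1,\dots,r_s$, depends on normality; Lemma~\ref{lem:iterated-degree} does the rest. Your second paragraph (finitely many holes) is credible and essentially extends the method of Section~\ref{sec:K4-e}, though you would need to check carefully that the correction moves needed to jump over holes do not grow in degree as one intersects more projected fibers --- the paper handles this in the specific $\tilde C_4$ example but does not give a general argument. Your third paragraph (infinitely many holes, equivariant Noetherianity) is where the proof actually breaks down: the Hillar--Sullivant machinery~\cite{HillarSullivant2012} gives finiteness for a fixed factor structure as $d$ grows, but what is needed here is a bound uniform in the number of toric-fiber-power factors, which is a different stabilization axis, and no existing theorem delivers it. Absent that ingredient, the ``single finite equivariant PFI Markov basis'' in your sketch is an assertion, not a consequence. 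So: the proposal does not prove the statement, but it does accurately diagnose which part of Corollary~\ref{cor:degree-of-it-TFP} would need to be generalized, and where the genuine mathematical difficulty lies.
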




\begin{figure}
  \centering
  \begin{tikzpicture}[scale=0.5]
    \node at (-1,1) {a)};
    \fill (0,1) circle (4pt);
    \fill (0,0) circle (4pt);
    \fill (0,-1) circle (4pt);
    \fill (1,1) circle (2pt);
    \draw (1,1) edge (0,1) edge (0,0) edge (0,-1);
    \node at (2,0) {$\times$};
    \fill (3,1) circle (4pt);
    \fill (3,0) circle (4pt);
    \fill (3,-1) circle (4pt);
    \fill (4,0) circle (2pt);
    \draw (4,0) edge (3,1) edge (3,0) edge (3,-1);
    \node at (5,0) {$\times$};
    \fill (6,1) circle (4pt);
    \fill (6,0) circle (4pt);
    \fill (6,-1) circle (4pt);
    \fill (7,-1) circle (2pt);
    \draw (7,-1) edge (6,1) edge (6,0) edge (6,-1);
    \node at (8,0) {$=$};
    \fill (9,1) circle (4pt);
    \fill (9,0) circle (4pt);
    \fill (9,-1) circle (4pt);
    \fill (10,1) circle (2pt);
    \fill (10,0) circle (2pt);
    \fill (10,-1) circle (2pt);
    \draw (10,1) edge (9,1) edge (9,0) edge (9,-1);
    \draw (10,0) edge (9,1) edge (9,0) edge (9,-1);
    \draw (10,-1) edge (9,1) edge (9,0) edge (9,-1);
  \end{tikzpicture}
  \hfil
  \begin{tikzpicture}[scale=0.5]
    \node at (-1,1) {b)};
    \filldraw[fill=gray] (0,1) -- (0.5,0) -- (0,-1) -- cycle;
    \fill (0,1) circle (4pt);
    \fill (0.5,0) circle (4pt);
    \fill (0,-1) circle (4pt);
    \fill (1.3,0) circle (2pt);
    \draw (1.3,0) edge (0,1) edge (0.5,0) edge (0,-1);
  \end{tikzpicture}
  \caption{a) Gluing three three-stars to obtain~$K_{3,3}$.
    b) The associated codimension-zero factor~$\tilde K_{4}$. }
  \label{fig:K33}
\end{figure}
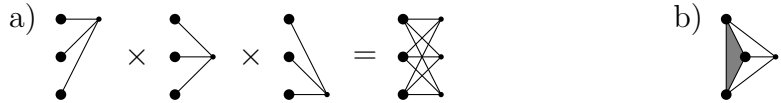

As an example we bound the Markov basis of the complete bipartite graph $K_{3,N}$ with binary nodes.  $K_{3,N}$ can
be obtained by gluing $N$ three-stars; see Figure~\ref{fig:K33}\,a).  For brevity, we just summarize the main results
here and refer to~\cite{RauhSullivant14:K3N} for the details.

The Markov basis of the associated codimension-zero product arises by lifting moves from the Markov basis of the
codimension-zero factor~$\tilde{K}_{4}$; see Figure~\ref{fig:K33}\,b).  The Markov basis of $\tilde{K}_{4}$ has 20
elements of degrees four and six.  As shown in \cite[Section~5]{RauhSullivant14:K3N}, the moves that arise by gluing
these elements are redundant in view of the moves that arise by lifting the PFI Markov basis.

To understand the projected fibers, we need to understand the semigroup of~$\tilde K_{4}$.  Following the algorithm
from~\cite{HemmeckeTakemureYoshida09:Computing_holes_in_semigroups} we find the following: This semigroup is not normal.
Even worse, it has infinitely many holes.  Fortunately, within the projected fibers the holes are vertices.  Therefore,
the holes can be separated from their projected fibers by linear inequalities.  To be precise, there are two linear
forms $l_{1},l_{2}$ with the following property: If $h$ is a hole of a projected fiber~$\phi(\Fbf)$, then
$l_{i}(h) < \min\{ l_{i}(u) : u\in\phi(\Fbf) \}$ for some~$i$.
This allows to give an inequality description of the projected fibers.  The corresponding inequality Markov basis can be
used as a PFI Markov basis.

The inequality Markov basis consists of 16 moves of degrees in four symmetry classes, two of degree two and two of
degree four.  As shown in \cite[Section~4]{RauhSullivant14:K3N}, lifting increases the degree by two.  Using
Lemma~\ref{lem:iterated-degree}, one can see that gluing different lifts of the same move leads to moves of degree at
most~12.  In fact, for any lift $m$ of~$g$, the tableau $\phi(m^{+})-g^{+}$ is \emph{square-free}, that is, it does not
contain two identical rows.  Therefore, for any glued lift $\tilde m$ of~$g$, the tableau $\phi(\tilde m^{+})-g^{+}$
will also be square free, and hence of degree at most~8.  Therefore, $\deg(\tilde m)\le\deg(g)+8\le 12$.
In total, we obtain the following result:
\begin{theorem}
  For any~$N$, the Markov degree of the binary hierarchical model of the complete bipartite graph $K_{3,N}$ is at most~$12$.
\end{theorem}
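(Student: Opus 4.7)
The plan is to follow the iterated toric fiber product machinery developed in Section~\ref{sec:degree-bounds}, applied to the decomposition of $K_{3,N}$ as an iterated toric fiber product of $N$ three-stars glued along their common three binary vertices. Since each three-star shares the same $K_3$ base, and the $\Acal$-grading is given by the marginalization map onto the three-variable marginal table, a Markov basis of $\Bcal_{K_{3,N},d}$ can be assembled out of (i) a Markov basis of the associated codimension-zero toric fiber product, and (ii) a glued collection of lifts of a PFI Markov basis. By Lemma~\ref{lem:iterated-degree}, the Markov degree of the glued part is controlled by $\max_{g}\bigl(\deg(g)+\deg\bigl(\max_{i}(\phi_i(m_i^+)-g^+)\bigr)\bigr)$, so it suffices to bound each of the three ingredients separately.

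First I would deal with the associated codimension-zero product. Its codimension-zero factor is the simplicial complex $\tilde K_4$ obtained by filling in the triangle on the three $K_3$-vertices, as in Proposition~\ref{prop:tfp-hierarchical}. A direct \texttt{4ti2} computation (as cited from \cite{RauhSullivant14:K3N}) produces a Markov basis of $\Bcal_{\tilde K_4}$ of degree at most $6$, and the analysis in \cite[Section~5]{RauhSullivant14:K3N} shows that all moves obtained by applying $\Glues$ to these basis elements are redundant modulo the glued lifts of the PFI Markov basis. Hence this part contributes no move of degree exceeding $12$ and in fact contributes nothing new beyond what the lifting step already yields.

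Next I would construct a PFI Markov basis. Although $\Nb\Bcal_{\tilde K_4,d}$ is not normal and in fact has infinitely many holes, the algorithm of~\cite{HemmeckeTakemureYoshida09:Computing_holes_in_semigroups} reveals that within any projected fiber the holes occur only at \emph{vertices} of the fiber and can be cut off by two specific linear forms $l_1,l_2$: for every hole $h$ of a projected fiber $\phi(\Fbf)$ there exists $i$ with $l_i(h)<\min\{l_i(u):u\in\phi(\Fbf)\}$. Appending $l_1,l_2$ to the facet inequalities of $\Rb_{\ge}\Bcal_{\tilde K_4}$ therefore yields a single integer matrix $D$ such that every projected fiber of $\xi$ in every toric fiber power is described by inequalities $Du\ge c$ (with $c$ depending on the fiber). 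As in Remark~\ref{rem:intersection-in-TF-powers}, an inequality Markov basis for $D$ is then a valid PFI Markov basis for $\bigTFP^N \Bcal$, and by the computation in \cite{RauhSullivant14:K3N} it consists of $16$ moves of degree at most~$4$.

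Finally, I would bound the lifting and gluing step. For each PFI element $g$, the lifts along the marginalization map $\phi$ from a three-star satisfy $\deg(m)\le\deg(g)+2$; more importantly, one checks directly on the combinatorial description that the table $\phi(m^+)-g^+$ is \emph{square-free}, i.e.\ contains no repeated row. Applying Lemma~\ref{lem:iterated-degree} to the $N$ lifts $m_1,\dots,m_N$ of a common $g$, the coordinate-wise maximum $\max_i(\phi_i(m_i^+)-g^+)$ remains square-free, and a square-free $0/1$-table with four binary indices has at most $2^3=8$ rows, so its degree is at most~$8$. Therefore every glued move $\tilde m$ satisfies
\begin{equation*}
  \deg(\tilde m)\;\le\;\deg(g)+\deg\!\Bigl(\max_{i}(\phi_i(m_i^+)-g^+)\Bigr)\;\le\;4+8\;=\;12,
\end{equation*}
uniformly in $N$. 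Combining this bound with the degree~$6$ bound from the codimension-zero part proves the theorem. The main obstacle I expect is the careful verification that the two extra linear forms $l_1,l_2$ genuinely separate all holes from the rest of every projected fiber (so that an ordinary inequality Markov basis does the job of a PFI Markov basis despite non-normality) and the combinatorial bookkeeping needed to show that $\phi(m^+)-g^+$ is square-free for every lift; everything after that is a mechanical application of Lemma~\ref{lem:iterated-degree}.
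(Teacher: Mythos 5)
Your proposal reproduces the paper's own argument: the same decomposition of $K_{3,N}$ as an iterated toric fiber product of binary three-stars glued along the $K_3$ separator, the same reliance on the computations from \cite{RauhSullivant14:K3N} (the 16-move inequality Markov basis of degree at most~4, the separating linear forms $l_1,l_2$ that cut off the infinitely many holes, lifting raising degree by~2, and the square-freeness of $\phi(m^+)-g^+$), and the same final application of Lemma~\ref{lem:iterated-degree} giving $\deg(\tilde m)\le 4+8=12$. One small internal slip: the tableau $\phi(m^+)-g^+$ lives over the \emph{three} binary separator vertices, not four, which is why the square-free bound is $2^3=8$ as you correctly use but not for the stated reason.
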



\section*{Acknowledgments}

Johannes Rauh was supported by the VW Foundation.
Seth Sullivant was partially supported by the David and Lucille Packard Foundation and the US National Science Foundation (DMS 0954865).
\ifarxiv\else
We thank the two anonymous reviewers for their numerous comments that pointed us to many fine technical points and to several possibilities to improve the presentation.
\fi


\bibliographystyle{elsarticle-num}
\bibliography{HighCodim}

\end{document}